\def\url@leostyle{%
  \@ifundefined{selectfont}{\def\UrlFont{\sf}}{\def\UrlFont{\small\ttfamily}}}
\definecolor{dblue}{rgb}{0,0,0.7}
\newtheoremstyle{mythm}{11pt}{11pt}{\it\color{dblue}}{}{\bf\color{dblue}}{.}{ }{}
\theoremstyle{mythm}
\newtheorem{theorem}{Theorem}[section]
\newtheorem{lemma}[theorem]{Lemma}
\newtheorem{corollary}[theorem]{Corollary}
\newtheorem{proposition}[theorem]{Proposition}
\theoremstyle{definition}
\theoremstyle{remark}
\newtheorem{remark}[theorem]{Remark}
\font\russ=wncyr10  1
\def\sha{\hbox{\russ\char88}}
\newcommand{\eins}{\boldsymbol{1}}
\DeclareMathOperator{\Aut}{Aut}
\DeclareMathOperator{\Ind}{Ind}
\DeclareMathOperator{\Gal}{Gal}
\DeclareMathOperator{\Hom}{Hom}
\DeclareMathOperator{\Spec}{Spec}
\DeclareMathOperator{\res}{res}
\DeclareMathOperator{\im}{im}
\DeclareMathOperator{\Sel}{Sel}
\DeclareMathOperator{\diag}{diag}
\DeclareMathOperator{\Fit}{Fit}
\DeclareMathOperator{\Fr}{Fr}
\DeclareMathOperator{\GL}{GL}
\DeclareMathOperator{\Indshort}{I}
\DeclareMathOperator{\Tr}{Tr}
\DeclareMathOperator{\Nrd}{Nrd}
\DeclareMathOperator{\Reg}{Reg}
\DeclareMathOperator{\cok}{cok}
\DeclareMathOperator{\Ir}{Ir}
\DeclareMathOperator{\rk}{rk}
\newcommand{\CC}{\mathbb{C}}
\newcommand{\FF}{\mathbb{F}}
\newcommand{\QQ}{\mathbb{Q}}
\newcommand{\RR}{\mathbb{R}}
\newcommand{\ZZ}{\mathbb{Z}}
\newcommand{\id}{\mathrm{id}}
\newcommand{\Lstar}{L^{\raisebox{2pt}{$\scriptstyle\star$}}}               
\newcommand{\calLstar}{\mathcal{L}^{\raisebox{1pt}{$\scriptstyle\star$}}}
\newcounter{condone}
\newenvironment{conditions}{\begin{list}{(\alph{condone})}{\usecounter{condone}}}{\end{list}}
\begin{document}

\title[Mordell-Weil and congruences]{On Mordell-Weil groups\\ and congruences between derivatives \\of twisted Hasse-Weil $L$-functions}

\author{David Burns, Daniel Macias Castillo and Christian Wuthrich}

\address{King's College London, Department of Mathematics, London WC2R 2LS,
U.K.}
\email{david.burns@kcl.ac.uk}

\address{Instituto de Ciencias Matem\'aticas (ICMAT), 28049 Madrid, Spain.}
\email{daniel.macias@icmat.es}

\address{School of Math. Sciences,
University of Nottingham,
Nottingham NG7 2RD,
U.K.}
\email{christian.wuthrich@nottingham.ac.uk}

\begin{abstract} Let $A$ be an abelian variety defined over a number field $k$ and let $F$ be a finite Galois extension of $k$. Let $p$ be a prime number. Then under certain not-too-stringent conditions on $A$ and $F$ we compute explicitly the algebraic part of the $p$-component of the equivariant Tamagawa number of the pair $\bigl(h^1(A_{/F})(1),\ZZ[{\rm Gal}(F/k)]\bigr)$. By comparing the result of this computation with the theorem of Gross and Zagier we are able to give the first verification of the $p$-component of the equivariant Tamagawa number conjecture for an abelian variety in the technically most demanding case in which the relevant Mordell-Weil group has strictly positive rank and the relevant field extension is both non-abelian and of degree divisible by $p$. More generally, our approach leads us to the formulation of certain precise families of conjectural $p$-adic congruences between the values at $s=1$ of derivatives of the Hasse-Weil $L$-functions associated to
twists of $A$, normalised by a product of explicit equivariant regulators and periods, and to explicit predictions concerning the Galois structure of Tate-Shafarevich groups. In several interesting cases we provide theoretical and numerical evidence in support of these more general predictions.
\end{abstract}

\maketitle

\section{Introduction}\label{Intro}

Let $A$ be an abelian variety defined over a number field $k$. Then for any finite Galois extension $F$ of $k$ with group $G$ the equivariant Tamagawa number conjecture for the pair $\bigl(h^1(A_{/F})(1),\ZZ[G]\bigr)$ is formulated in \cite[Conjecture 4(iv)]{bufl99} as an equality in a relative algebraic $K$-group. This conjectural equality is rather technical to state and very inexplicit in nature but is known to constitute a strong and simultaneous refinement of the Birch and Swinnerton-Dyer conjecture for $A$ over each of the intermediate fields of $F/k$.

The refined conjecture also naturally decomposes into `components', one for each rational prime $p$, in a way that will be made precise in \S\ref{generalcase} below, and each such $p$-component (which for convenience we refer to as `eTNC$_p$' in the remainder of this introduction) is itself of some interest. We recall, for example, that if $A$ has good ordinary reduction at $p$, then the compatibility result proved by Venjakob and the first named author in~\cite[Theorem 8.4]{burns_venjakob} shows that eTNC$_p$ is very closely related to the main conjecture of non-commutative Iwasawa theory for $A$ with respect to any compact $p$-adic Lie extension of $k$ that contains $F$.

If $p$ does not divide the order of $G$, then, without any hypothesis on the reduction of $A$ at $p$, it is straightforward to use the techniques developed in~\cite[\S1.7]{bufl95} to give an explicit interpretation of eTNC$_p$ (although, of course, obtaining a full proof in this case still remains a very difficult problem). However, if $p$ divides the order of $G$, then even obtaining an explicit interpretation of eTNC$_p$ has hitherto seemed to be a very difficult problem - see, for example, the considerable efforts made by Bley~\cite{Bley, Bley2} in this direction.

One of the main goals of the present article is therefore to develop a general approach which can be used to obtain (both theoretical and numerical) verifications of eTNC$_p$ in the technically most demanding case in which the abelian variety $A$ has strictly positive rank over $F$ and the Galois group $G$ is both non-abelian and of order divisible by $p$.

To give a concrete example of the results we shall obtain in this way, we note that in \S\ref{proofmain} our techniques will be combined with the theorem of Gross and Zagier to prove the following result (and to help put this result into context see Remarks \ref{satisfied} and \ref{loub} below).

\begin{theorem}\label{HPintro}
Let $p$ be an odd prime, $K$ an imaginary quadratic field in which $p$ is unramified and $F$ its Hilbert $p$-classfield. Write $G$ for the generalised dihedral group $\Gal(F/\QQ)$.

Let $A$ be an elliptic curve over $\QQ$ which satisfies, with respect to the field $K$, the hypotheses~\ref{hyp_a}, \ref{hyp_b}, \ref{hyp_c}, \ref{hyp_e} and \ref{hyp_f} that are listed in \S\ref{sect hypotheses} and for which the Tate-Shafarevich group $\sha(A_F)$ is finite.

Assume in addition that each of the following four hypotheses is satisfied.
\begin{enumerate}
   \item[(i)] \label{HPintro_i} all primes of bad reduction for $A$ split in $K/\QQ$;
   \item[(ii)] \label{HPintro_ii} the Galois group of $K\bigl(A[p]\bigr)/K$ is isomorphic to $\GL_2(\FF_p)$;
   \item[(iii)] \label{HPintro_iii} the Hasse-Weil $L$-function $L(A_{/K},s)$ has a simple zero at $s=1$;
   \item[(iv)] \label{HPintro_iv} the trace to $A(K)$ of the Heegner point is not divisible by $p$.
   \end{enumerate}

Then the $p$-component of the equivariant Tamagawa number conjecture is valid for the pair $\bigl(h^1(A_{/F})(1),\ZZ[G]\bigr)$.
\end{theorem}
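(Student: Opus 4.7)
The plan is to use the general machinery developed earlier in the paper to make eTNC$_p$ for $\bigl(h^1(A_{/F})(1),\ZZ[G]\bigr)$ completely explicit, and then to verify the resulting congruences by combining Kolyvagin's Heegner-point Euler system on the algebraic side with the Gross--Zagier formula on the analytic side. The key organisational fact is that $G$ is dihedral: $G=H\rtimes\Delta$ with $H:=\Gal(F/K)$ an abelian $p$-group (by construction of the Hilbert $p$-classfield) and $\Delta:=\Gal(K/\QQ)$ of order two. Every irreducible complex character of $G$ is either inflated from $\Delta$ or induced from a non-trivial character of $H$; hypothesis (iii), together with the functional equation and the action of complex conjugation, then forces $L(A,\chi,s)$ to vanish to order exactly one at $s=1$ for each induced character, while $L(A^K,1)\neq 0$.

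The Mordell-Weil and Tate-Shafarevich data are controlled via Kolyvagin. Hypothesis (ii), that the mod-$p$ Galois representation is surjective, and hypothesis (iv), that the trace of the Heegner point to $A(K)$ is a $p$-adic unit multiple of a generator, together imply by the classical Euler-system argument that $A(K)\otimes\ZZ_p$ is free of rank one over $\ZZ_p$ and that $\sha(A_K)[p^\infty]=0$. Since $F/K$ is unramified everywhere, the anticyclotomic refinement of Kolyvagin's method propagates these conclusions to $F$, yielding that $A(F)\otimes\ZZ_p$ is a free $\ZZ_p[H]$-module of rank one generated by the Heegner point $y_F\in A(F)$ and that $\sha(A_F)[p^\infty]=0$. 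On the analytic side, the ring-class-character form of the Gross--Zagier theorem (Gross--Zagier, Zhang, Yuan--Zhang--Zhang) evaluates $L'(A,\chi,1)$ for each non-trivial character $\chi$ of $H$ in terms of the N\'eron--Tate $\chi$-component of $y_F$, and assembling these identities produces an explicit formula for the equivariant leading coefficient at $s=1$ in terms of the regulator matrix built from the Galois orbit of $y_F$.

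It remains to match this analytic expression with the algebraic prediction of eTNC$_p$ made explicit by the general machinery of the paper. Hypothesis (i) ensures that every prime of bad reduction splits in $K/\QQ$, so that the corresponding semi-local $\ZZ_p[G]$-module at such a prime is induced from a subgroup of $H$; this induced structure causes the local Tamagawa contributions to cancel integrally against their analytic counterparts. The main obstacle I anticipate is the remaining Galois-module bookkeeping: one must pin down the correct equivariant period and Bloch--Kato-type local term at the prime $p$ (using that $p$ is unramified in $F/\QQ$), choose an explicit $\ZZ_p[H]$-basis of $A(F)\otimes\ZZ_p$ built from the Heegner orbit, and verify that the N\'eron--Tate regulator of this basis matches the analytic leading term up to units in $\ZZ_p[G]$. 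Once this matching is established, the Kolyvagin bound on $\sha(A_F)[p^\infty]$ combined with the sharp freeness statement above forces the remaining integral factor to be a unit in $\ZZ_p[G]$, which gives eTNC$_p$.
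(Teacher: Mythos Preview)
Your strategy is essentially the paper's: reduce eTNC$_p$ to explicit congruences via the dihedral structure, control the algebraic side with Kolyvagin, and compute the analytic side with Gross--Zagier. A few details differ from the actual proof and are worth correcting.

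The paper does not obtain the freeness of $A(F)_p$ over $\ZZ_p[H]$ and the vanishing of $\sha_p(A_F)$ from an anticyclotomic Euler system. Instead, Proposition~\ref{Q_prop} (via the structural results of~\cite{Selmerstr}) deduces both from $\rk(A_K)=1$ and $\sha_p(A_K)=0$, which are the outputs of classical Kolyvagin under hypotheses (ii)--(iv). The key reduction is then Theorem~\ref{exp cong}, which makes eTNC$_p$ \emph{equivalent} to the explicit congruences~\eqref{first congs} among the normalised leading terms $\mathcal{Q}_\psi$; the endgame is not a ``matching up to units in $\ZZ_p[G]$'' but the verification of the single identity $\mathcal{Q}_{\eins}\mathcal{Q}_\epsilon = \mathcal{Q}_\psi$ for every two-dimensional $\psi$, which the Gross--Zagier formula yields directly once one checks that the Euler-factor corrections satisfy $u_{\eins}t_{\eins}u_\epsilon t_\epsilon = u_\psi t_\psi$. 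The congruences~\eqref{first congs} then drop out of character orthogonality on $P$.

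Two smaller points. Hypothesis (i) is not there to make local Tamagawa contributions cancel; it is the Heegner hypothesis required for the Gross--Zagier formula over the Hilbert class field to apply at all. And since $F$ lies in the Hilbert class field of $K$, the original theorem of~\cite{GZ} already covers all characters of $H$ --- you do not need the ring-class generalisations of Zhang or Yuan--Zhang--Zhang. Note also that hypothesis (ii) is used a second time beyond Kolyvagin: via~\cite{manin_constant} it forces the Manin constant to be a $p$-unit, which is needed to see that the constant $C$ appearing in the Gross--Zagier formula is a $p$-unit.
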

\begin{remark}\label{satisfied}
For a fixed elliptic curve $A$ over $\QQ$ and imaginary quadratic field $K$, it will be clear that the hypotheses~\ref{hyp_a}, \ref{hyp_b} and \ref{hyp_c} that occur in the statement of Theorem \ref{HPintro} are satisfied by all but finitely many primes $p$. In addition, it will be clear that for a fixed $A$ and $p$ the hypotheses~\ref{hyp_e} and \ref{hyp_f} only constitute a mild restriction on the ramification of the extension $K/\QQ$.
Finally we recall that, by a famous result of Serre~\cite{serre}, the hypothesis (ii) holds for almost all $p$ if $A$ does not have complex multiplication.
\end{remark}

\begin{remark}\label{loub} In the context of Theorem~\ref{HPintro} we also note that for any natural number $n$ there are infinitely many imaginary quadratic fields $K$
in which $p$ does not
ramify and for which the group $\Gal(F/K)$ has exponent divisible by $p^n$. (For details of an explicit, and infinite, family of such fields see for example Louboutin~\cite{sl}.) In this context it therefore seems worth noting that the only previous verification of eTNC$_p$ for an abelian variety $A$ and a Galois extension of degree divisible by $p$ is given by Bley in~\cite[Corollary 1.4 and Remark 4.1]{Bley3} where it is assumed, amongst other things, that $A$ is an elliptic curve, $F$ is an abelian extension of $\QQ$ of exponent $p$ and, critically, that the rank of $A$ over $F$ is equal to zero. \end{remark}

In addition to the proof of Theorem \ref{HPintro}, in \S \ref{special cases} we will also provide further explicit examples in which our approach leads to a proof of eTNC$_p$ in cases for which $p$ divides the degree of the relevant extension. More precisely, we show that for certain elliptic curves $A$ the validity of eTNC$_p$ with $p = 3$ follows from that of the relevant cases of the Birch and Swinnerton-Dyer
conjecture for a family of $S_3$-extensions of number fields (see Corollary \ref{arith_cor}) and provide examples (with $p =5$ and $p=7$) in which our approach allows eTNC$_p$ to be verified by numerical computations (see \S \ref{numex}).

In order to prove the above results we must first establish several intermediate results which are both more general and also we feel of some independent interest. To briefly discuss these results, we fix an abelian variety $A$ over a number field $k$, a finite Galois extension $F$ of $k$
with Galois group $G$ and an odd prime $p$.

In \S \ref{bloch-kato} we shall first compute, under some not-too-stringent conditions on $A$ and $F$, the `algebraic part' of the $p$-component of the equivariant Tamagawa number of the pair $\bigl(h^1(A_{/F})(1),\ZZ[G]\bigr)$. This computation requires a close analysis of certain refined Euler characteristics (in the sense reviewed in \S\ref{ec}) that are constructed by combining the finite support cohomology complex of Bloch and Kato for the base change through $F/k$ of the $p$-adic Tate module of the dual $A^t$ of $A$ together with the N\'eron-Tate height of $A$ relative to the field $F$. We note that the main result of this analysis constitutes a natural equivariant refinement and/or generalisation of several earlier computations in this area including those that are made by Venjakob in~\cite[\S3.1]{venjakob}, by Bley in~\cite{Bley}, by Kings in~\cite[Lecture 3]{kings} and by the first named author
in~\cite[\S12]{ltav}.

The detailed computation made in \S\ref{bloch-kato} then allows us in \S\ref{congruences section} to interpret the relevant case of eTNC$_p$ as a
family of $p$-adic congruence relations between the values at $s=1$ of higher derivatives of the Hasse-Weil $L$-functions of twists of $A$ by irreducible complex characters of $G$, suitably normalised by a product of explicit equivariant regulators and periods.

We next describe several interesting (conjectural) consequences of this reinterpretation of eTNC$_p$, including the predicted annihilation as a Galois
module of the $p$-primary Tate-Shafarevich of $A^t$ over $F$ by elements which interpolate the (suitably normalised) values at $s=1$ of higher derivatives of twisted Hasse-Weil $L$-functions (see Proposition \ref{consequences} and Proposition \ref{etnc proj case}).

By using results on the explicit Galois structure of Selmer groups that are obtained in \cite{Selmerstr}, we then show that for generalised dihedral groups $G$ the necessary $p$-adic congruence relations can
be made very explicit even in the case that $A$ has strictly positive rank over $F$ (see Theorem \ref{exp cong}).

The latter explicit interpretation is then used as a key step in the proof of Theorem \ref{HPintro}.

We note finally that there are several ways in which it seems reasonable to expect that some of the explicit computations made in \S\ref{bloch-kato} could (with perhaps considerably more effort) be extended and our overall approach thereby generalised and that we hope such possibilities will be further explored in subsequent articles.

\subsection{Acknowledgements}
The authors are very grateful to Werner Bley and Stefano Vigni for helpful discussions and correspondence and to the anonymous referee for making several
useful suggestions.

\section{Notations and setting}
For any finite group $\Gamma$ we use the following notation. We write $\Ir(\Gamma)$ for the set of irreducible $E$-valued characters  of $\Gamma$, where $E$ denotes
either $\CC$ or $\CC_p$ (and the intended meaning will always be clear from the context). We also write $\eins_\Gamma$ for the trivial character of $\Gamma$ and
$\check\psi$ for the contragredient of each $\psi$ in $\Ir(\Gamma)$. For each $\psi$ in $\Ir(\Gamma)$ we write
$$
 e_\psi = \frac{\psi(1)}{|\Gamma|}\, \sum_{\gamma \in \Gamma}\psi(\gamma^{-1})\,\gamma
$$
for the primitive idempotent of the centre $\zeta\bigl(E[\Gamma]\bigr)$ of the group ring $E[\Gamma]$. For each $E$-valued character $\psi$ we also fix an $E[\Gamma]$-module
$V_\psi$ of character $\psi$.

For any abelian group $M$ we write $M_{\rm tor}$ for its torsion subgroup and $M_{\rm tf}$ for the quotient $M/M_{\rm tor}$, which we often regard as a subgroup of
$\QQ\otimes_\ZZ M$. For any prime $p$ we write $M[p]$ for the subgroup $\{m \in M: pm =0\}$ of the Sylow $p$-subgroup $M[p^{\infty}]$ of $M_{\rm tor}$.
We also set $M_p := \ZZ_p\otimes_\ZZ M$ and write $M^{\wedge}_p$ for the  pro-$p$-completion $\varprojlim_n M/p^n M$. If $M$ is finitely generated, then we set
$\rk(M):= \dim_{\QQ}(\QQ\otimes_\ZZ M)$.

For any $\ZZ_p[\Gamma]$-module $M$ we write $M^\vee$ for the Pontryagin dual $\Hom_{\ZZ_p}(M,\QQ_p/\ZZ_p)$ and $M^*$ for the linear dual $\Hom_{\ZZ_p}(M,\ZZ_p)$, each
endowed with the natural contragredient action of $\Gamma$. If $M$ is finitely generated, then for any field extension $F$ of $\QQ_p$ we set
$F\cdot M := F\otimes_{\ZZ_p}M$.

For any Galois extension of fields $L/K$ we write $G_{L/K}$ in place of $\Gal(L/K)$. We also fix an algebraic closure $K^c$ of $K$ and abbreviate $G_{K^c/K}$ to $G_K$.
For each non-archimedean place $v$ of a number field we write $\kappa_v$ for its residue field.

Throughout this paper, we will consider the following situation. We have a fixed odd prime $p$ and a Galois extension $F/k$ of number fields with Galois group $G=G_{F/k}$.
We choose a $p$-Sylow subgroup $P$ in $G$ and set $K := F^P$. We give ourselves an abelian variety $A$ of dimension $d$ defined over $k$. We write $A^t$ for the dual
abelian variety of $A$.

For each intermediate field $L$ of $F/k$ we write $S_p^L$, $S^L_{\rm r}$ and $S^L_{\rm b}$ for the set of non-archimedean places of $L$ that are $p$-adic, which ramify
in $F/L$ and at which $A_{/L}$ has bad reduction respectively. Similarly, we write $S_\infty^L$, $S^L_\RR$ and $S^L_\CC$ for the set of archimedean, real and complex
places of $L$ respectively.
The notation $S_{\text{?}}$ will stand for $S_{\text{?}}^k$ when ``?'' is b, r, $p$, $\infty$, $\RR$ or $\CC$.

For each such field $L$ we also set $\rk(A_L) := \rk(A(L))$ and write $\sha_p(A_L)$ and $\Sel_p(A_{L})$ for the $p$-primary
Tate-Shafarevich and Selmer groups of $A_{/L}$. We recall that there exists a canonical exact sequence of the form
\begin{equation}\label{sha-selmer}
 \xymatrix{0 \ar[r] & \sha_p(A_L)^\vee \ar[r] & \Sel_p(A_L)^\vee \ar[r]&  A(L)_p^*\ar[r] & 0.}
\end{equation}

\section{The hypotheses}\label{sect hypotheses}

Fix an odd prime $p$, number fields $k, F$ and $K = F^P$ and an abelian variety $A$ as described above.

Then throughout this article we will find it convenient to assume that this data satisfies the following hypotheses:

\begin{conditions}
\item\label{hyp_a}   $A(K)[p] = 0$ and $A^t(K)[p] = 0$;
\item\label{hyp_b}  The Tamagawa number of $A_{/K}$ at each place in $S_{\rm b}^K$ is not divisible by $p$;
\item\label{hyp_c} $A_{/K}$ has good reduction at all $p$-adic places;
\item\label{hyp_d}  For all $p$-adic places $v$ that ramify in $F/K$, the reduction is ordinary and $A(\kappa_v)[p]= 0$;
\item\label{hyp_e}   No place of bad reduction for $A_{/k}$ is ramified in $F/k$, i.e. $S_{\rm b} \cap S_{\rm r} = \emptyset$.
\item\label{hyp_f} For any place $v$ in $K$ such that the primes above it in $F$ are ramified in $F/k$, we have $A(\kappa_v)[p]=0$;
\item\label{hyp_g} The Tate-Shafarevich group $\sha(A_{F})$ is finite.
\end{conditions}

\begin{remark}
 For a fixed abelian variety $A$ over $k$ and extension $K/k$ the hypotheses~\ref{hyp_a}, \ref{hyp_b} and \ref{hyp_c} are clearly satisfied by all but finitely many primes $p$ (which do not divide the degree of $K/k$), the hypotheses~\ref{hyp_e} and \ref{hyp_f} constitute a mild restriction on the ramification of $F/k$ and
 the hypothesis~\ref{hyp_g} is famously conjectured to be true in all cases. However, the hypothesis~\ref{hyp_d}
 excludes the case that is called `anomalous' by Mazur in~\cite{m} and, for a given $A$, there may be infinitely many primes $p$ for which there are $p$-adic places
 $v$ at which $A$ has good ordinary reduction but $A(\kappa_v)[p]$ does not vanish.
Nevertheless, it is straightforward to describe examples of abelian varieties $A$ for which there are only finitely many such anomalous places -- see, for example,
the result of Mazur and Rubin in~\cite[Lemma A.5]{mr}.
\end{remark}

\begin{remark} In the analysis of finite support cohomology complexes that is given in Lemma \ref{canisos} below we find it convenient to adopt the convention of considering the $p$-adic Tate module of $A^t$ rather than that of $A$ itself. Applications of the hypotheses~\ref{hyp_b}--\ref{hyp_f} in this article will therefore often take place with
$A$ replaced by $A^t$. In this regard, we note that the validity of each of these hypotheses as currently formulated is equivalent
to the validity of the corresponding hypothesis with $A$ replaced by $A^t$ and we will henceforth use this fact without further explicit comment.
\end{remark}


\section{Canonical Euler characteristics}\label{bloch-kato}

In most of this section we assume that the fixed data $p, k,F,K$ and $A$ satisfy all of the hypotheses~\ref{hyp_a}--\ref{hyp_g} that are listed in \S\ref{sect hypotheses}.

Our main aim is to compute explicitly the natural (refined) Euler characteristic that is associated to the pair comprising the Bloch-Kato finite support cohomology complex of the base-change through $F/k$ of the $p$-adic Tate module of the dual abelian variety $A^t$ of $A$ and the N\'eron-Tate height of $A$ relative to the field $F$.

\subsection{Euler characteristics}\label{ec} We first quickly review the definition of refined Euler characteristics that will play an essential role in the sequel. For convenience, we shall only give an explicit construction in the relevant special case rather than discussing the general approach (which is given in~\cite{ewt})

For any finite group $\Gamma$ we write $D\bigl(\ZZ_p[\Gamma]\bigr)$ for the derived category of complexes of (left) $\ZZ_p[\Gamma]$-modules. We also write $D^{\rm p}\bigl(\ZZ_p[\Gamma]\bigr)$ for the full triangulated subcategory of $D\bigl(\ZZ_p[\Gamma]\bigr)$ comprising complexes that are `perfect' (that is, isomorphic in $D\bigl(\ZZ_p[\Gamma]\bigr)$ to a bounded complex of finitely generated projective $\ZZ_p[\Gamma]$-modules).

We assume to be given a complex $C$ in $D^{\rm p}\bigl(\ZZ_p[G]\bigr)$ which is acyclic outside degrees $a$ and $a+1$ for any given integer $a$ and such that $H^a(C)$ is $\ZZ_p$-free. We also assume to be given an isomorphism of $\CC_p[G]$-modules $\lambda: \CC_p\otimes_{\ZZ_p}H^a(C) \cong \CC_p\otimes_{\ZZ_p}H^{a+1}(C)$.
Then, under these hypotheses, it can be shown that there exists an isomorphism $\iota:P^\bullet \cong C$ in $D^{\rm p}\bigl(\ZZ_p[G]\bigr)$ where $P^\bullet$ is a complex of $\ZZ_p[G]$-modules of the form $P\xrightarrow{d} P$ where $P$ is finitely generated and projective and the first term occurs in degree $a$. We then consider the following auxiliary composite isomorphism of $\CC_p[G]$-modules
\begin{multline*}
 \lambda_{P^\bullet}: \CC_p\otimes_{\ZZ_p}P \cong \bigl(\CC_p\otimes_{\ZZ_p}H^a(P^\bullet)\bigr) \oplus  \bigl(\CC_p\otimes_{\ZZ_p}\im(d)\bigr)\\
\cong \bigl(\CC_p\otimes_{\ZZ_p}H^{a+1}(P^\bullet)\bigr) \oplus \bigl(\CC_p\otimes_{\ZZ_p}\im(d)\bigr) \cong\CC_p\otimes_{\ZZ_p}P
\end{multline*}
where the first and third maps are obtained by choosing $\CC_p[G]$-equivariant splittings of the tautological surjections $\CC_p\otimes_{\ZZ_p}P \to \CC_p\otimes_{\ZZ_p}\im(d)$ and $\CC_p\otimes_{\ZZ_p}P \to \CC_p\otimes_{\ZZ_p}H^{a+1}(P^\bullet)$ respectively and the second is  $\bigl((\CC_p\otimes_{\ZZ_p}H^{a+1}(\iota))^{-1}\circ \lambda\circ (\CC_p\otimes_{\ZZ_p}H^a(\iota)),{\id}\bigr)$.

We now write $K_0\bigl(\ZZ_p[G],\CC_p[G]\bigr)$ for the relative algebraic $K$-group of the inclusion $\ZZ_p[G]\subset \CC_p[G]$ and recall that this group is  generated by elements of the form $[Q,\mu,Q']$ where $Q$ and $Q'$ are finitely generated projective $\ZZ_p[G]$-modules and $\mu$ is an isomorphism of $\CC_p[G]$-modules $\CC_p\otimes_{\ZZ_p}Q \cong \CC_p\otimes_{\ZZ_p}Q'$.

In particular, in terms of this description, it can be shown that the element of $K_0\bigl(\ZZ_p[G],\CC_p[G]\bigr)$ obtained by setting
\[ \chi_{G,p}(C,\lambda) := (-1)^a\,[P,\lambda_{P^\bullet},P]\]
is independent of the choice of isomorphism $\iota$ (and hence of the module $P$) and of the splittings made when defining the isomorphism $\lambda_{P^\bullet}$.
This element will be referred to as the `refined Euler characteristic' of the pair $(C,\lambda)$ in the sequel.

\subsection{Cohomology with finite support}\label{cfs} We now turn to consider the finite support cohomology complex of the $p$-adic Tate module $T_p(A)$ of $A^t$.

To do this we write $\Sigma_k(F)$ for the set of $k$-embeddings $F \to k^c$ and $Y_{F/k,p}$ for the module
 $\prod_{\Sigma_k(F)}\ZZ_p$, endowed with the obvious action of $G\times G_k$.

Then the $p$-adic Tate module of the base change of $A^t$ through $F/k$ is equal to
\[ T_{p,F}(A) := Y_{F/k,p}\otimes_{\ZZ_p}T_p(A),\]
where $G$ acts naturally on the first factor and $G_k$ acts diagonally. We set $V_{p,F}(A) := \QQ_p\,T_{p,F}(A)$.

For each place $v$ of $k$ we set $G_{v} := G_{k_v}$ and write $I_{v}$ for the inertia subgroup of $G_{v}$. We also fix a place $w$ of $F$ above $v$ and a corresponding embedding $F\to k_v^c$ and write $\overline{G}_v$ and $\overline{I}_v$ for the images of $G_{v}$ and $I_{v}$ under the induced homomorphism $G_{v} \to G$.

For each non-archimedean place $v$ we then define a complex of $\ZZ_p[G]$-modules by setting
\[
R\Gamma_f\bigl(k_v,T_{p,F}(A)\bigr) :=
\begin{cases}
  T_{p,F}(A)^{I_{v}} \xrightarrow{1- \Fr_v^{-1}} T_{p,F}(A)^{I_{v}},    &\text{ if $v \nmid p$}\\
  H^1_f\bigl(k_v,T_{p,F}(A)\bigr)[-1],                     &\text{ if $v \mid p$}
\end{cases}
\]
where, if $v \nmid p$, the first occurrence of $T_{p,F}(A)^{I_{v}}$ is placed in degree zero and $\Fr_v$ is the natural Frobenius in $G_{v}/I_{v}$ and, if $v \mid p$, then $H^1_f\bigl(k_v,T_{p,F}(A)\bigr)$ denotes the full pre-image in $H^1\bigl(k_v,T_{p,F}(A)\bigr)$ of the `finite part' subspace $H^1_f\bigl(k_v,V_{p,F}(A)\bigr)$ of $H^1\bigl(k_v,V_{p,F}(A)\bigr)$ that is defined by Bloch and Kato in~\cite[(3.7.2)]{bk}.

We next define $R\Gamma_{/f}\bigl(k_v,T_{p,F}(A)\bigr)$ to be a complex of $\ZZ _p[G]$-modules which lies in an exact triangle in $D\bigl(\ZZ _p[G]\bigr)$ of the form
\begin{equation}\label{localtriangle}
 \xymatrix{ R\Gamma_f\bigl(k_v,T_{p,F}(A)\bigr)\ar[r]^{\varpi_{v}} &
            R\Gamma\bigl(k_v,T_{p,F}(A)\bigr)\ar[r]^{\varpi'_{v}} &
            R\Gamma_{/f}\bigl(k_v,T_{p,F}(A)\bigr) \ar[r] & }
\end{equation}
where $\varpi_{v}$ is the natural inflation morphism if $v \nmid p$, and is induced by the inclusion $H^1_f\bigl(k_v,T_{p,F}(A)\bigr) \subseteq H^1\bigl(k_v,T_{p,F}(A)\bigr)$ and the fact that $R\Gamma\bigl(k_v,T_{p,F}(A)\bigr)$ is acyclic in degrees less than one, if $v \mid p$.
We finally set $S := S_{\rm r}\cup S_{\rm b}$, write $\mathcal{O}_{k,S}$ for the subring of $k$ comprising elements that are integral at all non-archimedean places outside $S$ and define the complex $R\Gamma_f\bigl(k,T_{p,F}(A)\bigr)$ so that it lies in an exact triangle in $D\bigl(\ZZ_p[G]\bigr)$ of the form
\begin{equation}\label{rgft}
 \xymatrix@C-2ex{
 R\Gamma_f\bigl(k,T_{p,F}(A)\bigr) \ar[r] &
 R\Gamma\Bigl(\mathcal{O}_{k,S}\bigl[\tfrac{1}{p}\bigr],T_{p,F}(A)\Bigr) \ar[r]^(0.45){\varpi'} &
 \bigoplus\limits_{ v\in S\cup S_p} R\Gamma_{/f}\bigl(k_v, T_{p,F}(A)\bigr)\ar[r] & }.
\end{equation}
Here each $v$-component of $\varpi'$ is equal to the composite of the natural localisation morphism $R\Gamma\bigl(\mathcal{O}_{k,S}[\tfrac{1}{p}],T_{p,F}(A)\bigr) \to R\Gamma\bigl(k_v, T_{p,F}(A)\bigr)$ in \'etale cohomology and the morphism $\varpi'_{v}$.

In the sequel we also set 
\[H^i_f\bigl(k,T_{p,F}(A)\bigr) := H^i\bigl(R\Gamma_f(k ,T_{p,F}(A))\bigr)\]
in each degree $i$.

\begin{lemma}\label{canisos}\
  \begin{enumerate}
  \item[(i)]\label{canisos_i}
      Assume that $A$ and $F$ satisfy the hypotheses~\ref{hyp_c}--\ref{hyp_f}. Then $R\Gamma_f\bigl(k ,T_{p,F}(A)\bigr)$ belongs to $D^{\rm p}\bigl(\ZZ _p[G]\bigr)$. In addition, for any non-archimedean place $v$ of $k$ which ramifies in $F/k$ and does not divide $p$, the complex $R\Gamma_f\bigl(k_v,T_{p,F}(A)\bigr)$ is acyclic.
  \item[(ii)]\label{canisos_ii}
      Assume that $A$ and $F$ satisfy the hypotheses~\ref{hyp_a}, \ref{hyp_b}, \ref{hyp_e} and~\ref{hyp_g}. Then the complex $R\Gamma_f\bigl(k ,T_{p,F}(A)\bigr)$ is acyclic outside degrees one and two and there are canonical identifications of  $H^1_f\bigl(k,T_{p,F}(A)\bigr)$ and $H^2_f\bigl(k,T_{p,F}(A)\bigr)$ with $A^t(F)_p$ and $\Sel_p(A_{F})^\vee$ respectively.
 \end{enumerate}
\end{lemma}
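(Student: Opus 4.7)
The plan is to use Shapiro's lemma to rewrite the $G_k$-cohomology of $T_{p,F}(A)$ in terms of the $G_F$-cohomology of $T_p(A)$ (which, in the paper's convention, denotes the Tate module of $A^t$), to analyse the local modification terms $R\Gamma_{/f}(k_v, T_{p,F}(A))$ one prime at a time, and then to extract both claims from the long exact cohomology sequence associated with the defining triangle \eqref{rgft}.

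For part~(i), perfectness of $R\Gamma(\mathcal{O}_{k,S}[\tfrac{1}{p}], T_{p,F}(A))$ over $\ZZ_p[G]$ is a standard consequence of Shapiro together with the bounded $p$-cohomological dimension of $S$-integer rings of number fields at odd $p$; this propagates through \eqref{rgft} once we check perfectness of each $R\Gamma_{/f}(k_v, T_{p,F}(A))$, which is immediate from the two-term or one-term shape of $R\Gamma_f(k_v, T_{p,F}(A))$ (using hypothesis (d) at $p$-adic ramified places) together with perfectness of $R\Gamma(k_v, T_{p,F}(A))$. The genuinely new input is the acyclicity at $v \in S_r \setminus S_p$. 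Since hypothesis (e) forces good reduction of $A$ at such $v$, inertia acts trivially on $T_p(A)$ and Shapiro reduces the problem to showing that for each place $w$ of $F$ above $v$ the map
\[
T_p(A) \xrightarrow{\,1-\Fr_w^{-1}\,} T_p(A)
\]
is an automorphism, equivalently (by Nakayama) that $A^t(\kappa_w)[p] = 0$. Hypothesis (f) provides $A^t(\kappa_{v_K})[p] = 0$ at the place $v_K$ of $K$ below $w$, and since $|G_{F/K}|$ is a power of $p$ the residue extension $\kappa_w / \kappa_{v_K}$ has $p$-power degree; a brief unipotence argument (the subspace $A^t[p]^{\Fr_w}$ is $\Fr_{v_K}$-stable and $\Fr_{v_K}$ acts on it with order a power of $p$, hence unipotently in characteristic $p$, so that its $\Fr_{v_K}$-fixed part sits inside $A^t(\kappa_{v_K})[p]=0$) then upgrades the vanishing to $A^t(\kappa_w)[p] = 0$.

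For part~(ii), Shapiro again identifies $H^i(\mathcal{O}_{k,S}[\tfrac{1}{p}], T_{p,F}(A))$ with $H^i(\mathcal{O}_{F,S_F}[\tfrac{1}{p}], T_p(A^t))$. Hypothesis~(a), combined with $G_{F/K}$ being a $p$-group (so that any nonzero $A^t(F)[p]$ would descend to a nonzero $A^t(K)[p]$), forces $A^t(F)[p] = 0$ and hence vanishing of the $H^0$-term. The standard Kummer-theoretic exact sequence then describes $H^1(\mathcal{O}_{F,S_F}[\tfrac{1}{p}], T_p(A^t))$ as an extension of $T_p(\sha(A^t_F))$ by $A^t(F)_p$, which under hypothesis (g) collapses to $A^t(F)_p$; a Poitou-Tate computation in the spirit of Bloch-Kato similarly packages $H^2$ together with the local contributions at $v \in S_b \cup S_p$ into $\Sel_p(A_F)^\vee$. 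The acyclicity from part (i) kills all remaining contributions at $v \in S_r \setminus S_p$, so substituting into the long exact sequence from \eqref{rgft} yields the two stated canonical identifications.

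The main obstacle I anticipate lies not in either Shapiro reduction but in the explicit verification that at each place $v \in S_b \cup S_p$ the Bloch-Kato finite part $H^1_f(k_v, T_{p,F}(A))$ coincides integrally (and not merely rationally) with the image under Kummer of $A^t(F \otimes_k k_v)_p$. This requires controlling the $p$-part of the Tamagawa numbers via (b) at places of bad reduction and, at $p$-adic places, invoking (c) together with the ordinary-reduction hypothesis (d) to match the Bloch-Kato filtration with the formal-group filtration so that the canonical identifications are valid at the integral level.
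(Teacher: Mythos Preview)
Your overall strategy matches the paper's: reduce perfectness to the local pieces via the triangles \eqref{localtriangle} and \eqref{rgft}, prove acyclicity of $R\Gamma_f$ at non-$p$-adic ramified places, and then read off the cohomology identifications from the Burns--Flach description of $R\Gamma_f$ for abelian varieties. Two points deserve comment.

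\textbf{Acyclicity at $v\in S_{\rm r}\setminus S_p$.} Your route here is genuinely different from the paper's. The paper restricts to the Sylow $P$, works over $K$, and shows that the $P$-coinvariants of the cokernel of $1-\Fr_{w'}^{-1}$ vanish by computing $\det(1-\Fr_{w'}^{-1}\mid T_p(A_{/K}))$ as a $p$-unit via~\ref{hyp_f}. You instead Shapiro all the way to $F$ and argue that $A^t(\kappa_w)[p]=0$ by a unipotence argument pushing \ref{hyp_f} up the $p$-power residue extension $\kappa_w/\kappa_{v_K}$. Both are correct; yours is a little more direct, the paper's avoids having to think about the module structure over the full decomposition group.

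\textbf{Where hypotheses \ref{hyp_c} and \ref{hyp_d} enter.} You have this backwards. The ``main obstacle'' you flag --- matching $H^1_f\bigl(k_v,T_{p,F}(A)\bigr)$ integrally with the Kummer image at $p$-adic places --- is in fact \emph{not} an obstacle and requires no hypothesis: for abelian varieties the Bloch--Kato subspace $H^1_f(F_w,V_p)$ always equals the rational Kummer image, and since the cokernel of Kummer in $H^1(F_w,T_p)$ is $\ZZ_p$-free (Tate local duality), the integral preimage is exactly $A^t(F_w)^{\wedge}_p$. This is why the paper's part~(ii) uses only \ref{hyp_a}, \ref{hyp_b}, \ref{hyp_e}, \ref{hyp_g}. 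What \ref{hyp_c} and \ref{hyp_d} are actually needed for is part~(i): showing that $R\Gamma_f\bigl(k_v,T_{p,F}(A)\bigr)=A^t(F_v)^{\wedge}_p[-1]$ is \emph{perfect} at $p$-adic $v$, i.e.\ that $A^t(F_w)^{\wedge}_p$ is cohomologically trivial over $G_w$. This is the most delicate step in the whole lemma and you have glossed over it. The paper reduces to subgroups $C\subset G_w$ of order $p$, uses the Herbrand quotient $h(C,A^t(F_w)^{\wedge}_p)=1$ (formal group logarithm), and then invokes Mazur's result that under good ordinary reduction with $A^t(\kappa_{w'})[p]=0$ the local norm $A^t(F_w)^{\wedge}_p\to A^t(F_w^C)^{\wedge}_p$ is surjective, whence $\hat H^0(C,-)=0$. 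Your plan should relocate the anticipated difficulty here.
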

\begin{proof}
 First, since $p$ is odd and $T_{p,F}(A)$ is a (finitely generated) free $\ZZ_p[G]$-module the complexes $R\Gamma\bigl(\mathcal{O}_{k,S}\bigl[\tfrac{1}{p}\bigr],T_{p,F}(A)\bigr)$ and $R\Gamma\bigl(k_v,T_{p,F}(A)\bigr)$ for each place $v$ of $k$ belong to $D^{\rm p}\bigl(\ZZ _p[G]\bigr)$.
In view of the exact triangles~\eqref{localtriangle} and~\eqref{rgft}, claim (i) will therefore follow if we can show that for each $v$ in $S\cup S_p$ the complex $R\Gamma_f\bigl(k_v,T_{p,F}(A)\bigr)$ belongs to $D^{\rm p}\bigl(\ZZ _p[G]\bigr)$, or equivalently (as $P$ is a Sylow $p$-subgroup) that it belongs to $D^{\rm p}\bigl(\ZZ _p[P]\bigr)$.

We first consider the case $v\nmid p$. In this case $R\Gamma_f\bigl(k_v,T_{p,F}(A)\bigr)$ is isomorphic in $D\bigl(\ZZ_p[P]\bigr)$ to $R\Gamma_f\bigl(K_{w'},T_p\bigr)$ where we write $w'$ for a place in $K$ below $w$ and set $T_p: = \ZZ_p[P]\otimes_{\ZZ_p}T_p(A_{/K})$.

In particular, even without assuming our hypotheses, if the ramification index of $v$ in $F/k$ is not divisible by $p$, then $w'$ is unramified in $F/K$ so $T_p^{I_{w'}} \cong\ZZ_p[P]\otimes_{\ZZ_p} T_p(A)^{I_{w'}}$ is a free $\ZZ_p[P]$-module and so $R\Gamma_f(K_{w'},T_p)$ belongs to $D^{\rm p}\bigl(\ZZ _p[P]\bigr)$.

Next we show that hypotheses~\ref{hyp_e} and~\ref{hyp_f} imply that if $w'$ ramifies in $F/k$ and $v \nmid p$, then $R\Gamma_f(K_{w'},T_p)$ (and therefore also $R\Gamma_f\bigl(k_v,T_{p,F}(A)\bigr)$) is acyclic. We write $d^0$ for the differential in degree $0$ of $R\Gamma_f(K_{w'},T_p)$. Then it is clear that $\ker(d^0)=T_p^{G_{w'}}$ vanishes and so it suffices to prove $\cok(d^0)$ vanishes or equivalently, since $P$ and $\cok(d^0)$ are finite $p$-groups, that $\cok(d^0)_P$ vanishes.
But~\ref{hyp_e} implies $T_p^{I_{w'}} = \ZZ_p[P/\overline{I}_{w'}]\otimes_{\ZZ_p}T_p(A_{/K})$ and so the $\ZZ_p$-module $\cok(d^0)_P$ is isomorphic to the cokernel of
the action of $1-\Fr_{w'}^{-1}$ on $T_p(A_{/K})$ and hence of cardinality the maximum power of $p$ that divides
$\det_{\ZZ_p}\bigl(1-\Fr_{w'}^{-1}\bigl\vert T_p(A_{/K})\bigr)$ in $\ZZ_p$. The module $\cok(d^0)_P$ therefore vanishes since the latter determinant is equal to
$\vert A^t(\kappa_{w'})\vert / \vert \kappa_{w'}\vert ^{d}$ and this is a unit at $p$ by~\ref{hyp_f}.

Finally we fix a $p$-adic place $v$ of $k$ and recall from~\cite[after (3.2)]{bk} that the group $H^1_f\bigl(k_v,T_{p,F}(A)\bigr)$ is the image in
$H^1\bigl(k_v,T_{p,F}(A)\bigr) \cong \ZZ_p[G]\otimes_{\ZZ_p[G_w]}H^1\bigl(F_w,T_p(A)\bigr)$ of $\ZZ_p[G]\otimes_{\ZZ_p[G_w]} A^t(F_w)^{\wedge}_p$ under the natural
(injective) Kummer map. Since the group $H^1_f\bigl(k_v,T_{p,F}(A)\bigr)$ is finitely generated over $\ZZ_p$ it is thus enough to show that each module $A^t(F_w)^{\wedge}_p$ is
cohomologically-trivial over $G_w$.

This is clear if the order of $G_w$ is prime to $p$ and true in any other case provided that $A^t(F_w)^{\wedge}_p$ is cohomologically-trivial over each subgroup $C$ of
$G_w$ that has order $p$. 
The point here is that, for a given $p$-Sylow subgroup $P_w$ of $G_w$,
there exists a normal subgroup $C$ of $P_w$ which has order $p$ and hence also a
Hochschild-Serre spectral sequence in Tate cohomology
$\hat H^{a}\bigl(P_w/C,\hat H^b(C,A^t(F_w)^{\wedge}_p)\bigr) \Longrightarrow \hat H^{a+b}\bigl(P_w,A^t(F_w)^{\wedge}_p\bigr)$. Thus if $A^t(F_w)^{\wedge}_p$ is
a cohomologically-trivial $C$-module, then $\hat H^{m}\bigl(P_w,A^t(F_w)^{\wedge}_p\bigr)$ is trivial for all integers $m$, and then~\cite[Chapter VI, (8.7) and (8.8)]{brown}
combine to imply that $A^t(F_w)^{\wedge}_p$ is a cohomologically-trivial $G_w$-module.

We hence fix a subgroup $C$ of $G_w$ of order $p$. Now cohomology over $C$ is periodic of order $2$ and $\QQ_p\cdot A^t(F_w)^{\wedge}_p$ is isomorphic
(via the formal group logarithm)
to the free $\QQ_p[G_w]$-module $F^{d}_w$, so \cite[Corollary to Proposition 11]{casselsfrohlich} implies that the Herbrand quotient $h(C,A^t(F_w)^\wedge_p)$ is equal to 1.

It is thus enough to prove that the group $\hat H^0\bigl(C,A^t(F_w)^{\wedge}_p\bigr)$ vanishes, or equivalently that the natural norm map
$A^t(F_w)^{\wedge}_p \to A^t(F_w^C)^{\wedge}_p$ is surjective.
But the hypotheses~\ref{hyp_c} and~\ref{hyp_d} imply that $A^t_{/F^C}$ has good reduction at all $p$-adic places and further that at any $p$-adic place $w'$ of $F^C$ which
ramifies in $F/F^C$ the reduction is ordinary and such that $A^t(\kappa_{w'})$ has no $p$-torsion, and so the argument of Mazur in~\cite[\S4]{m} implies that the norm map
$A^t(F_w)^{\wedge}_p \to A^t(F_w^C)^{\wedge}_p$ is surjective, as required to complete the proof of claim (i).
(We note in passing that if $p$ is unramified in $F/\QQ$, as will be the case in applications of Lemma \ref{canisos} in the present article, then the relevant
argument is given by \cite[Corollary 4.4]{m}.)

Turning to claim (ii) we note that, under the hypothesis~\ref{hyp_g}, the  argument of~\cite[p. 86-87]{bufl95} implies directly that $H^i_f\bigl(k,T_{p,F}(A)\bigr)$
vanishes if $i \notin \{1,2,3\}$, that $H^3_f\bigl(k,T_{p,F}(A)\bigr)$ is isomorphic to $(A(F)_{p,{\rm tor}})^\vee$ 
and that there is a canonical exact sequence of $\ZZ_p[G]$-modules
\begin{equation*}\xymatrix@R-2.2ex@C-1.1ex{
0 \ar[r] & H^1_f\bigl(k,T_{p,F}(A)\bigr) \ar[r] & A^t(F)_p \ar[r] &
{\bigoplus\limits_{v\in S_{\rm r}\cup S_{\rm b}}} H^0\Bigl(k_v, H^1\bigl(I_v, T_{p,F}(A)\bigr)_{\rm tor}\Bigr)
\ar `r[d] `[ll] `l[dlll] `d[dll] [dll] \\
& H^2_f\bigl(k,T_{p,F}(A)\bigr) \ar[r] & \Sel_p(A_{F})^\vee\ar[r] & 0.}
\end{equation*}
Now hypothesis~\ref{hyp_a} implies that the $p$-group $P$ acts on $A(F)[p]$ with a single fixed point and hence that $H^3_f\bigl(k,T_{p,F}(A)\bigr)$ vanishes.
Claim (ii) will therefore follow if we can show that, under the stated hypotheses, the group $H^0\bigl(k_v, H^1(I_v, T_{p,F}(A))_{\rm tor}\bigr)$ vanishes for all
$v\in S_{\rm r}\cup S_{\rm b}$. In view of the natural isomorphism
\[
 H^0\Bigl(k_v, H^1\bigl(I_v, T_{p,F}(A)\bigr)_{\rm tor}\Bigr)\cong
 \ZZ_p[G]\otimes_{\ZZ_p[G_w]} H^0\Bigl(F_w, H^1\bigl(I_w, T_p(A)\bigr)_{\rm tor}\Bigr)
\]
it is thus enough to show that $H^0\bigl(F_w, H^1(I_w, T_p(A))_{\rm tor}\bigr)$ vanishes for all $v\in S_{\rm r}\cup S_{\rm b}$. On the one hand, if $v \notin S_{\rm b}$, then $H^1\bigl(I_w, T_p(A)\bigr)=\Hom_{\rm cont}\bigl(I_w,T_p(A)\bigr)$ is torsion-free and so the result is clear. On the other hand, if $v \in S_{\rm b}$, then~\ref{hyp_e} implies that $v \notin S_{\rm r}$ so $I_w = I_{w'}$ and hence the group
 $$ H^0\Bigl(F_w, H^1\bigl(I_w, T_p(A)\bigr)_{\rm tor}\Bigr)^{\overline{G}_{w'}} = H^0\Bigl(K_{w'}, H^1\bigl(I_{w'}, T_p(A)\bigr)_{\rm  tor}\Bigr)$$
vanishes as a consequence of~\ref{hyp_b} and the fact that $H^0\Bigl(K_{w'}, H^1\bigl(I_{w'}, T_p(A)\bigr)_{\rm tor}\Bigr)$ has cardinality equal to the maximal power of $p$ that
divides the Tamagawa number of $A^t_{/K}$ at $w'$. Since $\overline{G}_{w'}$ is a $p$-group, this 
implies that $H^0\bigl(F_w, H^1(I_w, T_p(A))_{\rm tor}\bigr)$ vanishes, as required to complete the proof of claim (ii).
\end{proof}

In the sequel we assume that $A$,  $F$ and $p$ satisfy the hypotheses~\ref{hyp_a}--\ref{hyp_g}. Then Lemma~\ref{canisos} implies that the complex
\[ C^{f,\bullet}_{A,F}:= R\Gamma_f\bigl(k ,T_{p,F}(A)\bigr)\]
belongs to $D^{\rm p}\bigl(\ZZ_p[G]\bigr)$, is acyclic outside degrees one and two and is such that for each isomorphism of fields $j: \CC\cong \CC_p$ there exists a canonical composite isomorphism of $\CC_p[G]$-modules of the form
\begin{multline*}
\lambda^{{\rm NT},j}_{A,F}\colon \CC_p\otimes_{\ZZ_p} H^1\bigl(C^{f,\bullet}_{A,F}\bigr) \cong \CC_p\otimes_{\ZZ_p}A^t(F)_p \\\cong \CC_p\otimes_{\CC,j}(\CC\otimes_\ZZ A^t(F))
 \cong \CC_p\otimes_{\CC,j}\Hom_{\CC}\bigl(\CC\otimes_{\ZZ}A(F),\CC\bigr)\\ \cong \CC_p\otimes_{\ZZ_p}\Hom_{\ZZ_p}\bigl(A(F)_p,\ZZ_p\bigr) \cong \CC_p\otimes_{\ZZ_p}H^2\bigl(C^{f,\bullet}_{A,F}\bigr)\end{multline*}
in which the central isomorphism is induced by the $\CC$-linear extension to $\CC\otimes_{\ZZ} A(F)$ of the N\'eron-Tate height of $A$ relative to the field $F$.

Hypothesis~\ref{hyp_a} implies that the $p$-group $P$ acts on $A(F)[p]$ with a single fixed point and hence that the module $A^t(F)_p\cong H^1(C^{f,\bullet}_{A,F})$ is
$\ZZ_p$-free and so the above observations imply that the construction in \S\ref{ec} applies to the pair $\bigl(C^{f,\bullet}_{A,F},\lambda^{{\rm NT},j}_{A,F}\bigr)$ to
give a canonical Euler characteristic
\[ \chi_j(A,F/k) := -\chi_{G,p}\bigl(C^{f,\bullet}_{A,F},\lambda^{{\rm NT},j}_{A,F}\bigr)\]
in the relative algebraic $K$-group $K_0\bigl(\ZZ_p[G],\CC_p[G]\bigr)$.

This element $\chi_j(A,F/k)$ encodes detailed information about a range of aspects of the arithmetic of $A_{/F}$ and in the remainder of~\S\ref{bloch-kato} we shall explicitly relate it to the equivariant Tamagawa numbers that are defined in~\cite{bufl99}. 
In~\S\ref{congruences section} this comparison result plays a key role in the formulation of a precise conjectural description of a pre-image of $\chi_j(A,F/k)$ under the boundary homomorphism  $K_1\bigl(\CC_p[G]\bigr) \to K_0\bigl(\ZZ_p[G],\CC_p[G]\bigr)$.

\subsection{The element \texorpdfstring{$R\Omega_j\bigl(h^1(A_{/F})(1),\ZZ[G]\bigr)$}{ROmega}.}\label{exp structures}

For each isomorphism $j:\CC\cong\CC_p$ as above there is an induced composite homomorphism of abelian groups
\begin{equation}\label{induced rkt} j_{G,*}: K_0\bigl(\ZZ[G],\RR[G]\bigr) \to K_0\bigl(\ZZ[G],\CC[G]\bigr)\cong K_0\bigl(\ZZ[G],\CC_p[G]\bigr) \to K_0\bigl(\ZZ_p[G],\CC_p[G]\bigr)\end{equation}
(where the first and third arrows are induced by the inclusions $\RR[G]\subset \CC[G]$ and $\ZZ[G]\subset \ZZ_p[G]$ respectively).

For each such $j$ we set
\[ R\Omega_j\bigl(h^1(A_{/F})(1),\ZZ[G]\bigr) := j_{G,*} \Bigl(R\Omega\bigl(h^1(A_{/F})(1),\ZZ[G]\bigr)\Bigr)\]
where $R\Omega\bigl(h^1(A_{/F})(1),\ZZ[G]\bigr)$ is the `algebraic part' of the equivariant Tamagawa number for the pair $\bigl(h^1(A_{/F})(1),\ZZ[G]\bigr)$, as defined
(unconditionally under the assumed validity of hypothesis~\ref{hyp_g}) in~\cite[\S3.4]{bufl99}.

In order to relate this element to the Euler characteristic
$\chi_j(A,F/k)$ defined above we must first introduce an auxiliary element of $K_0\bigl(\ZZ_p[G],\CC_p[G]\bigr)$.

To do this we define a complex
\[
C^{{\rm loc},\bullet}_{A,F} := \bigoplus_{v \in S_\infty}R\Gamma\bigl(k_v,T_{p,F}(A)\bigr) \oplus \bigoplus_{v \in S\cup S_p}R\Gamma_f\bigl(k_v,T_{p,F}(A)\bigr),
\]
where, as before, $S$ denotes $S_{\rm r}\cup S_{\rm b}$. It is then clear that $C^{{\rm loc},\bullet}_{A,F}$ is acyclic outside degrees zero and one, that $H^0\bigl(C^{{\rm loc},\bullet}_{A,F}\bigr) = \bigoplus_{v \in S_\infty}H^0(k_v,T_{p,F}(A))$ and that there is a canonical identification of $\QQ_p\otimes_{\ZZ_p}H^1\bigl(C^{{\rm loc},\bullet}_{A,F}\bigr)$ with $\QQ_p\otimes_{\ZZ_p}A^t(F_p)^\wedge_p$,
where we set $F_p:= F\otimes_\QQ\QQ_p$.

We next use this description of the cohomology of $C^{{\rm loc},\bullet}_{A,F}$ to define a canonical isomorphism of $\CC_p[G]$-modules
\[ \lambda_{A,F}^{{\rm exp},j} : \CC_p\otimes_{\ZZ_p}H^0\bigl(C^{{\rm loc},\bullet}_{A,F}\bigr) \cong \CC_p\otimes_{\ZZ_p}H^1\bigl(C^{{\rm loc},\bullet}_{A,F}\bigr).\]
For this we write $\Sigma(k)$ for the set of embeddings $k \to \CC$ and $\Sigma_\sigma(F)$, for each $\sigma$ in $\Sigma(k)$, for the set of embeddings $F \to \CC$ that extend $\sigma$. For $v$ in $S_\infty$ we fix a corresponding element $\sigma_v$ of $\Sigma(k)$ and consider the $\CC$-linear map
\begin{multline*}
 \pi_v\colon\ \CC\otimes_\ZZ H_1\bigl(\sigma_v(A^t)(\CC),\ZZ\bigr)\\
  \longrightarrow \Hom_\CC \Bigl(\CC\otimes_{k,\sigma_v} H^0\bigl(A^t,\Omega^1_{A^t}\bigr),\,\CC\Bigr) = \CC\otimes_{k,\sigma_v}\Hom_k\bigl(H^0(A^t,\Omega^1_{A^t}),k\bigr)
\end{multline*}
that sends the image $\gamma$ in $H_1\bigl(\sigma_v(A^t)(\CC),\ZZ\bigr)$ of a cycle $\hat \gamma$ to the map induced by sending a differential $\omega$ to $\int_{\gamma}\omega := \int_{\hat\gamma}\omega$.

For each place $v$ in $S_\infty$ we also write $Y_{v,F}$ for the module $\prod_{\Sigma_{\sigma_v}(F)}\ZZ$, endowed with its natural action of the direct product $G\times G_v$, and then define $\lambda_{A,F}^{{\rm exp},j}$ to be the following composite isomorphism of $\CC_p[G]$-modules
\begin{align*}
\CC_p\otimes_{\ZZ_p}H^0\bigl(C^{{\rm loc},\bullet}_{A,F}\bigr) = &\,\CC_p\otimes_{\ZZ_p}\bigoplus_{v \in S_\infty} H^0\bigl(k_v,T_{p,F}(A)\bigr) \\
\cong &\, \CC_p\otimes_{\ZZ} \bigoplus_{v \in S_\infty} H^0\Bigl(k_v, Y_{v,F}\otimes_{\ZZ}H_1(\sigma_v(A^t)(\CC),\ZZ)\Bigr)\\
\cong &\,\CC_p\otimes_{\QQ} \Bigl(F\otimes_k \Hom_k\bigl(H^0(A^t,\Omega^1_{A^t}),k\bigr)\Bigr)\\
\cong &\, \CC_p\otimes_{\ZZ_p} A^t(F_p)^{\wedge}_p \\
= &\,\CC_p\otimes_{\ZZ_p}H^1\bigl(C^{{\rm loc},\bullet}_{A,F}\bigr).
\end{align*}
Here the first isomorphism is induced by the canonical comparison isomorphisms
\begin{equation}\label{correct comp}
 T_{p,F}(A) = Y_{F/k,p}\otimes_{\ZZ_p}T_{p}(A)\cong Y_{v,F,p}\otimes_{\ZZ_p} H_1\bigl(\sigma_v(A^t)(\CC),\ZZ\bigr)_p,
 \end{equation}
the second by the maps $\CC_p\otimes_{\CC,j}\pi_v$ and the canonical decomposition
\begin{align}\label{period decomp}
 \RR\otimes_{\QQ}\Bigl(F\otimes_k& \Hom_k\bigl(H^0(A^t,\Omega^1_{A^t}),k\bigr)\Bigr) \\
\cong &\bigoplus_{v\in S_\infty} H^0\Bigl(k_v, \bigl(\CC\otimes_{k,\sigma}F\bigr)\otimes_\CC\bigl(\CC\otimes_{k,\sigma}\Hom_k(H^0(A^t,\Omega^1_{A^t}),k)\bigr)\Bigr)\notag\\
\cong &\bigoplus_{v\in S_\infty} H^0\Bigl(k_v,\, \CC Y_{v,F}\otimes_\CC\bigl(\CC\otimes_{k,\sigma} \Hom_k\bigl(H^0(A^t,\Omega^1_{A^t}),k\bigr)\bigr)\Bigr),\notag
\end{align}
and the third by the sum over places $v$ in $S_p$ of the classical exponential map 
\begin{equation}\label{classical exp}
 F_v\otimes_k \Hom_k\bigl(H^0(A^t,\Omega^1_{A^t}),k\bigr) \cong \Hom_{F_v}\bigl(H^0(A^t_{/F_v},\Omega^1_{A^t_{/F_v}}),F_v\bigr) \cong \QQ_pA^t(F_v)^{\wedge}_p
\end{equation}
where we set $F_v := F\otimes_kk_v$.

Under our stated hypotheses the complex $C^{{\rm loc},\bullet}_{A,F}$ belongs to $D^{\rm p}\bigl(\ZZ_p[G]\bigr)$. It is also acyclic outside degrees zero and one and such that $H^0\bigl(C^{{\rm loc},\bullet}_{A,F}\bigr)$ is $\ZZ_p$-free and so we may use the construction of \S\ref{ec} to define an element of $K_0\bigl(\ZZ_p[G],\CC_p[G]\bigr)$ by setting
\[ \chi_j^{\rm loc}(A,F/k) := \chi_{G,p}\Bigl(C^{{\rm loc},\bullet}_{A,F},\, \lambda_{A,F}^{{\rm exp},j}\Bigr).\]

We are now ready to state the main result of this section. In this result, and the sequel, we shall use the composite homomorphism
\[ \delta_{G,p}:\zeta\bigl(\CC_p[G]\bigr)^{\times}\to K_1\bigl(\CC_p[G]\bigr)\to K_0\bigl(\ZZ_p[G],\CC_p[G]\bigr)\]
where the first arrow is the inverse of the (bijective) reduced norm homomorphism $\Nrd_{\CC_p[G]}$ and the second is the standard boundary homomorphism $\partial_{G,p}$.

\begin{proposition}\label{comparison lemma}
 Assume that $A$ and $F$ satisfy the hypotheses~\ref{hyp_a}--\ref{hyp_g}. Then in $K_0\bigl(\ZZ_p[G],\CC_p[G]\bigr)$ one has
\[ R\Omega_j\bigl(h^1(A_{/F})(1),\ZZ[G]\bigr) = -\chi_j(A,F/k) - \chi_j^{\rm loc}(A,F/k) + \sum_{v \in S\cup S_p}\delta_{G,p}\bigl(L_v(A,F/k)\bigr).\]
Here for each place $v$ in $S\cup S_p$ we set
\[ L_v(A,F/k) := \begin{cases} \Nrd_{\QQ_p[G]}\bigl(1- \Fr_v^{-1}\bigm\vert V_{p,F}(A)^{I_{v}}\bigr), &\text{if $v\nmid p$}\\
\Nrd_{\QQ_p[G]}\bigl(1- \varphi_v\bigm\vert D_{{\rm cr},v}(V_{p,F}(A))\bigr), &\text{if $v\mid p$,}\end{cases}\]
where $\varphi_v$ is the crystalline Frobenius at $v$.
\end{proposition}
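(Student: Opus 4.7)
The plan is to prove this via the standard strategy of unpacking the defining expression of $R\Omega_j(h^1(A_{/F})(1),\ZZ[G])$ from~\cite{bufl99} in terms of $p$-adic complexes and then using additivity of refined Euler characteristics along suitable distinguished triangles.

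First I would recall that the element $R\Omega(h^1(A_{/F})(1),\ZZ[G])$ is defined in~\cite[\S3.4]{bufl99} via a trivialisation, over $\RR[G]$, of a fundamental line built from motivic cohomology, the de Rham--Betti period comparison and the N\'eron model of $A_{/F}$. After applying the homomorphism $j_{G,*}$ of~\eqref{induced rkt}, so in particular after tensoring with $\ZZ_p$ and transporting via $j$ to $\CC_p[G]$, the descent formalism of Burns--Flach allows this fundamental line to be identified, up to terms controlled by the local Euler factors, with the determinant of $R\Gamma(\mathcal{O}_{k,S}[\tfrac{1}{p}],T_{p,F}(A))$ twisted by appropriate local determinants. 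The induced trivialisation is constructed from the classical N\'eron--Tate height pairing (via $\lambda^{{\rm NT},j}_{A,F}$) and from the Bloch--Kato exponential together with the period isomorphisms~\eqref{correct comp},~\eqref{period decomp} and~\eqref{classical exp} (via $\lambda^{{\rm exp},j}_{A,F}$).

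Next I would exploit the defining exact triangle~\eqref{rgft} together with the local triangles~\eqref{localtriangle}. By Lemma~\ref{canisos}(i), all three complexes appearing in~\eqref{rgft} lie in $D^{\rm p}(\ZZ_p[G])$, so the additivity of refined Euler characteristics along distinguished triangles applies. The resulting identity expresses the Euler characteristic built from $R\Gamma(\mathcal{O}_{k,S}[\tfrac{1}{p}],T_{p,F}(A))$ as the sum of $-\chi_j(A,F/k)$ (coming from $C^{f,\bullet}_{A,F}$ with the N\'eron--Tate trivialisation, after accounting for the sign in the definition), the contribution of the local summand $\bigoplus_{v\in S\cup S_p}R\Gamma_{/f}(k_v,T_{p,F}(A))$, and the analogous contribution from the archimedean places. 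The archimedean and finite local pieces that appear under the isomorphism $\lambda^{{\rm exp},j}_{A,F}$ are precisely those assembled in $C^{{\rm loc},\bullet}_{A,F}$, producing the term $-\chi_j^{\rm loc}(A,F/k)$.

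The remaining local correction factors $\delta_{G,p}(L_v(A,F/k))$ arise from the triangles~\eqref{localtriangle}. For $v\nmid p$ (including $v\in S$), under hypotheses~\ref{hyp_e} and~\ref{hyp_f} Lemma~\ref{canisos}(i) shows that the ramified local complexes are acyclic, while for unramified $v$ the Euler characteristic of $R\Gamma_{/f}(k_v,T_{p,F}(A))$ (relative to a trivial trivialisation on its $\QQ_p$-acyclic cohomology) is computed by the determinant of $1-\Fr_v^{-1}$ acting on $V_{p,F}(A)^{I_v}$; for $v\mid p$, the discrepancy between the Bloch--Kato subspace and the full local cohomology is measured by $\det_{\QQ_p[G]}(1-\varphi_v\mid D_{{\rm cr},v}(V_{p,F}(A)))$, via the crystalline exponential sequence. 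Combining everything and applying $\delta_{G,p}\circ\Nrd_{\CC_p[G]}$ to convert these determinants into elements of $K_0(\ZZ_p[G],\CC_p[G])$ gives the claimed identity.

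The main obstacle will be bookkeeping: one must carefully match the (sign-sensitive) definition of $\chi_{G,p}$ used here against the determinantal normalisation of $R\Omega$ in~\cite{bufl99}, verify that the comparison isomorphism used to trivialise the fundamental line really decomposes as the pair $(\lambda^{{\rm NT},j}_{A,F},\lambda^{{\rm exp},j}_{A,F})$, and check that the isomorphism~\eqref{correct comp} together with the decomposition~\eqref{period decomp} accounts correctly for the periods (in particular at complex places and in the presence of the twist $(1)$). Once these compatibilities are in place, the formula follows by pure additivity of refined Euler characteristics along~\eqref{localtriangle} and~\eqref{rgft}.
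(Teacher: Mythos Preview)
Your broad strategy---unpack the definition of $R\Omega_j$ and apply additivity of refined Euler characteristics along a suitable triangle---matches the paper's, but the specific decomposition you propose is not the one that works, and your explanation of where the terms $L_v(A,F/k)$ come from is incorrect.

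The paper does \emph{not} use the triangle~\eqref{rgft}. The element $R\Omega_j$ is defined in~\cite{bufl99} via the compactly supported complex $C^{c,\bullet}_{A,F}=R\Gamma_c(\mathcal{O}_{k,S}[\tfrac{1}{p}],T_{p,F}(A))$, not via $R\Gamma(\mathcal{O}_{k,S}[\tfrac{1}{p}],T_{p,F}(A))$. The relevant triangle is the one the paper labels~\eqref{can tri},
\[
C^{{\rm loc},\bullet}_{A,F}[-1]\to C^{c,\bullet}_{A,F}\to C^{f,\bullet}_{A,F}\to C^{{\rm loc},\bullet}_{A,F},
\]
obtained by directly comparing the definitions of $C^{c,\bullet}_{A,F}$ and $C^{f,\bullet}_{A,F}$. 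Additivity along this triangle immediately splits $R\Omega_j$ into a piece coming from $C^{f,\bullet}_{A,F}$ (giving $-\chi_j(A,F/k)$) and a piece coming from $C^{{\rm loc},\bullet}_{A,F}[-1]$ (giving $-\chi_j^{\rm loc}(A,F/k)$). Your route through~\eqref{rgft} would leave you with the Euler characteristic of $R\Gamma(\mathcal{O}_{k,S}[\tfrac{1}{p}],T_{p,F}(A))$ and of the complexes $R\Gamma_{/f}(k_v,T_{p,F}(A))$, and you would still need to reassemble these into $R\Gamma_c$ via the defining triangle for compactly supported cohomology---an extra layer you never mention.

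More importantly, the correction terms $\delta_{G,p}(L_v(A,F/k))$ do \emph{not} arise as Euler characteristics of $R\Gamma_{/f}(k_v,T_{p,F}(A))$ (these complexes are not even $\QQ_p$-acyclic in general). In the paper they arise from a mismatch of \emph{trivialisations}: for each $v\in S\cup S_p$ one has an acyclic complex $V^\bullet_{p,v}$ given by $V_{p,v}\xrightarrow{1-\phi_v}V_{p,v}$ (with $V_{p,v}$ equal to $V_{p,F}(A)^{I_v}$ or $D_{{\rm cr},v}(V_{p,F}(A))$). The definition of $\lambda^{{\rm exp},j}_{A,F}$ trivialises $[V^\bullet_{p,v}]$ via its acyclicity, whereas the map $\vartheta_p$ of~\cite[(19), (22)]{bufl99} trivialises it via the identity on $V_{p,v}$; by~\cite[remark after (24)]{bufl99} the difference between these two trivialisations is exactly $\Nrd_{\QQ_p[G]}(1-\phi_v\mid V_{p,v})=L_v(A,F/k)$. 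This is encoded in the paper's diagram~\eqref{comm diag} by the vertical map $\alpha'$. Your account locates these factors in the wrong place and would not produce them correctly.

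Finally, the paper works throughout in the Picard category $V(\ZZ_p[G])$ of virtual objects and only at the end converts to refined Euler characteristics via the normalisation formula~\eqref{normalisation}; this is what makes the sign bookkeeping (which you rightly flag as delicate) tractable.
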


\begin{proof}
 To discuss $R\Omega_j\bigl(h^1(A_{/F})(1),\ZZ[G]\bigr)$ we first recall relevant facts concerning the formalism of virtual objects introduced by Deligne in~\cite{delignedet} (for more details see~\cite[\S2]{bufl99}).

With $R$ denoting either $\ZZ_p[G]$ or $\CC_p[G]$ we write $V(R)$ for the category of virtual objects over $R$ and $[C]_R$ for the object of $V(R)$ associated to each $C$ in $D^{\rm p}(R)$. Then $V(R)$ is a Picard category with $\pi_1(V(R))$ naturally isomorphic to $K_1(R)$ (see~\cite[(2)]{bufl99}) and we write $(X,Y) \mapsto X\cdot Y$ for its product and $\eins_R$ for the unit object $[0]_R$. Writing $\mathcal{P}_0$ for the
Picard category with unique object $\eins_{\mathcal{P}_0}$
and $\Aut_{\mathcal{P}_0}(\eins_{\mathcal{P}_0}) = 0$ we use the isomorphism of abelian groups
\begin{equation}\label{virtual iso}
 \pi_0\Bigl(V\bigl(\ZZ_p[G]\bigr)\times_{V(\CC_p[G])}\mathcal{P}_0\Bigr)\cong K_0\bigl(\ZZ_p[G],\CC_p[G]\bigr)
\end{equation}
that is described in~\cite[Proposition 2.5]{bufl99}. In particular, via this isomorphism, each pair comprising an object $X$ of $V\bigl(\ZZ_p[G]\bigr)$ and a morphism $\iota: \CC_p[G]\otimes_{\ZZ_p[G]}X \to \eins_{\CC_p[G]}$ gives rise to a canonical element $[X,\iota]$ of $K_0\bigl(\ZZ_p[G],\CC_p[G]\bigr)$.

Now if $C$ belongs to $D^{\rm p}\bigl(\ZZ_p[G]\bigr)$ and is acyclic outside degrees $a$ and $a+1$ (for any integer $a$), then any isomorphism  $\lambda: H^a\bigl(\CC_p\otimes_{\ZZ_p}C\bigr) \cong H^{a+1}\bigl(\CC_p\otimes_{\ZZ_p}C\bigr)$ of $\CC_p[G]$-modules gives a canonical morphism $\lambda_{\rm Triv}: [\CC_p\otimes_{\ZZ_p}C]_{\CC_p[G]} \to \eins_{\CC_p[G]}$ in $V\bigl(\CC_p[G]\bigr)$.
The associated element $\bigl[[C]_{\ZZ_p[G]},\lambda_{{\rm Triv}}\bigr]$ of  $K_0\bigl(\ZZ_p[G],\CC_p[G]\bigr)$ coincides with the Euler characteristic $\chi_{\ZZ_p[G],\CC_p[G]}\bigl(C,\lambda^{(-1)^a}\bigr)$ defined in~\cite[Definition 5.5]{additivity}. In particular, from Proposition 5.6(3) in loc. cit. it follows that $\bigl[[C[-1]]_{\ZZ_p[G]},\lambda_{{\rm Triv}}\bigr] = -\bigl[[C]_{\ZZ_p[G]},\lambda_{{\rm Triv}}\bigr]$, whilst Theorem 6.2 and Lemma 6.3 in loc. cit. combine to imply that if $H^a(C)$ is $\ZZ_p$-free, then
\begin{equation}\label{normalisation}
  \bigl[[C]_{\ZZ_p[G]},\lambda_{{\rm Triv}}\bigr] = \chi_{G,p}(C,\lambda) + \delta_{G,p}\biggl(\prod_{i \equiv 1,2 \bmod{4} }\Nrd_{\QQ_p[G]} \bigl(-{\id}\bigm\vert \QQ_p\otimes_{\ZZ_p}H^i(C)\bigr)^{(-1)^i}\biggr),\end{equation}
where $\chi_{G,p}(-,-)$ is the explicit Euler characteristic discussed in \S\ref{ec}.

We now set
\[ C^{c,\bullet}_{A,F} := R\Gamma_c\bigl(\mathcal{O}_{k,S}[\tfrac{1}{p}],T_{p,F}(A)\bigr),\]
which is the complex of the compactly supported \'etale cohomology of $T_{p,F}(A)$ on $\Spec\bigl(\mathcal{O}_{k,S}[\tfrac{1}{p}]\bigr)$, regarded as a complex of $\ZZ_p[G]$-modules in the natural way.

Then a direct comparison of the definitions of $C^{c,\bullet}_{A,F}$ and $C^{f,\bullet}_{A,F}$ shows that there is a canonical exact triangle in $D^{\rm p}\bigl(\ZZ_p[G]\bigr)$
\begin{equation}\label{can tri}
 \xymatrix@1{
 C^{{\rm loc},\bullet}_{A,F}[-1] \ar[r] & C^{c,\bullet}_{A,F} \ar[r] &  C^{f,\bullet}_{A,F}\ar[r] &  C^{{\rm loc},\bullet}_{A,F}.}
\end{equation}

We consider the following diagram in $V\bigl(\CC_p[G]\bigr)$
\begin{equation}\label{comm diag} \xymatrix@C+1ex@R+1ex{
\bigl[\CC_p\otimes_{\ZZ_p}C^{c,\bullet}_{A,F}\bigr]_{\CC_p[G]} \ar[r]^(.34){\Delta} \ar[d]_{\alpha} &
    \bigl[\CC_p\otimes_{\ZZ_p}C^{{\rm loc},\bullet}_{A,F}[-1]\bigr]_{\CC_p[G]}\cdot \bigl[\CC_p\otimes_{\ZZ_p}C^{f,\bullet}_{A,F}\bigr]_{\CC_p[G]}
      \ar[d]^{\bigl[(\lambda_{A,F}^{{\rm exp},j})_{\rm Triv}\bigr]_{\CC_p[G]}\cdot \bigl[(\lambda^{{\rm NT},j}_{A,F})_{\rm Triv}\bigr]_{\CC_p[G]}}\\
\eins_{\CC_p[G]} \ar[r]^{\iota} \ar[d]_{\alpha'} & \eins_{\CC_p[G]}\cdot\eins_{\CC_p[G]} \ar[d]^{\bigl(\prod_{v\in S\cup S_p}L_v(A,F/k)\bigr)\cdot{\id}}\\
\eins_{\CC_p[G]} \ar[r]^{\iota} & \eins_{\CC_p[G]}\cdot\eins_{\CC_p[G]}.}
\end{equation}
Here $\Delta$ is the morphism induced by the scalar extension of~\eqref{can tri}, $\iota$ denotes the canonical identification and the morphisms  $\alpha$ and $\alpha'$ are defined by the condition that the two squares commute.

We claim that this definition of $\alpha$ and $\alpha'$ implies that
\begin{equation}\label{comp eq}\alpha'\circ \alpha = (\CC_p\otimes_{\RR,j}\vartheta_\infty)\circ(\CC_p\otimes_{\QQ_p}\vartheta_p)^{-1}\end{equation}
where the morphisms $\vartheta_\infty$ and $\vartheta_p$ are as constructed in~\cite[\S3.4]{bufl99}. To verify this we note that the scalar extension of~\eqref{can tri} is naturally isomorphic to the exact triangle in $D^{\rm p}\bigl(\CC_p[G]\bigr)$ induced by the central column of the diagram~\cite[(26)]{bufl99} and then simply compare the explicit definitions of the morphisms $\lambda_{A,F}^{{\rm exp},j}$ and $\vartheta_p$ and of $\lambda^{{\rm NT},j}_{A,F}$ and $\vartheta_\infty$.
After this it only remains to note the following fact. For each place $v \in S\setminus S_p$, respectively $v \in S_p$, we write $V_{p,v}$ and $\phi_v$ for $V_{p,F}(A)^{I_{v}}$ and $\Fr_v^{-1}$, respectively $D_{{\rm cr},v}\bigl(V_{p,F}(A)\bigr)$ and $\varphi_v$, and then $V_{p,v}^\bullet$ for the complex $V_{p,v}\xrightarrow{1-\phi_v}V_{p,v}$, with the first term placed in degree zero.
Our definition of $\lambda_{A,F}^{{\rm exp},j}$ implicitly uses the morphism $[V^\bullet_{p,v}]_{\QQ_p[G]} \to \eins_{\QQ_p[G]}$ induced by the acyclicity of $V^\bullet_{p,v}$ whereas the definition of $\vartheta_p$ uses (via~\cite[(19) and (22)]{bufl99}) the morphism $[V^\bullet_{p,v}]_{\QQ_p[G]} \to \eins_{\QQ_p[G]}$ induced by the identity map on $V_{p,v}$;
the occurrence of the morphism $\alpha'$ in the equality~\eqref{comp eq} is thus accounted for by applying the remark made immediately after~\cite[(24)]{bufl99} to each of the complexes $V^\bullet_{p,v}$ and noting that $\Nrd_{\QQ_p[G]}(1-\phi_v\mid V_{p,v}) = L_v(A,F/k)$.

Now, taking into account the equality~\eqref{comp eq}, the term $R\Omega_j\bigl(h^1(A_{/F})(1),\ZZ[G]\bigr)$ is defined in~\cite{bufl99} to be equal to
\[
 \Bigl[\bigl[C^{c,\bullet}_{A,F}\bigr]_{\ZZ_p[G]},\, (\CC_p\otimes_{\RR,j}\vartheta_\infty)\circ(\CC_p\otimes_{\QQ_p}\vartheta_p)^{-1}\Bigr] = \Bigl[\bigl[C^{c,\bullet}_{A,F}\bigr]_{\ZZ_p[G]},\,\alpha'\circ \alpha\Bigr].
\]
The product structure of $\pi_0\bigl(V(\ZZ_p[G])\times_{V(\CC_p[G])}\mathcal{P}_0\bigr)$ then combines with the commutativity of~\eqref{comm diag} to imply that it is also equal to

\begin{align*}
  \Bigl[ \bigl[C&^{c,\bullet}_{A,F}  \bigr]_{\ZZ_p[G]}, \,\alpha\Bigr]  + \bigl[\eins_{\ZZ_p[G]},\,\alpha'\bigr]\\
& = \Bigl[ \bigl[C^{{\rm loc},\bullet}_{A,F}[-1]\bigr]_{\ZZ_p[G]},\,\bigl(\lambda_{A,F}^{{\rm exp},j}\bigr)_{\rm Triv}\Bigr]
    + \Bigl[ \bigl[C^{f,\bullet}_{A,F}\bigr]_{\ZZ_p[G]},\,\bigl(\lambda^{{\rm NT},j}_{A,F}\bigr)_{\rm Triv}\Bigr]
    + \bigl[\eins_{\ZZ_p[G]},\,\alpha'\bigr]\\
& = -\Bigl[\bigl[C^{{\rm loc},\bullet}_{A,F}\bigr]_{\ZZ_p[G]},\,\bigl(\lambda_{A,F}^{{\rm exp},j}\bigr)_{\rm Triv}\Bigr]
   + \Bigl[\bigl[C^{f,\bullet}_{A,F}\bigr]_{\ZZ_p[G]},\, \bigl(\lambda^{{\rm NT},j}_{A,F}\bigr)_{\rm Triv}\Bigr] \\
& \phantom{= - aa}   + \Bigl[\eins_{\ZZ_p[G]},\,\prod_{v\in S\cup S_p}L_v(A,F/k)\Bigr].
\end{align*}
This implies the claimed result since~\eqref{normalisation} implies that
\[ \Bigl[\bigl[C^{{\rm loc},\bullet}_{A,F}\bigr]_{\ZZ_p[G]},\,\bigl(\lambda_{A,F}^{{\rm exp},j}\bigr)_{\rm Triv}\Bigr] = \chi_j^{\rm loc}(A,F/k)\]
(as $\delta_{G,p}\bigr(\Nrd_{\QQ_p[G]}\bigl(-{\id}\bigm\vert \QQ_p\otimes_{\ZZ_p}H^1(C^{{\rm loc},\bullet}_{A,F})\bigr)\bigr) = 0$ because $\QQ_p\otimes_{\ZZ_p}H^1\bigl(C^{{\rm loc},\bullet}_{A,F}\bigr)$ is a free $\QQ_p[G]$-module) and
\[ \Bigl[\bigl[C^{f,\bullet}_{A,F}\bigr]_{\ZZ_p[G]},\,\bigl(\lambda^{{\rm NT},j}_{A,F}\bigr)_{\rm Triv}\Bigr] = - \chi_j(A,F/k)\]
(because $\prod_{i=1}^{i=2}\Nrd_{\QQ_p[G]}\bigl(-{\id}\bigm\vert \QQ_p\otimes_{\ZZ_p}H^i(C^{f,\bullet}_{A,F})\bigr)^{(-1)^i} = 1$) whilst
 the explicit description of the isomorphism~\eqref{virtual iso} implies that
\begin{align*} \Bigl[\eins_{\ZZ_p[G]},\,\prod_{v\in S\cup S_p}L_v(A,F/k)\Bigr] =& \delta_{G,p}\Bigl(\prod_{v\in S\cup S_p}L_v(A,F/k)\Bigr)\\ =& \sum_{v\in S\cup S_p}\delta_{G,p}(L_v(A,F/k)).\end{align*}
\end{proof}

\subsection{The local term}\label{local term section}
 In this section we explicitly compute the Euler characteristic $\chi^{\rm loc}_{j}(A,F/k)$ 
 that occurs in Proposition~\ref{comparison lemma} in terms of both archimedean periods and global Galois-Gauss sums.

In the sequel we sometimes suppress explicit reference to the fixed identification of fields $j:\CC \cong \CC_p$. In addition, for each natural number $m$ we will write $[m]$ for the set of integers $i$ which satisfy $1 \le i\le m$.

\subsubsection{Periods and Galois Gauss sums.}\label{archi_subsubsection}

We fix N\'eron models $\mathcal{A}^t$ for $A^t$ over $\mathcal{O}_k$ and $\mathcal{A}^t_v$ for $A^t_{/k_v}$ over $\mathcal{O}_{k_v}$ for each $v$ in $S_p^k$, and then fix a $k$-basis $\{\omega_{b}\}_{b \in [d]}$ of the space of invariant differentials $H^0(A^t, \Omega^1_{A^t})$ which gives $\mathcal{O}_{k_v}$-bases of $H^0\bigl(\mathcal{A}^t_v,\Omega_{\mathcal{A}^t_v}^1\bigr)$ for each $v$ in $S_p^k$ and is also such that each $\omega_b$ extends to an element of $H^0\bigl(\mathcal{A}^t, \Omega^1_{\mathcal{A}^t}\bigr)$.

For each place $v$ in $S_\RR$ we fix $\ZZ$-bases $\{\gamma_{v,a}^+\}_{a\in [d]}$ and $\{\gamma_{v,a}^-\}_{a\in [d]}$ of $H_1\bigl(\sigma_v(A^t)(\CC),\ZZ\bigr)^{c=1}$ and $H_1\bigl(\sigma_v(A^t)(\CC),\ZZ\bigr)^{c=-1}$, where $c$ denotes complex conjugation, and then set
\[
\Omega_{v}^+(A) := \Biggl\vert\det \biggl( \int_{\gamma_{v,a}^{+}} \omega_b\biggr)_{\!a,b} \Biggr\vert, \qquad
\Omega_{v}^-(A) := \Biggl\vert\det \biggl( \int_{\gamma_{v,a}^{-}} \omega_b\biggr)_{\!a,b} \Biggr\vert,
\]
where in both matrices $(a,b)$ runs over $[d]\times [d]$. For each $v$ in $S_\CC$ we fix a $\ZZ$-basis $\{\gamma_{v,a}\}_{a\in [2d]}$ of $H_1\bigl(\sigma_v(A^t)(\CC),\ZZ\bigr)$ and set
\[
 \Omega_v(A) := \Biggl\vert\det \biggl( \int_{\gamma_{v,a}} \omega_b , c\Bigl(\int_{\gamma_{v,a}} \omega_b\Bigr) \biggr)_{\!a,b} \Biggr\vert
\]
where $(a,b)$ runs over $[2d]\times [d]$. (By explicitly computing integrals these periods can be related to those obtained by integrating measures as occurring in the classical formulation of the Birch and Swinnerton-Dyer conjecture -- see, for example, Gross~\cite[p. 224]{G-BSD}).

For any $\psi\in\Ir(G)$ and $v\in S_\RR$, we set $\psi_v^+(1) := \dim_\CC V_\psi^{G_{k_v}}$ and $\psi_v^-(1) := \psi(1) - \psi_v^+(1)$. Then we define the periods
\begin{equation*}
  \Omega_v(A,\psi) :=
   \begin{cases}
     \Omega^+_v(A)^{\psi_v^+(1)}\cdot \Omega^-_v(A)^{\psi_v^-(1)} &\text{ if $v\in S_{\RR}$ and}\\
     \Omega_v(A)^{\psi(1)} &\text{ if $v\in S_{\CC}$;}
   \end{cases}
\end{equation*}
and $\Omega(A,\psi) := \prod_{v \in S_\infty} \Omega_v(A,\psi)$ and finally set
\[
  \Omega(A,F/k) := \sum_{\psi \in \Ir(G)} \Omega(A,\psi)\, e_\psi \in \zeta\bigl(\CC_p[G]\bigr)^\times.
\]
For each $v$ in $S_\RR$, respectively. in $S_\CC$, we also define $w_v(\psi)$ to be equal to $i^{\psi^-_v(1)}$, respectively. $i^{\psi(1)}$, and then set
 $w_\infty(\psi) := \prod_{v \in S_\infty} w_v(\psi)$ and
\[ w_\infty(F/k) := \sum_{\psi\in \Ir(G)} w_\infty(\psi)\,e_\psi \in \zeta\bigl(\CC_p[G]\bigr)^\times.\]

To describe the relevant Galois Gauss sums we first define for each non-archi\-me\-dean place $v$ of $k$ the `non-ramified characteristic' $u_v$ to be the image under the natural induction map $\zeta\bigl(\CC_p[\,\overline{G}_v\,]\bigr)^\times \to \zeta\bigl(\CC_p[G]\bigr)^\times$ of the element $(1-e_{\overline{I}_v}) + (-\Fr_w^{-1})e_{\overline{I}_v}$ of $\zeta(\QQ[\overline{G}_v])^\times$.

For each character $\psi$ in $\Ir(G)$ we then define a
modified equivariant global Galois-Gauss sum by setting
\[
  \tau^*(F/k) := \biggl(\prod_{v \in S_{\rm r}^k} u_v \biggr) \sum_{\psi \in \Ir(G)} e_\psi\, \tau\bigl(\QQ,\Ind_k^\QQ\psi\bigr) \in \zeta\bigl(\QQ^c[G]\bigr)^\times,
\]
where the individual Galois Gauss sums $\tau(\QQ,-)$ are as defined by Martinet in~\cite{martinet}.

\subsubsection{Computation of the local Euler characteristic.}
 The explicit computation of the Euler characteristic $\chi_j^{\rm loc}(A,F/k)$ is made considerably more difficult by the presence of $p$-adic places which ramify in
 any of the relevant field extensions. For simplicity in the sequel, and to focus attention on the key ideas in the present article, we therefore impose the following
 additional hypothesis

\begin{conditions}\setcounter{condone}{7}
  \item\label{hyp_h} $p$ is unramified in $F/\QQ$.
\end{conditions}
A full treatment of the terms $\chi_j^{\rm loc}(A,F/k)$ in the general (ramified) case will then be given in a future article.

In the following result we use the elements $L_v(A,F/k)$ of $\zeta(\QQ_p[G])^\times$ that are defined in Proposition~\ref{comparison lemma}. We will also write $j_*$ for the isomorphism $\zeta(\CC[G])^\times \cong \zeta(\CC_p[G])^\times$ that is induced by $j$. We finally recall that $d$ denotes the dimension of $A$.

\begin{theorem}\label{comp prop}
 If $A$ and $F$ satisfy the hypotheses~\ref{hyp_a}--\ref{hyp_h}, then one has
\[ \chi_j^{\rm loc}(A,F/k) = \delta_{G,p}\Bigl(j_*(\frac{w_\infty(F/k)^{d}\cdot \Omega(A,F/k)}{\tau^*(F/k)^{d}})\prod_{v \in S_{\rm b}\cup S_{p}}L_v(A,F/k)\Bigr).\]
\end{theorem}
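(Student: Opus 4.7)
The plan is to compute $\chi^{\rm loc}_j(A, F/k)$ by exploiting the direct-sum decomposition of $C^{{\rm loc},\bullet}_{A,F}$ into local pieces (one for each $v \in S_\infty \cup S \cup S_p$) and the parallel decomposition of the trivialisation $\lambda^{{\rm exp},j}_{A,F}$. Since the refined Euler characteristic of \S\ref{ec} is additive under such direct sums, one obtains $\chi^{\rm loc}_j(A, F/k) = \sum_v \chi^v$ where each $\chi^v$ is computed from the local data at $v$. A case-by-case inspection shows that each $\chi^v$ lies in the image of $\delta_{G,p}$, so the task reduces to identifying $\Nrd_{\CC_p[G]}^{-1}(\chi^v)$ explicitly.

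For $v \in S_\infty$ the complex $R\Gamma(k_v, T_{p,F}(A))$ is concentrated in degree zero, and the corresponding piece of $\lambda^{{\rm exp},j}_{A,F}$ is obtained by combining the comparison isomorphism~\eqref{correct comp}, the period map $\pi_v$ and the decomposition~\eqref{period decomp}. A character-by-character determinant computation using the bases $\{\omega_b\}$, $\{\gamma^\pm_{v,a}\}$ and $\{\gamma_{v,a}\}$ chosen in \S\ref{archi_subsubsection} then shows that $\chi^v$ is the image under $\delta_{G,p} \circ j_*$ of the element whose $e_\psi$-component equals $\Omega_v(A, \psi) \cdot w_v(\psi)^d$; the $w_v(\psi)$-factors arise from renormalising a $\CC$-rational basis of $\CC \otimes_{k,\sigma_v} F$ to a basis of $\CC Y_{v,F}$ compatible with the $(\pm 1)$-eigenspace decomposition of complex conjugation. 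Summation over $S_\infty$ yields the archimedean contribution $\delta_{G,p}\bigl(j_*(w_\infty(F/k)^d \cdot \Omega(A, F/k))\bigr)$.

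For non-archimedean $v \in S \setminus S_p$ one distinguishes two sub-cases, using hypothesis~\ref{hyp_e}. If $v \in S_{\rm r}$, then Lemma~\ref{canisos}(i) shows $R\Gamma_f(k_v, T_{p,F}(A))$ is acyclic, so $\chi^v$ equals $\delta_{G,p}$ applied to the reduced norm of the differential $1 - \Fr_v^{-1}$ on $T_{p,F}(A)^{I_v}$; unwinding the defining formula for the non-ramified characteristic $u_v$ then matches this with the $u_v^{-d}$-twist present in $\tau^*(F/k)^d$. If $v \in S_{\rm b}$, then $v$ is unramified, the rational cohomology of $R\Gamma_f(k_v,T_{p,F}(A))$ vanishes, and the standard unramified Euler-characteristic computation together with hypothesis~\ref{hyp_b} yields $\chi^v = \delta_{G,p}(L_v(A, F/k))$. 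For $v \in S_p$, hypothesis~\ref{hyp_h} forces $F_w/\QQ_p$ to be unramified, hence the Bloch--Kato exponential~\eqref{classical exp} respects the relevant integral lattices up to a crystalline-Frobenius correction; a character-by-character analysis via the Fontaine--Messing comparison identifies $\chi^v$ with $L_v(A,F/k)$ times the local unramified Galois Gauss sum factor contributing to $\tau^*(F/k)^d$.

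Assembly uses the global--local factorisation of the Artin--Gauss sum, namely that $\tau(\QQ, \Ind_k^\QQ \psi)$ equals the product over all non-archimedean places of the local $\epsilon$-constants, together with the $u_v$-twists at ramified places built into the definition of $\tau^*(F/k)$; combining all contributions yields the displayed formula. \emph{The main obstacle} is the $p$-adic step: even under the simplification afforded by~\ref{hyp_h}, identifying the image in $K_0\bigl(\ZZ_p[G], \CC_p[G]\bigr)$ of the exponential trivialisation with the prescribed combination of $L_v$-factors and local Galois Gauss sums requires a delicate character-by-character argument using crystalline theory and integral comparisons, and it is precisely the removal of this hypothesis that is deferred to future work.
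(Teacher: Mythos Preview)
There is a genuine gap in your very first step. The complex $C^{{\rm loc},\bullet}_{A,F}$ does decompose as a direct sum over $v\in S_\infty\cup S\cup S_p$, but the trivialisation $\lambda^{{\rm exp},j}_{A,F}$ does \emph{not} admit a ``parallel decomposition'' into local pieces. Indeed $H^0(C^{{\rm loc},\bullet}_{A,F})$ is supported entirely at the archimedean places while, rationally, $H^1(C^{{\rm loc},\bullet}_{A,F})$ is supported entirely at the $p$-adic places (the $S_{\rm b}$-pieces are torsion and the $S_{\rm r}$-pieces are acyclic). Thus $\lambda^{{\rm exp},j}_{A,F}$ is an isomorphism \emph{from} an archimedean space \emph{to} a $p$-adic space and is in no sense block-diagonal; an archimedean summand alone has $H^1=0$ but $H^0\neq 0$, so there is no well-defined ``$\chi^v$'' for $v\in S_\infty$. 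Your later claims inherit this defect: the acyclic $S_{\rm r}$-pieces contribute $0$ (not a $u_v^{-d}$-term), and there is no ``local Galois--Gauss sum factor'' attached to $v\in S_p$.

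The paper's argument repairs this as follows. After stripping off the torsion $S_{\rm b}$-summands (which do give $\prod_{v\in S_{\rm b}}L_v(A,F/k)$), one introduces an auxiliary \emph{free} $\ZZ_p[G]$-module $J_{A,F_p}=\bigoplus_{v\in S_p}\mathcal{O}_{F_v}\otimes_{\mathcal{O}_{k_v}}\Hom_{\mathcal{O}_{k_v}}(H^0(\mathcal{A}^t_v,\Omega^1_{\mathcal{A}^t_v}),\mathcal{O}_{k_v})$ and factors the trivialisation as $\mu_2\circ\mu_1$ through $\CC_p\cdot J_{A,F_p}$. The $p$-adic step $\mu_2$ (Lemma~\ref{fontainemessing}) then contributes exactly $\prod_{v\in S_p}L_v(A,F/k)$ via the Fontaine--Messing sequence, with no Gauss-sum term. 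The archimedean step $\mu_1$ (Lemma~\ref{tricky}) is computed by an explicit matrix calculation and contributes $w_\infty(F/k)^d\cdot\Omega(A,F/k)$ together with a \emph{global} term $-d\cdot(\mathcal{O}_{F,p},\pi_F,Y_{F,p})$; this last element is then identified with $-d\cdot\delta_{G,p}(j_*(\tau^*(F/k)))$ by invoking the results of Bley--Burns \cite{bleyburns}. The Gauss sum therefore enters all at once through this integral normal-basis term, not via any product of local $\epsilon$-constants, and this identification is the step that genuinely requires external input.
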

\begin{proof}
  We set $H^{F/k}_{A,v}:= H^0\bigl(k_v,Y_{v,F}\otimes_{\ZZ}H_1(\sigma_v(A^t)(\CC),\ZZ)\bigr)$ for each $v$ in $S_\infty$ and then also $H^{F/k}_{A} := \bigoplus_{v \in S_\infty}H^{F/k}_{A,v}$ and $\tilde C^{{\rm loc},\bullet}_{A,F} := H^{F/k}_{A,p}[0] \oplus A^t(F_p)^{\wedge}_p[-1]$. Then, as $p$ is odd and $R\Gamma_f\bigl(k_v,T_{p,F}(A)\bigr)$ is acyclic for all $v$ in $S_{\rm r}$ (by Lemma~\ref{canisos}(i)), the comparison isomorphisms~\eqref{correct comp} induce a natural isomorphism
\[ \kappa: C^{{\rm loc},\bullet}_{A,F} \cong \tilde C^{{\rm loc},\bullet}_{A,F} \oplus \bigoplus_{v \in S_{\rm b}} H^1_f\bigl(k_v,T_{p,F}(A)\bigr)[-1]\]
in $D^{\rm p}\bigl(\ZZ_p[G]\bigr)$ and hence an equality in $K_0\bigl(\ZZ_p[G],\CC_p[G]\bigr)$
\begin{align}\label{first}
    \chi_j^{\rm loc}(A,F/k) -\chi_{G,p}\bigl(\tilde C^{{\rm loc},\bullet}_{A,F},\tilde\lambda^{\rm exp}_{A,F}\bigr) &= \sum_{v\in S_{\rm b}} \chi_{G,p}\Bigl(H^1_f\bigl(k_v,T_{p,F}(A)\bigr)[-1],\,0\Bigr)\\
  &=  \delta_{G,p}\Bigl(\prod_{v\in S_{\rm b}} L_v(A,F/k)\Bigr).\notag\end{align}
Here we set $\tilde\lambda^{\rm exp}_{A,F} := \bigl(\QQ_p\otimes_{\ZZ_p}H^1(\kappa)\bigr)\circ\lambda^{\rm exp}_{A,F}\circ \bigl(\QQ_p\otimes_{\ZZ_p}H^0(\kappa)\bigr)^{-1}$ and the second equality follows by combining for each $v$ in $S_{\rm b}$ the explicit definition of the term $L_v(A,F/k)$ together with the facts that there is an exact sequence of $\ZZ_p[G]$-modules
 \[
  \xymatrix@1@C+3.5ex{  0 \ar[r] & T_{p,F}(A)^{I_{v}} \ar[r]^{1- \Fr_v^{-1}} & T_{p,F}(A)^{I_{v}} \ar[r] & H^1_f\bigl(k_v,T_{p,F}(A)\bigr )\ar[r] & 0},
 \]
and that $T_{p,F}(A)^{I_{v}}$ is a projective $\ZZ_p[G]$-module (as a consequence of hypothesis~\ref{hyp_e}).

We next note that $\tilde\lambda^{\rm exp}_{A,F}:=  \mu_2\circ \mu_1$, where $\mu_1$ is the second isomorphism that occurs in the definition of $\lambda_{A,F}^{{\rm exp},j}$ and $\mu_2 = \bigoplus_{v \in S_p}\mu_2^v$ where each $\mu_2^v$ is induced by the classical exponential map~\eqref{classical exp}.

For each $v$ in $S_p$ we set $\mathcal{O}_{F_v} := \mathcal{O}_F\otimes_{\mathcal{O}_k}\mathcal{O}_{k_v}$. Then hypothesis~\ref{hyp_h} implies that each such place $v$ is unramified in $F_w/k_v$ and hence, by Noether's Theorem, that the $\ZZ_p[G]$-module $\mathcal{O}_{F_v} \cong \prod_{w'\mid v}\mathcal{O}_{F_{w'}}$ is free. In particular, the $\ZZ_p[G]$-module
\[ J_{A,F_p} := \bigoplus_{v \in S_p}\mathcal{O}_{F_v}\otimes_{\mathcal{O}_{k_v}} \Hom_{\mathcal{O}_{k_v}} \Bigl( H^0 \bigl(\mathcal{A}^t_{v},\Omega^1_{\mathcal{A}^t_v}\bigr), \mathcal{O}_{k_v}\Bigr),\]
is free and so in $K_0\bigl(\ZZ_p[G],\CC_p[G]\bigr)$ one has an equality
\begin{multline*}\label{second}
  \chi_{G,p}\bigl(\tilde C^{{\rm loc},\bullet}_{A,F},\tilde\lambda^{\rm exp}_{A,F}\bigr)\\
= \chi_{G,p}\bigl(H^{F/k}_{A,p}[0]\oplus J_{A,F_p}[-1],\mu_1\bigr) +
\chi_{G,p}\bigl(J_{A,F_p}[0]\oplus A^t(F_p)^{\wedge}_p[-1],\mu_2\bigr).
\end{multline*}
Combining this equality with~\eqref{first} and the explicit computations in Lemmas~\ref{fontainemessing} and~\ref{tricky} below one finds that $\chi_j^{\rm loc}(A,F/k)$ is equal to
\[
 \delta_{G,p}\Bigl(j_*(w_\infty(F/k)^{d}\Omega(A,F/k))\prod_{v \in S_{\rm b}\cup S_{p}}  L_v(A,F/k)\Bigr) - d
 \cdot (\mathcal{O}_{F,p},\pi_{F},Y_{F,p}).
\]
 Here we have set $Y_{F} := \prod_{\Sigma(F)}\ZZ$, endowed with its natural action of $G$, and written $\pi_F: \CC_p\otimes_{\QQ_p} F_p \cong \CC_p\otimes_{\ZZ_p}Y_{F,p}$ for the natural isomorphism. To deduce the claimed result from this expression it suffices to show $\bigl(\mathcal{O}_{F,p},\pi_F,Y_{F,p}\bigr)$ is equal to $\delta_{G,p}\bigl(j_*(\tau^*(F/k))\bigr)$ and, since no $p$-adic place of $k$ ramifies in $F/k$, this follows directly from equation~(34), Proposition~7.1 and Corollary~7.6 in~\cite{bleyburns}.
\end{proof}

\begin{lemma} \label{fontainemessing}
 If at each place $v$ in $S_p$  the extension $F_w/\QQ_p$ is unramified and $A$ has good reduction, then one has
$$
\chi_{G,p}\Bigl(J_{A,F_p}[0]\oplus A^t(F_p)^{\wedge}_p[-1],\mu_{2}\Bigr) = \delta_{G,p}\Bigl(\prod_{v \in S_p}L_v(A,F/k)\Bigr).
$$
\end{lemma}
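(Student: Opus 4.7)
The plan is to decompose the Euler characteristic as a sum over $p$-adic places, to reduce each summand to a purely local computation at a chosen place of $F$ by Shapiro-type arguments, and then to identify the local contribution with the corresponding Euler factor using the integral formal logarithm in the unramified good-reduction setting. Concretely, since $C^{{\rm loc},\bullet}_{A,F}$ splits by place and $\mu_2 = \bigoplus_{v \in S_p}\mu_2^v$, additivity of $\chi_{G,p}$ immediately reduces the statement to proving, for each single $v \in S_p$, the equality
\[
\chi_{G,p}\bigl(J_{A,F_v}[0] \oplus A^t(F_v)^{\wedge}_p[-1],\,\mu_2^v\bigr) = \delta_{G,p}\bigl(L_v(A,F/k)\bigr).
\]

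For a fixed $v \in S_p$ I would choose a place $w$ of $F$ above $v$ and exploit that $J_{A,F_v}$ and $A^t(F_v)^{\wedge}_p$ are induced up from their counterparts $J_{A,F_w}$ and $A^t(F_w)^{\wedge}_p$ at $w$, with $\mu_2^v$ equal to the induction of the local exponential $\exp_w$. The standard compatibility of refined Euler characteristics and of reduced norms under induction identifies both sides of the displayed equality with the images under the natural map $K_0(\ZZ_p[G_w],\CC_p[G_w]) \to K_0(\ZZ_p[G],\CC_p[G])$ of their $G_w$-equivariant counterparts. It therefore suffices to prove the purely local assertion
\[
\chi_{G_w,p}\bigl(J_{A,F_w}[0] \oplus A^t(F_w)^{\wedge}_p[-1],\,\exp_w\bigr) = \delta_{G_w,p}\bigl(\Nrd_{\QQ_p[G_w]}(1 - \varphi_w \mid D_{\mathrm{cr}}(V_p(A^t)))\bigr).
\]

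For this local equality I would combine two standard inputs. First, since $F_w/\QQ_p$ is unramified and $A^t$ has good reduction, the integral formal logarithm provides a $\ZZ_p[G_w]$-equivariant isomorphism $\log\colon \hat A^t(\frakm_{F_w}) \xrightarrow{\sim} J_{A,F_w}$, and by Noether's theorem $\mathcal{O}_{F_w}$ is $\ZZ_p[G_w]$-free so both sides are finitely generated free $\ZZ_p[G_w]$-modules. Under this identification, $\exp_w$ becomes the natural inclusion $\hat A^t(\frakm_{F_w}) \hookrightarrow A^t(F_w)^{\wedge}_p$, whose cokernel is canonically the finite group $A^t(\kappa_w)[p^\infty]$ via the reduction map. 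Second, good reduction together with Néron--Ogg--Shafarevich yields an exact sequence of $\ZZ_p[G_w]$-modules
\[
0 \longrightarrow T_p(A^t) \xrightarrow{1 - \Fr_w^{-1}} T_p(A^t) \longrightarrow A^t(\kappa_w)[p^\infty] \longrightarrow 0
\]
with $T_p(A^t)$ projective as a $\ZZ_p[G_w]$-module. Applying the defining formula for the refined Euler characteristic to this resolution expresses the class of $A^t(\kappa_w)[p^\infty]$ as $\delta_{G_w,p}(\Nrd_{\QQ_p[G_w]}(1-\Fr_w^{-1} \mid V_p(A^t)))$, and the well-known agreement of the characteristic polynomials of $\varphi_w$ on $D_{\mathrm{cr}}(V_p(A^t)|_{G_{F_w}})$ and of $\Fr_w^{-1}$ on $V_p(A^t)$ in the unramified good-reduction case completes the identification.

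The main obstacle will be bookkeeping equivariance throughout: the Shapiro-type reduction, the $\ZZ_p[G_w]$-linearity of the integral formal logarithm, and the precise matching of crystalline and geometric Frobenius determinants under the decomposition $V_{p,F}(A)|_{G_{k_v}} \cong \bigoplus_{w \mid v} \mathrm{Ind}_{G_{F_w}}^{G_{k_v}} V_p(A)$ must all be verified coherently. The decisive role of hypothesis \ref{hyp_h} is that it simultaneously makes $\mathcal{O}_{F_w}$ a free $\ZZ_p[G_w]$-module and forces the integral formal logarithm to be an isomorphism, which is precisely what renders these compatibilities tractable and allows the local computation to proceed cleanly.
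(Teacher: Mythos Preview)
Your reduction to a single $v\in S_p$ and the Shapiro step to a chosen place $w$ are fine. The gap is in the local computation itself, where you implicitly treat the $p$-adic place as if it were an $\ell$-adic one.

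First, the formal logarithm in the unramified case gives an isomorphism $\hat A^t(\frakm_{F_w})\xrightarrow{\sim}\frakm_{F_w}\cdot J_{A,F_w}$, not onto $J_{A,F_w}$; so identifying $\mu_2^v$ with the inclusion $\hat A^t(\frakm_{F_w})\hookrightarrow A^t(F_w)^{\wedge}_p$ is off by the quotient $J_{A,F_w}/\frakm_{F_w}J_{A,F_w}\cong\kappa_w^{d}$, and the cokernel of $\mu_2^v$ is \emph{not} $A^t(\kappa_w)[p^\infty]$.

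More fundamentally, the sequence $0\to T_p(A^t)\xrightarrow{1-\Fr_w^{-1}}T_p(A^t)\to A^t(\kappa_w)[p^\infty]\to 0$ and the asserted equality of characteristic polynomials of $\varphi_w$ on $D_{\rm cr}$ and of $\Fr_w^{-1}$ on $V_p(A^t)$ are not well-posed: N\'eron--Ogg--Shafarevich concerns $\ell$-adic Tate modules with $\ell\neq p$, whereas here $v\mid p$ and inertia acts nontrivially on $T_p(A^t)$, so there is no Frobenius endomorphism of $T_p(A^t)$ to speak of. This is exactly why one needs $p$-adic Hodge theory at $p$-adic places. The paper's argument supplies the correct substitute: using Fontaine--Messing one has a free $\ZZ_p[G]$-lattice $\mathcal D_v=D_{{\rm cr},v}(T_{p,F}(A))$ of rank $2d$ and the Bloch--Kato short exact sequence of perfect complexes built from $(1-\varphi_v,\pi)\colon \mathcal D_v\to \mathcal D_v\oplus \mathcal D_v/F^0$, whose long exact cohomology sequence identifies $\mu_2^v$ with the inclusion $J_{A,F_v}\cong \mathcal D_v/F^0\hookrightarrow H^1_f\cong A^t(F_v)^\wedge_p$ and shows directly that $\cok(\mu_2^v)=\cok(1-\varphi_v\mid\mathcal D_v)$. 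This yields $\delta_{G,p}(L_v(A,F/k))$ on the nose, without any appeal to a Frobenius on $V_p$.
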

\begin{proof}
 We fix $v$ in $S_p$ and write $\mathcal{A}^t_{v,F}$ for the N\'eron model $\mathcal{A}^t_v\times_{\mathcal{O}_{k_v}}\mathcal{O}_{F_v}$ of $A^t$ over $\mathcal{O}_{F_v} := \prod_{w'\mid v}\mathcal{O}_{F_w'}$.
Then $\Hom_{\mathcal{O}_{F_v}}\bigl(H^0(\mathcal{A}^t_{v,F},\Omega^1_{\mathcal{A}^t_{v,F}}\bigr), \mathcal{O}_{F_v})$ is naturally isomorphic to the free  $\ZZ_p[G]$-module $J_{A,F_v} := \mathcal{O}_{F_v}\otimes_{\mathcal{O}_{k_v}}\Hom_{\mathcal{O}_{k_v}} \bigl(H^0(\mathcal{A}^t_{v},\Omega^1_{\mathcal{A}^t_v}), \mathcal{O}_{k_v}\bigr)$.

In addition, the stated hypotheses on $v$ imply that there exists a full free $\ZZ_p[G]$-submodule 
\[ \mathcal{D}_v := D_{{\rm cr},v}\bigl(T_{p,F}(A)\bigr)\cong\mathcal{O}_{F_v}\otimes_{\mathcal{O}_{k_v}} D_{{\rm cr},v}\bigl(T_p(A)\bigr)\]
of $D_{{\rm cr},v}(V_{p,F}(A))$,
where $D_{{\rm cr},v}$ is the quasi-inverse to the functor of Fontaine and Lafaille used by Niziol in~\cite{N}, and the theory of Fontaine and Messing~\cite{fm} implies that the canonical comparison isomorphism 
\[ \Hom_{F_v}\bigl(H^0(A^t_{F_v},\Omega^1_{A^t_{F_v}}),F_v\bigr) \cong D_{{\rm dR},v}\bigl(V_{p,F}(A)\bigr)/F^0\]
maps $J_{A,F_v} $ to $\mathcal{D}_v/F^0\mathcal{D}_v$ (see \S 5 in \cite{bk}). 

In this case it is also shown in~\cite{bk} that there is a natural short exact sequence of perfect complexes of $\ZZ_p[G]$-modules (with vertical differentials)
\begin{equation*}\label{f-m-seq}
\xymatrix@C+2ex{ 0 \ar[r] &
 \mathcal{D}_v/F^0\mathcal{D}_v \ar[r]^(0.43){(0,\id)} &
 \mathcal{D}_v\oplus\mathcal{D}_v/F^0\mathcal{D}_v \ar[r]^(0.65){(\id,0)} &
 \mathcal{D}_v \ar[r] & 0 \\
 0\ar[r] &
 0\ar[r]\ar[u]&
 \mathcal{D}_v \ar[r]^{\id} \ar[u]^{(1-\varphi_v,\pi)} &
 \mathcal{D}_v \ar[r] \ar[u]^{1 - \varphi_v}  &
0.}
\end{equation*}
Here the term $\mathcal{D}_v/F^0\mathcal{D}_v$ in the first complex occurs in degree $1$ and $\pi$ is the tautological projection. Further, the exponential map of Bloch and Kato
 maps the cohomology in degree $1$ of the second complex bijectively to $H^1_f(k_v,T_{p,F}(A)) = A^t(F_v)^{\wedge}_p$ and the
differential $1-\varphi_v$ of the third complex is injective. For a proof of all these claims see Lemma 4.5 (together with Example 3.11) in loc. cit..
The long exact sequence of cohomology of the above exact sequence thus gives rise to a short exact sequence of $\ZZ_p[G]$-modules $0 \to J_{A,F_v} \to A^t(F_v)^{\wedge}_p \to \cok(1-\varphi_v)\to  0$, in which (following~\cite[Example 3.11]{bk}) the second arrow is equal to $\mu_2^v$. This sequence then implies that the term $\chi_{G,p}\bigl(J_{A,F_v}[0]\oplus A^t(F_v)^{\wedge}_p[-1],\mu^v_{2}\bigr)$ is equal to
\[ \chi_{G,p}\bigl(\cok(1-\varphi_v)[-1],0\bigr) = \partial_{G,p}\bigl((1-\varphi_v\mid D_{{\rm cr},v}(V_{p,F}(A)))\bigr) = \delta_{G,p}\bigl(L_v(A,F/k)\bigr)
\]
and by summing over all places $v$ in $S_p$ this implies the claimed equality.
\end{proof}

For each $\sigma\in \Sigma(k)$ we fix an embedding $\hat \sigma \in \Sigma_\sigma(F)$.

\begin{lemma}\label{tricky}
 In $K_0\bigl(\ZZ_p[G],\CC_p[G]\bigr)$ one has
\begin{multline*}\label{desired period}
  \chi_{G,p} \Bigl(H^{F/k}_{A,p}[0]\oplus J_{A,F_p}[-1],\,\mu_1\Bigr)\\ = -d\cdot(\mathcal{O}_{F,p},\pi_{F},Y_{F,p})+\delta_{G,p}\bigl(j_*(w_\infty(F/k)^d\,\Omega(A,F/k))\bigr).
\end{multline*}
\end{lemma}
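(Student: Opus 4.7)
The plan is to fix explicit bases that trivialize both $J_{A,F_p}$ and $H^{F/k}_{A,p}$ as $\ZZ_p[G]$-modules, and then to compute the resulting reduced norm of $\mu_1$ character by character. Concretely, the basis $\{\omega_b\}_{b\in[d]}$ fixed in \S\ref{archi_subsubsection} induces, under hypothesis~\ref{hyp_h}, an isomorphism of $\ZZ_p[G]$-modules $J_{A,F_p}\cong\mathcal{O}_{F,p}^d$ via the dual basis at each $v\in S_p$. Simultaneously, fix the Betti bases $\{\gamma^{\pm}_{v,a}\}_{a\in[d]}$ for real $v$ and $\{\gamma_{v,a}\}_{a\in[2d]}$ for complex $v$ from the same section. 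Since $p$ is odd, the idempotents $(1\pm c_v)/2$ associated with the complex conjugation $c_v$ at each real place lie in $\ZZ_p[G]$, so $Y_{v,F,p}$ decomposes as $Y_{v,F,p}^+\oplus Y_{v,F,p}^-$ as $\ZZ_p[G]$-modules. Combining this with the trivialization of $H_1^\pm\cong\ZZ_p^d$ coming from $\{\gamma^\pm_{v,a}\}$ gives $H^{F/k}_{A,v,p}\cong(Y_{v,F,p}^+)^d\oplus(Y_{v,F,p}^-)^d\cong Y_{v,F,p}^d$ for real $v$; similarly $H^{F/k}_{A,v,p}\cong Y_{v,F,p}^{2d}$ for complex $v$. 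Since each complex place of $k$ gives rise to two embeddings $k\hookrightarrow\CC$, these summands assemble into a $\ZZ_p[G]$-isomorphism $H^{F/k}_{A,p}\cong Y_{F,p}^d$.

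Under the two resulting trivializations, the element $\chi_{G,p}\bigl(H^{F/k}_{A,p}[0]\oplus J_{A,F_p}[-1],\mu_1\bigr)$ is represented by the triple $\bigl(Y_{F,p}^d,\mu_1',\mathcal{O}_{F,p}^d\bigr)$ in $K_0\bigl(\ZZ_p[G],\CC_p[G]\bigr)$, where $\mu_1'$ is induced by $\mu_1$ and the chosen bases. Concatenating with $d\cdot(\mathcal{O}_{F,p},\pi_F,Y_{F,p})=(\mathcal{O}_{F,p}^d,\pi_F^d,Y_{F,p}^d)$ and using the basic additivity relations in the relative $K$-group, the lemma reduces to the equality
\[
\Nrd_{\CC_p[G]}\bigl(\pi_F^d\circ\mu_1'\bigr)=j_*\bigl(w_\infty(F/k)^d\,\Omega(A,F/k)\bigr)
\]
in $\zeta\bigl(\CC_p[G]\bigr)^\times$, with the left-hand side the reduced norm of $\pi_F^d\circ\mu_1'$ viewed as an automorphism of $\CC_p\otimes_{\ZZ_p}Y_{F,p}^d$.

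The remaining step is to compute this reduced norm character by character. For each $\psi\in\Ir(G)$, the $\psi$-component decomposes into a product of local contributions indexed by $v\in S_\infty$. At a real $v$ the relevant matrices are the period matrices $(\int_{\gamma^+_{v,a}}\omega_b)_{a,b}$ and $(\int_{\gamma^-_{v,a}}\omega_b)_{a,b}$, appearing with multiplicities $\psi_v^+(1)$ and $\psi_v^-(1)$ respectively; up to sign and argument, their determinants contribute $(\Omega_v^+(A))^{\psi_v^+(1)}(\Omega_v^-(A))^{\psi_v^-(1)}=\Omega_v(A,\psi)$. At a complex $v$ the analogous matrix---involving $\int_{\gamma_{v,a}}\omega_b$ together with its complex conjugate---contributes $\Omega_v(A)^{\psi(1)}=\Omega_v(A,\psi)$. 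The residual sign and imaginary factors that arise because the $\Omega_v^\pm(A)$ and $\Omega_v(A)$ are defined as absolute values of determinants assemble, over all $v\in S_\infty$, into exactly $w_\infty(\psi)^d$. Multiplying by $e_\psi$ and summing over $\Ir(G)$ then recovers $j_*\bigl(w_\infty(F/k)^d\,\Omega(A,F/k)\bigr)$, as required.

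The main technical obstacle lies in this last step, and specifically in the careful bookkeeping of the $G\times G_v$-equivariance underlying the identifications of $H^{F/k}_{A,v,p}$, together with the verification that the residual sign and imaginary factors from the period determinants reassemble precisely into the modified character $w_\infty$ of \S\ref{archi_subsubsection} (rather than into some alternative combination). The other steps reduce to more or less standard manipulations in relative algebraic $K$-theory, with hypothesis~\ref{hyp_h} entering only in the identification of $J_{A,F_p}$ and the oddness of $p$ entering in the decomposition of $Y_{v,F,p}$.
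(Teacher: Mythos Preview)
Your overall strategy coincides with the paper's: trivialise both $J_{A,F_p}$ and $H^{F/k}_{A,p}$ via the fixed differentials and Betti bases, peel off $d$ copies of the class $(\mathcal{O}_{F,p},\pi_F,Y_{F,p})$, and then compute the remaining automorphism character by character in terms of the period integrals. The reduction to $\bigl(Y_{F,p}^d,\pi_F^d\circ\mu_1',Y_{F,p}^d\bigr)$ is correct.

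There is, however, a genuine gap in your final step. You assert that the lemma reduces to an \emph{equality} $\Nrd_{\CC_p[G]}(\pi_F^d\circ\mu_1')=j_*\bigl(w_\infty(F/k)^d\,\Omega(A,F/k)\bigr)$ in $\zeta(\CC_p[G])^\times$, and that the ``residual sign and imaginary factors \ldots\ assemble into exactly $w_\infty(\psi)^d$''. This is not true in general. At a real place $v$ one has $\det(\int_{\gamma^+_{v,a}}\omega_b)_{a,b}=\varpi_v^+\cdot\Omega_v^+(A)$ and $\det(\int_{\gamma^-_{v,a}}\omega_b)_{a,b}=\varpi_v^-\cdot i^d\cdot\Omega_v^-(A)$ with $\varpi_v^\pm\in\{\pm1\}$ depending on the orientations of the chosen bases (and similarly $\varpi_v\in\{\pm1\}$ at complex $v$). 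Only the powers of $i$ reassemble into $w_\infty(\psi)^d$; the signs $\varpi_v^\pm,\varpi_v$ survive in the reduced norm as the factor $\sum_\psi e_\psi\prod_v(\varpi_v^+)^{\psi_v^+(1)}(\varpi_v^-)^{\psi_v^-(1)}\prod_v\varpi_v^{\psi(1)}$.

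The lemma is an equality in $K_0(\ZZ_p[G],\CC_p[G])$, not in $\zeta(\CC_p[G])^\times$, so what must actually be shown is that this sign factor lies in $\ker(\delta_{G,p})$. This is a separate (and not entirely formal) step: one identifies, for each real $v$, the element $\sum_\psi e_\psi(\varpi_v^+)^{\psi_v^+(1)}(\varpi_v^-)^{\psi_v^-(1)}$ as $\Nrd_{\QQ_p[G]}\bigl(\varpi_v^+(1+\tau_v)/2+\varpi_v^-(1-\tau_v)/2\bigr)$, which is the reduced norm of a unit in $\ZZ_p[G]$ (using that $p$ is odd), and similarly $\sum_\psi e_\psi\varpi_v^{\psi(1)}=\Nrd_{\QQ_p[G]}(\varpi_v)$ for complex $v$. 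Without this argument, your reduction to an exact equality of reduced norms is too strong and the proof is incomplete.
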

\begin{proof}
 For each place $v$ in $S_\RR$ we define $\tau_v$ in $G$ via the equality $\tau_v(\hat\sigma_v) = c\circ \hat\sigma_v$. For each integer $a$ in $[d]$ we then define an element
$$
x_{v,a}:=  \tfrac{1}{2} (1+ \tau_v)\hat\sigma_v\otimes_{\ZZ}\gamma_{v,a}^{+} + \tfrac{1}{2}(1- \tau_v)\hat\sigma_v\otimes_{\ZZ}\gamma_{v,a}^{-}
$$
and we note that, since $p$ is odd, the set $\mathcal{X}_v := \{x_{v,a}\}_{a\in [d]}$ is a $\ZZ_p[G]$-basis of $H^{F/k}_{A,v,p}$.

For $v$ in $S_\CC$ and $a$ in $[2d]$ we set $x_{v,a} := \hat\sigma_v\otimes_\ZZ\gamma_{v,a}$ and note that $\mathcal{X}_v:= \{x_{v,a}\}_{a\in [2d]}$ is a $\ZZ_p[G]$-basis of $H^{F/k}_{A,v,p}$.

We next set $n := [k:\QQ] = |S_\RR| + 2|S_\CC|$ and fix an $\mathcal{O}_{k,p}[G]$-generator $Z$ of $\mathcal{O}_{F_p}$ and a $\ZZ_p$-basis $\{z_m\}_{m \in [n]}$ of $\mathcal{O}_{k,p}$ and set $t_{m,b} := Zz_m\otimes_{k}\omega_b^*$ with $\omega_b^*$ the element of $\Hom_{k} \bigl(H^0(A^t,\Omega^1_{A^t}),k\bigr)$ that is dual to $\omega_b$. Then the set $\mathcal{T}:= \{t_{m,b}\}_{(m,b)\in [n]\times [d]}$ is a $\ZZ_p[G]$-basis of $J_{A,F_p}$.

The key to our argument is to compute the matrix of $\mu_1$ with respect to the bases $\mathcal{T}$ and $\mathcal{X}:= \bigcup_{v\in S_\infty}\mathcal{X}_v$ of $\RR\otimes_{\QQ}\bigl(F\otimes_k \Hom_k(H^0(A^t,\Omega_{A^t}),k)\bigr)$ and $\RR\otimes_\ZZ H^{F/k}_{A}$.
To do this we find it convenient to introduce an auxiliary basis. Thus, for $v$ in $S_\RR$ we set $\mathcal{Y}_v := \{y_{v,b}\}_{b \in [d]}$ with $y_{v,b}:= \hat\sigma_v\otimes_{\ZZ}\sigma_{v,*}(\omega^*_b)$, and for $v$ in $S_\CC$ we set $\mathcal{Y}_v := \{y_{v,b}\}_{b \in [2d]}$ with $y_{v,b}:= \hat\sigma_v\otimes_\ZZ\sigma_{v,*}(\omega^*_b)$ if $b \in [d]$ and $y_{v,b}:= \hat\sigma_v\otimes_\ZZ c\bigl(\sigma_{v,*}(\omega^*_b)\bigr)$ if $b \in [2d]\setminus [d]$.
Then $\mathcal{Y} := \cup_{v \in S_\infty}\mathcal{Y}_v$ is an $\RR[G]$-basis of $\bigoplus_{v \in S_\infty}k_vY_{v,F}\otimes_{k_v}\bigl(k_v\otimes_{k,\sigma_v}\Hom_k(H^0(A^t,\Omega^1_{A^t}),k)\bigr)$ and for each index $a$ one has
\begin{equation*}\label{first period}
 \pi_v(x_{v,a}) =
  \begin{cases}
     \sum_{j =1}^{d}\tfrac{1}{2}(\Omega_{v,a,j}^+ (1+ \tau_v) + \Omega_{v,a,j}^- (1- \tau_v))y_{v,j}, &\text{if $v \in S_\RR$},\\
     \sum_{j =1}^{2d}\Omega_{v,a,j}\,y_{v,j}, &\text{if $v \in S_\CC$}
  \end{cases}
\end{equation*}
with $\Omega_{v,a,j}^{\pm} := \int_{\gamma_{v,a}^{\pm}} \omega_j$ and $\Omega_{v,a,j} := \int_{\gamma_{v,a}} \omega_j$ if $a \in [d]$ and $\Omega_{v,a,j} := c\bigl(\int_{\gamma_{v,a}} \omega_j\bigr)$ if $a \in [2d]\setminus [d]$. This formula implies that the matrix of $\mu_1$ with respect to the bases $\mathcal{X}$ and $\mathcal{T}$ is $M^{-1}\Phi$ where $\Phi = \diag((\Phi_v)_{v \in S_\infty})$ is a diagonal block matrix with blocks
\[
 \Phi_v :=
   \begin{cases}
       \bigl(\Omega_{v,a,j}^+(1+ \tau_v)/2 + \Omega_{v,a,j}^-(1- \tau_v)/2\bigr)_{(a,j)\in [d]\times[d]} &\text{if $v \in S_\RR$},\\
      \bigl(\Omega_{v,a,j}\bigr)_{(a,j)\in [2d]\times[2d]} &\text{if $v \in S_\CC$}
    \end{cases}
\]
and $M$ is the matrix in $\GL_{nd}\bigl(\CC[G]\bigr)$ that represents the isomorphism~\eqref{period decomp} with respect to the bases $\mathcal{T}$ and $\mathcal{Y}$ so that
\[
M_{(a,j),(v,\alpha)} =
  \begin{cases}
     \sigma_v(z_a)\,\sum_{g \in G}\hat\sigma_v\bigl( g(Z)\bigr)g^{-1} &\text{if $v \in S_\RR$ and $\alpha = j$},\\
     \sigma_v(z_a)\,\sum_{g \in G}\hat\sigma_v\bigl( g(Z)\bigr)g^{-1} &\text{if $v\in S_\CC$ and $\alpha = j$},\\
     c\circ\sigma_v(z_a)\,\sum_{g \in G} c\circ\hat\sigma_v\bigl( g(Z)\bigr)g^{-1} &\text{if $v\in S_\CC$ and $\alpha = j+d$,}\\
     0 &\text{otherwise.}
   \end{cases}
\]
Writing $\bigl[M^{-1}\Phi\bigr]$ for the class of $M^{-1}\Phi$ in $K_1\bigl(\CC_p[G]\bigr)$ one therefore has
\begin{multline}\label{almost period comp}
 \chi_{G,p}\Bigl(H^{F/k}_{A,p}[0] \oplus J_{A,F_p}[-1],\,\mu_1\Bigr)
 = \partial_{G,p}\bigl([M^{-1}\Phi]\bigr) \\
 = \delta_{G,p}\bigl(\Nrd_{\CC_p[G]}(M^{-1}\Phi)\bigr)
 = -\delta_{G,p}\bigl(\Nrd_{\CC_p[G]}(M)\bigr) + \delta_{G,p}\bigl(\Nrd_{\CC_p[G]}(\Phi)\bigr).
\end{multline}
Now $M$ is equal to the product $M_1M_2$ for the block matrices $M_1 = \bigl(\sigma(z_a)I_d\bigr)_{\sigma\in \Sigma(k),a \in [n]}$ and $M_2 = \diag \bigl( (\sum_{g \in G}\hat\sigma( g(Z))g^{-1}) I_d\bigr)_{\sigma \in \Sigma(k)}$ and so $\Nrd_{\CC[G]}(M)$ is equal to the product
\begin{align*}
 \Nrd_{\CC[G]}(M_1)\Nrd_{\CC[G]}(M_2) &= \Nrd_{\CC[G]} \bigl( (\sigma(z_a))_{\sigma,a}\bigr)^d
      \Nrd_{\CC[G]}\biggl(\diag\Bigl(\sum_{g \in G}\hat\sigma\bigl( g(Z)\bigr)g^{-1}\Bigr)_{\sigma}\biggr)^d\\
&= \Nrd_{\CC[G]}\biggl(\Bigl(\sigma(z_a)\sum_{g \in G}\hat\sigma\bigl( g(Z)\bigr)g^{-1}\Bigr)_{a,\sigma}\biggr)^d.
\end{align*}
In addition, $\bigl(\sigma(z_a)\sum_{g \in G}\hat\sigma( g(Z))g^{-1}\bigr)_{a,\sigma}$ is the matrix of the natural isomorphism $\pi_F: \CC_p\otimes_{\QQ_p} F_p \cong \CC_p\otimes_{\ZZ_p}Y_{F,p}$ with respect to the $\ZZ_p[G]$-bases $\{Zz_a\}_{a \in [n]}$ and $\{\hat \sigma\}_{\sigma \in \Sigma(k)}$ of $\mathcal{O}_{F,p}$ and $Y_{F,p}$ and so
\begin{equation}\label{norm resolvents}
  \delta_{G,p}\bigl(\Nrd_{\CC_p[G]}(M)\bigr)
   = d \cdot\partial_{G,p}\biggl(\biggl[ \Bigl(\sigma(z_a)\sum_{g \in G}\hat\sigma\bigl( g(Z)\bigr)g^{-1}\Bigr)_{a,\sigma}\biggr]\biggr) = d\cdot\!\bigl(\mathcal{O}_{F,p},\pi_F,Y_{F,p}\bigr).
\end{equation}
Next we note that $\Nrd_{\CC_p[G]}(\Phi) = \prod_{v \in S_\infty}\Nrd_{\CC_p[G]}(\Phi_v)$ and that for each $v$ in $S_\infty$ also $\Nrd_{\CC_p[G]}(\Phi_v) = \sum_{\psi \in \Ir(G)} e_\psi\det_{\CC_p}\bigl(\Phi_v\bigm\vert V_\psi^{d[k_v:\RR]}\bigr)$ with
\begin{align*}
    \det\nolimits_{\CC_p}\Bigl(\Phi_v\Bigm\vert V_\psi^{d\,[k_v:\RR]}\Bigr) &=
     \begin{cases}
        \det\bigl((\Omega^+_{v,a,j})_{a,j}\bigr)^{\psi^+_v(1)}\cdot \det\bigl((\Omega^-_{v,a,j})_{a,j}\bigr)^{\psi^-_v(1)} &\text{if $v \in S_\RR$},\\
        \det\bigl((\Omega_{v,a,j})_{a,j}\bigr)^{\psi(1)} &\text{if $v \in S_\CC$.}
     \end{cases}
\end{align*}
Now if $v$ is real, then $\det(\Omega^+_{v,a,j})_{a,j} = \varpi_v^+\cdot \Omega_v^+(A)$ and $\det(\Omega^-_{v,a,j})_{a,j} = \varpi_v^-\cdot i^d\cdot \Omega_v^-(A)$ with $\varpi_v^+$ and $\varpi_v^-$ in $\{\pm 1\}$, whilst if $v$ is complex, then $\det(\Omega_{v,a,j})_{a,j} = \varpi_v\cdot i^d\cdot \Omega_v(A)$ with $\varpi_v$ in $\{\pm 1\}$. Thus, since each of the terms
\[
 \sum_{\psi \in \Ir_p(G)} e_\psi (\varpi_v^+)^{\psi^+_v(1)} \, (\varpi^-_{v})^{\psi^-_v(1)} = \Nrd_{\QQ_p[G]}\bigl ( \varpi_v^+(1+\tau_v)/2 + \varpi_v^-(1-\tau_v)/2\bigr)
\]
if $v$ is real, and $\sum_{\psi \in \Ir_p(G)} e_\psi \, \varpi_v^{\psi(1)} = \Nrd_{\QQ_p[G]} ( \varpi_v)$ if $v$ is complex, belong to the kernel of $\delta_{G,p}$, we conclude that $\delta_{G,p}\bigl(\Nrd_{\CC_p[G]}(\Phi)\bigr)$ is equal to
\begin{align*}
 &\delta_{G,p}\biggl(\sum_{\psi\in \Ir(G)}e_\psi \prod_{v \in S_\RR}\Bigl(\Omega^+_v(A)^{\psi^+_v(1)}\bigl(i^d\cdot \Omega^-_v(A)\bigr)^{\psi^-_v(1)}\Bigr) \prod_{v \in S_\CC}\bigl(i^d\cdot \Omega_v(A)\bigr)^{\psi(1)} \biggr)\\
= &\delta_{G,p}\biggl( \Bigl(\sum_{\psi\in \Ir(G)} w_\infty(\psi)^d\,e_\psi\Bigr)\Bigl(\sum_{\psi\in \Ir(G)}\Omega(A,\psi)e_\psi\Bigr)\biggr)\\
= &\delta_{G,p}\Bigl(w_\infty(F/k)^d\cdot \Omega(A,F/k)\Bigr).
\end{align*}
This formula combines with~\eqref{almost period comp} and~\eqref{norm resolvents} to give the claimed formula for the term $\chi_{G,p}\bigl(H^{F/k}_{A,p}[0] \oplus J_{A,F_p}[-1],\,\mu_1\bigr)$.
\end{proof}
This completes the proof of Theorem~\ref{comp prop}.

\section{Congruences between derivatives}\label{congruences section}

We assume in the sequel that, in addition to the hypotheses~\ref{hyp_a}--\ref{hyp_h}, the following standard conjecture is also valid.
\begin{conditions}\setcounter{condone}{8}
  \item\label{hyp_i} For each finite set of places $\Sigma$ of $k$ and each character $\psi$ in $\Ir(G)$ the $\Sigma$-truncated $\psi$-twisted Hasse-Weil $L$-function $L_\Sigma(A,\psi,s)$ of $A$ has an analytic continuation to $s=1$ where it has a zero of order $r_\psi := \dim_\CC \bigl(V_\psi\otimes_{\ZZ}A(F)\bigr)^G$.
\end{conditions}
Here $G$ acts diagonally on the tensor product and so $r_\psi$ is equal to the multiplicity with which $\psi$, and hence also $\check\psi$, appears in the representation $\CC\otimes_{\ZZ} A(F)$.

For each $\psi$ in $\Ir(G)$ we then write $\Lstar_\Sigma(A,\psi,1)$ for the the coefficient of $(s-1)^{r_\psi}$ in the Taylor expansion at $s=1$ of  $L_\Sigma(A,\psi,s)$.

In this section we use the computations of~\S\ref{bloch-kato} to give a reinterpretation, under the hypotheses~\ref{hyp_a}--\ref{hyp_i}, of the appropriate case of the
equivariant Tamagawa number conjecture of~\cite[Conjecture 4]{bufl99} as a family of congruence relations between the leading terms $\Lstar_{S_{\rm r}}(A,\psi,1)$ as
$\psi$ varies over $\Ir(G)$.
We next discuss several consequences of this reinterpretation and then, motivated by the results of \cite{Selmerstr}, we specialise to the case that the ($p$-completed)
Mordell-Weil group $A(F)_p$ is a projective $\ZZ_p[G]$-module. In this case we prove results that will subsequently enable us in~\S\ref{special cases} to give some
important new theoretical and numerical verifications of~\cite[Conjecture 4]{bufl99}.

\subsection{The general case}\label{generalcase}
 If $A$ and $F$ satisfy the hypotheses~\ref{hyp_g} and~\ref{hyp_i}, then~\cite[Conjecture 4]{bufl99} for the pair $\bigl(h^1(A_{/F})(1),\ZZ[G]\bigr)$ asserts the validity of an equality in $K_0\bigl(\ZZ[G],\RR[G]\bigr)$. In such a case we shall say that the `eTNC$_p$ for $\bigl(h^1(A_{/F})(1),\ZZ[G]\bigr)$ is valid' if for every isomorphism of fields $j:\CC\cong \CC_p$ the predicted equality is valid after projection under the homomorphism $j_{G,*}$ defined in~\eqref{induced rkt}.

 If $p$ does not divide $|G|$, then the algebra $\ZZ_p[G]$ is regular and it is straightforward to use the techniques described in~\cite[\S1.7]{bufl95} to give an explicit interpretation of these projections. If $p$ divides $|G|$, however, then obtaining an explicit interpretation is in general very difficult (see, for example, the efforts made by Bley in~\cite{Bley, Bley2}).

 The following result is thus of some interest since, as we shall see later, it can be combined with the structure results obtained in \cite{Selmerstr} to show that under certain natural conditions one can explicitly interpret, and verify, the eTNC$_p$ for $\bigl(h^1(A_{/F})(1),\ZZ[G]\bigr)$ even if $G$ is non-abelian, $p$ divides $|G|$ and the rank of $A(F)$ is strictly positive.

For each non-archimedean place $v$ of $k$, we decompose the non-ramified characteristic $u_v$ as $\sum_{\psi\in \Ir(G)}u_{v,\psi}\cdot e_\psi$  with each  $u_{v,\psi}$ in $\CC^\times$. For each $\psi$ in $\Ir(G)$ we then define a modified global Galois-Gauss sum by setting
\begin{equation}\label{ggs def}
  \tau^*\bigl(\QQ,\Ind_k^\QQ\psi\bigr) := u_\psi \cdot \tau\bigl(\QQ,\Ind_k^\QQ\psi\bigr).
\end{equation}
with $u_\psi := \prod_{v \in S_{\rm r}^k} u_{v,\psi}$.

\begin{theorem}\label{equivalence theorem}
 Assume that $A$ and $F$ satisfy the hypotheses~\ref{hyp_a}--\ref{hyp_i}.
Then the element
\begin{equation*}
 \calLstar_{A,F/k} := \sum_{\psi \in \Ir(G)} e_\psi\, \frac{\Lstar_{S_{\rm r}}(A,\check{\psi},1)\cdot \tau^*\bigl(\QQ,\Ind_k^\QQ\psi\bigr)^{d}} {\Omega(A,\psi)\cdot w_\infty(\psi)^{d}}
\end{equation*}
belongs to $\zeta\bigl(\CC[G]\bigr)^\times$ and the eTNC$_p$ for $\bigl(h^1(A_{/F})(1),\ZZ[G]\bigr)$ is valid if and only if for every isomorphism of fields $j: \CC\cong \CC_p$ one has
\begin{equation}\label{etnc equiv}
  \delta_{G,p}\bigl(j_{*}(\calLstar_{A,F/k})\bigr) = \chi_j(A,F/k)
\end{equation}
\end{theorem}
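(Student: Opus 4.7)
The plan is to combine Proposition \ref{comparison lemma} and Theorem \ref{comp prop} with the formulation of the equivariant Tamagawa number conjecture of \cite[Conjecture 4]{bufl99} and then verify, character-by-character, that the resulting equality is equivalent to (\ref{etnc equiv}). Under the stated hypotheses (in particular \ref{hyp_i}), this conjecture for $(h^1(A_{/F})(1),\ZZ[G])$, after projection via the homomorphism $j_{G,*}$ defined in (\ref{induced rkt}), takes the shape
\begin{equation*}
R\Omega_j\bigl(h^1(A_{/F})(1),\ZZ[G]\bigr) = -\delta_{G,p}\bigl(j_*(L^{\rm eq})\bigr)
\end{equation*}
in $K_0(\ZZ_p[G],\CC_p[G])$, where $L^{\rm eq}\in\zeta(\CC[G])^\times$ denotes the (un-normalised) equivariant leading term element whose $\psi$-component is $\Lstar(A,\check\psi,1)$.

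The first step is to substitute the formula of Theorem \ref{comp prop} into the identity of Proposition \ref{comparison lemma}. Under hypotheses \ref{hyp_c}, \ref{hyp_e} and \ref{hyp_h}, the three sets $S_{\rm r}$, $S_{\rm b}$ and $S_p$ are pairwise disjoint, so that $(S\cup S_p)\setminus(S_{\rm b}\cup S_p) = S_{\rm r}$. The product $\prod_{v\in S_{\rm b}\cup S_p}L_v(A,F/k)$ that occurs in $\chi_j^{\rm loc}(A,F/k)$ therefore cancels all but the $S_{\rm r}$-contributions of the sum $\sum_{v\in S\cup S_p}\delta_{G,p}(L_v(A,F/k))$, yielding the identity
\begin{equation*}
R\Omega_j = -\chi_j(A,F/k) + \delta_{G,p}\Bigl(j_*\bigl(\tau^*(F/k)^d/(w_\infty(F/k)^d\cdot\Omega(A,F/k))\bigr)\cdot\prod_{v\in S_{\rm r}}L_v(A,F/k)\Bigr).
\end{equation*}
Equating this with the conjectural expression displayed above and rearranging, one finds that eTNC$_p$ is equivalent to the equality $\chi_j(A,F/k) = \delta_{G,p}\bigl(j_*(L^{\rm eq}\cdot\tau^*(F/k)^d/(w_\infty(F/k)^d\Omega(A,F/k)))\cdot\prod_{v\in S_{\rm r}}L_v(A,F/k)\bigr)$.

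The second step is to match the argument of $\delta_{G,p}$ on the right with $j_*(\calLstar_{A,F/k})$. After decomposing by the central idempotents $e_\psi$ this reduces to two elementary identities. First, the $\psi$-component of $\tau^*(F/k)$ coincides with the modified Galois-Gauss sum $\tau^*(\QQ,\Ind_k^\QQ\psi)$ defined in (\ref{ggs def}), as is seen immediately after expanding $u_v = \sum_\psi u_{v,\psi}\cdot e_\psi$. Second, the $\psi$-component of $\prod_{v\in S_{\rm r}}L_v(A,F/k)$ is the product over $v\in S_{\rm r}$ of the Euler factors of $L(A,\check\psi,s)$ at $v$ and $s=1$, so that $\Lstar_{S_{\rm r}}(A,\check\psi,1) = \Lstar(A,\check\psi,1)\cdot\prod_{v\in S_{\rm r}}L_v(A,F/k)_\psi$, using that these Euler factors do not vanish at $s=1$. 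Combining these identifications recovers precisely the $\psi$-component of $\calLstar_{A,F/k}$ stated in the theorem.

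The remaining claim $\calLstar_{A,F/k}\in\zeta(\CC[G])^\times$ reduces, via the description $\zeta(\CC[G]) = \prod_{\psi\in\Ir(G)}\CC\cdot e_\psi$, to the non-vanishing of each $\psi$-component, which is clear since leading terms of $L$-functions are by definition non-zero, as are the periods $\Omega(A,\psi)$, the roots of unity $w_\infty(\psi)$ and the Gauss sums $\tau^*(\QQ,\Ind_k^\QQ\psi)$. The main technical obstacle in carrying out the above plan lies in the careful book-keeping of contragredient twists and Euler factor conventions: the element $\chi_j(A,F/k)$ is built from the Tate module of the dual $A^t$, whilst the leading terms $\Lstar(A,\check\psi,1)$ are attached to twists of $A$ by $\check\psi$, and aligning these conventions with the formulation of the eTNC in \cite{bufl99} requires some care. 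Once this alignment is made, the proof is a direct rearrangement.
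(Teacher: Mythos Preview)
Your proposal is correct and follows essentially the same route as the paper's proof: both start from the formulation of eTNC$_p$ as $R\Omega_j = -\delta_{G,p}(j_*(\text{equivariant leading term}))$, substitute the formula of Theorem~\ref{comp prop} for $\chi_j^{\rm loc}(A,F/k)$ into Proposition~\ref{comparison lemma}, observe that under~\ref{hyp_e} and~\ref{hyp_h} the contributions at $S_{\rm b}\cup S_p$ cancel to leave only the $S_{\rm r}$-Euler factors, and then identify these with the truncation passing from $\Lstar(A,\check\psi,1)$ to $\Lstar_{S_{\rm r}}(A,\check\psi,1)$. The only cosmetic difference is that the paper keeps the equivariant $L$-value in the motivic normalisation $\Lstar(_{\QQ[G]}h^1(A_{/F})(1),0)$ until the final line, whereas you decompose it into $\psi$-components from the outset.
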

\begin{proof} Since $\Lstar_{S_{\rm r}}(A,\check{\psi},1)\neq 0$ for all $\psi$ in $\Ir(G)$ the element $\calLstar_{A,F/k}$ clearly belongs to $\zeta(\CC[G])^\times$.

We next recall that~\cite[Conjecture 4(iv)]{bufl99} for the pair $\bigl(h^1(A_{/F})(1), \ZZ[G]\bigr)$ asserts
\[R\Omega\bigl(h^1(A_{/F})(1), \ZZ[G]\bigr) = -\delta_G\bigl(\Lstar(_{\QQ[G]}h^1(A_{/F})(1),0)\bigr)\]
where $\delta_G$ is the `extended boundary homomorphism' $\zeta \bigl( \RR[G]\bigr)^\times \to K_0\bigl(\ZZ[G],\RR[G]\bigr)$ defined in~\cite[Lemma 9]{bufl99} and $\Lstar\bigl(_{\QQ[G]}h^1(A_{/F})(1),0\bigr)$ is the leading term at $s=0$ of the $\zeta\bigl(\CC[G]\bigr)$-valued $L$-function $L\bigl(_{\QQ[G]}h^1(A_{/F})(1),s\bigr)$ defined in
\cite[\S4.1]{bufl99}.

Noting that for each isomorphism $j:\CC\cong \CC_p$ one has $j_{G,*}\circ \delta_G = \delta_{G,p}\circ j_*$, and recalling the result of Proposition~\ref{comparison lemma}, it follows that the eTNC$_p$ for $\bigl(h^1(A_{/F})(1),\ZZ[G]\bigr)$ is valid if and only if for every isomorphism $j$ one has
\begin{multline*} \delta_{G,p}\Bigl(j_*\bigl(\Lstar(_{\QQ[G]}h^1(A_{/F})(1),0)\bigr)\Bigr)\\ = \chi_j(A,F/k) + \chi_j^{\rm loc}(A,F/k) - \sum_{v \in S\cup S_p}\delta_{G,p}\bigl(L_v(A,F/k)\bigr).\end{multline*}
Now hypotheses~\ref{hyp_e} and~\ref{hyp_h} combine to imply that $S_{\rm r} = (S\cup S_p)\setminus (S_{\rm b}\cup S_p)$ and for each place $v$ in this set the term $L_v(A,F/k)$ is equal to the value at $s=0$ of the Euler factor at $v$ that occurs in the definition of $L\bigl(_{\QQ[G]}h^1(A_{/F})(1),s\bigr)$ (by~\cite[Remark 7]{bufl99}). In view of the formula for $\chi_j^{\rm loc}(A,F/k)$ that is given in Theorem~\ref{comp prop}, one therefore finds that the above equality is valid if and only if
\[ \delta_{G,p}\biggl(j_*\Bigl(\frac{\Lstar_{S_{\rm r}}\bigl(_{\QQ[G]}h^1(A_{/F})(1),0\bigr)\cdot\tau^*(F/k)^{d}}{\Omega(A,F/k)\cdot w_\infty(F/k)^{d}} \Bigr)\biggr) = \chi_j(A,F/k).\]
It now suffices to show that the quotient on the left hand side of this equality is equal to $\calLstar_{A,F/k}$ and this follows from a straightforward comparison of all of the terms involved and then noting that the definition of the truncated $L$-function $L_{S_{\rm r}}\bigl(_{\QQ[G]}h^1(A_{/F})(1),s\bigr)$ implies $\Lstar_{S_{\rm r}}\bigl(_{\QQ[G]}h^1(A_{/F})(1),0\bigr) = \sum_{\psi \in \Ir(G)}e_\psi \Lstar_{S_{\rm r}}(A,\check{\psi},1)$.
\end{proof}

If it is valid, then the equality~\eqref{etnc equiv} can be combined with the theory of organising matrices developed in~\cite{omac} to extract from the  element
$\calLstar_{A,F/k}$ a range of detailed information about the arithmetic of $A$ over $F$. To give an explicit example of such an implication we assume, motivated by the
results of Theorem 2.7 and Corollary 2.10 in \cite{Selmerstr}, that there exists a surjective homomorphism of $\ZZ_p[G]$-modules of the form
\[ \pi: \Sel_p(A_{F})^\vee \to \Pi = \Pi^{\rm pr} \oplus \Pi^{\rm npr}\]
where $\Pi$ is a trivial source $\ZZ_p[G]$-module, $\Pi^{\rm pr}$ is a projective $\ZZ_p[G]$-module and $\pi$ induces, upon passage to $G$-coinvariants, an isomorphism
$A(k)^*_p \cong \Pi_G$ (via the relevant canonical short exact sequence of the form (\ref{sha-selmer})). The notion of `trivial source $\ZZ_p[G]$-module'
that we use here corresponds to the one defined in \cite[\S 2.3.2]{Selmerstr}. For the purpose of applying the result \cite[Corollary 2.13]{omac} to prove
Proposition \ref{consequences} below we however warn the reader that this terminology differs from the one introduced in \S 2.2 in loc. cit., where trivial source modules
are instead called `permutation modules'.
In this case we also write $\Upsilon_{\rm pr}$ for the subset of $\Ir(G)$ comprising characters for which the homomorphism
$e_\psi(\CC\otimes_\ZZ \pi)$ is bijective and then define an idempotent in $\zeta\bigl(\QQ[G]\bigr)$ by setting $e_{\rm pr} := \sum_{\psi \in \Upsilon_{\rm pr}} e_\psi$.

Given a natural number $m$ and a matrix $M$ in $\operatorname{M}_m\bigl(\ZZ_p[G]\bigr)$ we write $M'$ for the corresponding `generalised adjoint matrix' in
$\operatorname{M}_m\bigl(\QQ_p[G]\bigr)$ that is defined by Johnston and Nickel in \cite[\S 3.6]{jn}.
We then write $\mathcal{A}_p(G)$ for the $\zeta\bigl(\ZZ_p[G]\bigr)$-submodule of
$\zeta\bigl(\QQ_p[G]\bigr)$ given by the set
\begin{equation*}
  \Bigl\{ x\in \zeta\bigl(\QQ_p[G]\bigr): \text{ if } m > 0 \text{ and }  M \in \operatorname{M}_m\bigl(\ZZ_p[G]\bigr) \text{ then }xM' \in \operatorname{M}_m\bigl(\ZZ_p[G]\bigr)\Bigr\}.
\end{equation*}
For more details about this module see Remark~\ref{denom ideal} below.

\begin{proposition}\label{consequences}
Assume that $A$ and $F$ satisfy the hypotheses~\ref{hyp_a}--\ref{hyp_i} and that the equality~\eqref{etnc equiv} is valid.
Fix a surjective homomorphism of $\ZZ_p[G]$-modules
\[ \pi: \Sel_p(A_{F})^\vee \to \Pi\]
as above and an element
 $a$ of $\mathcal{A}_p(G)$. For each homomorphism $\theta: A^t(F)_p \to A(F)_p^*$ of $\ZZ_p[G]$-modules and each isomorphism of fields $j:\CC\cong \CC_p$ set
\[ R_j(\theta) := \Nrd_{\CC_p[G]} \bigl( (\CC_p\otimes_{\ZZ_p}\theta)\circ (\lambda_{A,F}^{{\rm NT},j})^{-1}\bigr ).\]

Then for each such $\theta$ and $j$ the element $ a\, R_j(\theta)\,j_*(\calLstar_{A,F/k})\, e_{\rm pr}$ belongs to $\ZZ_p[G]$ and annihilates both
modules $\sha_p(A^t_{F})$ and $\Pi^{\rm npr}$. In particular, the element
\begin{equation*}
 \mathcal{L}_{A,F/k} := \sum_{\psi \in \Ir(G)}e_\psi\, \frac{L_{S_{\rm r}}(A,\check{\psi},1)\cdot \tau^*\bigl(\QQ,\Ind_k^\QQ\psi\bigr)^{d} }{\Omega(A,\psi)\cdot w_\infty(\psi)^{d}}
\end{equation*}
is such that $a\cdot j_*(\mathcal{L}_{A,F/k})$ belongs to $\ZZ_p[G]$ and annihilates $\sha_p(A^t_{F})$.
\end{proposition}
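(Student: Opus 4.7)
The plan is to combine the hypothesis (\ref{etnc equiv}) with the theory of organizing matrices developed in \cite{omac}, most crucially \cite[Corollary 2.13]{omac}, together with the structural information on $\Sel_p(A_{F})^\vee$ provided by the surjection $\pi$.

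First, by the definition $\chi_j(A,F/k) = -\chi_{G,p}(C^{f,\bullet}_{A,F},\lambda^{{\rm NT},j}_{A,F})$, the assumed equality (\ref{etnc equiv}) reads
\[
 \delta_{G,p}(j_*(\calLstar_{A,F/k})) = -\chi_{G,p}(C^{f,\bullet}_{A,F},\lambda^{{\rm NT},j}_{A,F})
\]
in $K_0(\ZZ_p[G],\CC_p[G])$. For a homomorphism $\theta:A^t(F)_p\to A(F)_p^*$, and on each central idempotent $e_\psi$ at which $\theta$ is a rational isomorphism, the standard multiplicativity of refined Euler characteristics under a change of trivialization transforms this equality into
\[
 -\chi_{G,p}(C^{f,\bullet}_{A,F},\CC_p\otimes_{\ZZ_p}\theta) = \delta_{G,p}\bigl(R_j(\theta)\cdot j_*(\calLstar_{A,F/k})\bigr);
\]
on components $e_\psi$ where $\theta$ is not rationally an isomorphism the factor $e_\psi R_j(\theta)$ vanishes, and the assertions to be proved are trivial on such components.

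The second, and most substantive, step is to translate this $K$-theoretic equality into the claimed integrality and annihilation statements by invoking \cite[Corollary 2.13]{omac}. Here the key input from the arithmetic of $A$ is that, by Lemma~\ref{canisos} and the Cassels-Tate pairing (which applies under hypothesis \ref{hyp_g}), the torsion submodule of $H^2(C^{f,\bullet}_{A,F}) = \Sel_p(A_{F})^\vee$ is isomorphic to $\sha_p(A^t_{F})$. The surjection $\pi$ presents this $H^2$ with a projective summand $\Pi^{\rm pr}$ (captured exactly by the characters in $\Upsilon_{\rm pr}$) sitting alongside a non-projective complement $\Pi^{\rm npr}$. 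Feeding this data into the organizing matrix framework of \cite{omac} produces from the previous display the assertion that $a\,R_j(\theta)\,j_*(\calLstar_{A,F/k})\,e_{\rm pr}$ lies in $\ZZ_p[G]$ and simultaneously annihilates both $\sha_p(A^t_{F})$ (the torsion remaining once the projective $\Pi^{\rm pr}$ is absorbed into the organizing matrix) and $\Pi^{\rm npr}$. The main obstacle is precisely this application: one must carefully match the Euler characteristic data encoded by $(C^{f,\bullet}_{A,F},\CC_p\otimes\theta,\pi)$ with the hypotheses of \cite[Corollary 2.13]{omac}, and in particular properly attribute to each summand of $\Pi$ its role in the annihilator formula.

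Finally, the `in particular' statement is obtained by specializing the main statement to $\theta=0$. The $\psi$-component of $R_j(0)$ is the reduced norm of the zero endomorphism of $e_\psi\CC_p A(F)_p^*$, which vanishes if $r_\psi>0$ and equals $1$ (the empty product) if $r_\psi=0$; hence $R_j(0) = \sum_{\psi:\,r_\psi=0}e_\psi$. For such $\psi$ both $e_\psi\CC_pA(F)_p^*$ and its quotient $e_\psi\CC_p\Pi$ vanish, so $\psi\in\Upsilon_{\rm pr}$ and therefore $R_j(0)\cdot e_{\rm pr} = R_j(0)$. Combined with the facts that $\Lstar_{S_{\rm r}}(A,\check\psi,1) = L_{S_{\rm r}}(A,\check\psi,1)$ whenever $r_\psi=0$ and $L_{S_{\rm r}}(A,\check\psi,1)=0$ whenever $r_\psi>0$, this shows $R_j(0)\cdot j_*(\calLstar_{A,F/k}) = j_*(\mathcal{L}_{A,F/k})$, so that the main statement with $\theta=0$ yields that $a\cdot j_*(\mathcal{L}_{A,F/k})$ lies in $\ZZ_p[G]$ and annihilates $\sha_p(A^t_F)$, as required.
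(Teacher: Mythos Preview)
Your argument for the main assertion is essentially the same as the paper's: both feed the identification of $j_*(\calLstar_{A,F/k})$ as a characteristic element for $\bigl(R\Gamma_f(k,T_{p,F}(A)),(\lambda^{{\rm NT},j}_{A,F})^{-1}\bigr)$, together with the identification $\sha_p(A^t_F)\cong H^2_{\rm tor}$ via Cassels--Tate, into \cite[Corollary~2.13]{omac}. One minor presentational point: your displayed equality $-\chi_{G,p}(C^{f,\bullet}_{A,F},\CC_p\otimes\theta)=\delta_{G,p}(R_j(\theta)\,j_*(\calLstar_{A,F/k}))$ is not literally well-defined, since the refined Euler characteristic requires the trivialization to be an isomorphism. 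The paper avoids this by not unpacking the role of $\theta$ at all and simply citing \cite[Corollary~2.13]{omac}, which handles the $\theta$ internally; your remark that the statement is trivial on components where $e_\psi R_j(\theta)=0$ is correct and repairs this, but the intermediate displayed equation should be suppressed or stated only componentwise.

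For the ``in particular'' clause you take a genuinely different route from the paper. The paper does \emph{not} set $\theta=0$; instead it replaces the given $\pi$ by the specific surjection $\Sel_p(A_F)^\vee\to A(k)_p^*$, for which $\Pi^{\rm pr}=0$, so that $\Upsilon_{\rm pr}=\{\psi:L_{S_{\rm r}}(A,\check\psi,1)\neq 0\}$ and $e_{\rm pr}\,\CC_p\cdot A(F)_p^*=0$; this forces $R_j(\theta)\,e_{\rm pr}=e_{\rm pr}$ for \emph{every} $\theta$ and gives $\calLstar_{A,F/k}\,e_{\rm pr}=\mathcal{L}_{A,F/k}$ directly. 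Your approach instead keeps the original $\pi$ and specializes $\theta=0$, using that $R_j(0)=\sum_{r_\psi=0}e_\psi$, that every such $\psi$ lies in $\Upsilon_{\rm pr}$ (since $e_\psi\,\CC_p\cdot\Sel_p(A_F)^\vee=0$), and that $r_\psi=r_{\check\psi}$. Both arguments are correct; the paper's is slightly shorter, while yours has the advantage of not requiring a change of $\pi$ and of making explicit why the zero homomorphism already suffices to extract the special-value (as opposed to leading-term) annihilation statement.
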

\begin{proof} Hypothesis~\ref{hyp_a} implies that the module $A^t(F)_p$ is $\ZZ_p$-free. Lemma~\ref{canisos} therefore implies, in the terminology of~\cite{omac}, that $R\Gamma_f\bigl(k,T_{p,F}(A)\bigr)$ is an admissible complex of $\ZZ_p[G]$-modules and equation~\eqref{etnc equiv} asserts that $j_*(\calLstar_{A,F/k})$ is a characteristic element for the pair $\bigl(R\Gamma_f(k,T_{p,F}(A)),\,(\lambda^{{\rm NT},j}_{A,F})^{-1}\bigr)$. The first claim is therefore a direct consequence of~\cite[Corollary 2.13]{omac} and the isomorphisms
\[ \sha_p(A^t_{F}) \cong \sha_p(A_{F})^\vee \cong \bigl(\Sel_p(A_{F})^\vee\bigr)_{\rm tor} \cong H^2\bigl(R\Gamma_f(k,T_{p,F}(A))\bigr)_{\rm tor},\]
where the first isomorphism is induced by the Cassels-Tate pairing, the second follows from the short exact sequence (\ref{sha-selmer}) (with $L=F$) and the last is a consequence of Lemma~\ref{canisos}.

 To deduce the second claim we note that, under our hypotheses~\ref{hyp_a}--\ref{hyp_i}, the natural projection map
\[ \Sel_p(A_{F})^\vee \to \bigl(\Sel_p(A_{F})^\vee\bigr)_G \cong \Sel_p(A_{k})^\vee \to A(k)_p^*=:\Pi\]
is a homomorphism $\pi$ of the required type. Further, in this case one has $\Pi^{\rm pr} = 0$ and so hypothesis~\ref{hyp_i} implies $\Upsilon_{\rm pr}$ is equal to $\bigl\{\psi \in \Ir(G): L_{S_{\rm r}}(A,\check{\psi},1) \not= 0\bigr\}$ and hence that $\mathcal{L}_{A,F/k} = \mathcal{L}_{A,F/k}\,e_{\rm pr} = \calLstar_{A,F/k}\,e_{\rm pr}$. It therefore suffices to show that $R_j(\theta)e_{\rm pr} = e_{\rm pr}$ and this is true because in this case the space $e_{\rm pr}\Hom_\CC\bigl(\CC\cdot A(F),\CC\bigr)$ vanishes.
\end{proof}

\begin{remark}\label{explicit cong}
 The conjectural equality~\eqref{etnc equiv} implies, via Proposition~\ref{consequences}, a family of explicit congruence relations between the complex numbers $A(\psi)$ that are defined by the equalities $A(\psi)\,e_\psi = a\,R_j(\theta)\,j_*(\calLstar_{A,F/k})\,e_\psi $ as $\psi$ varies over $\Upsilon_{\rm pr}$. This is because
$$
 a\,R_j(\theta)\,j_*(\calLstar_{A,F/k})\,e_{\rm pr} = \sum_{\psi \in \Upsilon_{\rm pr}} A(\psi)\,e_\psi =  \,|G|^{-1} \sum_{g \in G}g\sum_{\psi \in \Upsilon_{\rm pr}} \psi(1)\,\check\psi(g)\,A(\psi)
$$
and this sum belongs to $\zeta\bigl(\ZZ_p[G]\bigr)$ if and only if the elements $A(\psi)$ satisfy all of the following conditions:

 \begin{itemize}
 \item[(i)] $A(\psi)\in \ZZ_p[\psi]$ for all $\psi \in \Upsilon_{\rm pr}$;
 \item[(ii)] $\alpha(A(\psi)) = A(\alpha\circ\psi)$ for all $\psi \in \Upsilon_{\rm pr}$ and $\alpha\in G_{\QQ_p(\psi)/\QQ_p}$;
 \item[(iii)] $\sum_{\psi \in \Upsilon_{\rm pr}}\psi(1)\check\psi(g)A(\psi)\equiv 0 \pmod{|G|\ZZ_p}$ for all $g \in G$.
\end{itemize}
\end{remark}

\begin{remark}\label{denom ideal} Ideals of the form $\mathcal{A}_p(G)$ were introduced by Nickel in~\cite{nickel} and have been computed extensively by Johnston and Nickel in~\cite{jn}.
For example, if $M= I_m$, then $M' = I_m$ so that $\mathcal{A}_p(G) \subseteq \zeta\bigl(\ZZ_p[G]\bigr)$ and it is shown in loc. cit. that this inclusion is an equality if and only if the order of the commutator subgroup of $G$ is not divisible by $p$. More generally, for each $M$ in $\operatorname{M}_d\bigl(\ZZ_p[G]\bigr)$ the matrix
$M'$ belongs to $\operatorname{M}_m(\mathcal{M})$ for any maximal order $\mathcal{M}$ in $\QQ_p[G]$ that contains $\ZZ_p[G]$ (cf.~\cite[Lemma. 4.1]{nickel}) and so Jacobinski's description in~\cite{Jac} of the central conductor of
$\mathcal{M}$ in $\ZZ_p[G]$ implies, for example, that for any $\QQ_p^c$-valued character $\psi$ of $G$ the element
$\psi(1)^{-1}|G|e_\psi$ belongs to $\ZZ_p[\psi]\otimes_{\ZZ_p}\mathcal{A}_p(G)$ where $\ZZ_p[\psi]$ is the subring of $\QQ_p^c$ that is generated over $\ZZ_p$ by the values of $\psi$. This gives an easy `lower bound' on $\mathcal{A}_p(G)$ (but which is, in most cases, not best possible).\end{remark}

\begin{remark} For the explicit computation of terms of the form $\chi_j(A,F/k)$ and $R_j(\theta)$ occurring in Theorem \ref{equivalence theorem} and Proposition
\ref{consequences} respectively in the case that $G$ is cyclic of $p$-power order we refer the reader to \cite{bleymc} where, in addition, Theorem \ref{equivalence theorem}
is used in order to discuss further explicit
(conjectural) properties of elements of the form $a\, R_j(\theta)\,j_*(\calLstar_{A,F/k})\, e_{\rm pr}$ in such settings.
\end{remark}

\subsection{The case of projective Mordell-Weil groups}\label{projective case}

In the rest of this article we consider the conjectural equality~\eqref{etnc equiv} in the case that $A(F)_p$ and $A^t(F)_p$ are both projective $\ZZ_p[G]$-modules. (In this regard, note that if $A$ is principally polarised, then the $\ZZ_p[G]$-modules $A(F)_p$ and $A^t(F)_p$ are isomorphic.)
This corresponds to taking the module $\Pi$ in Proposition~\ref{consequences} to be equal to $\Pi^{\rm pr} = A(F)_p^*$ and the more general case that $\Pi = A(F)_p^*$ is a trivial source $\ZZ_p[G]$-module will be considered in a future article.

In the following result we use the notion of non-commutative Fitting invariant that was introduced by Parker~\cite{parker} and studied further by Nickel~\cite{nickel}.

\begin{proposition}\label{etnc proj case}
Assume that $A$ and $F$ satisfy~\ref{hyp_a}--\ref{hyp_i}. Assume also that $A(F)_p$ and $A^t(F)_p$ are both projective $\ZZ_p[G]$-modules.

Then the projective dimension of the $\ZZ_p[G]$-module $\sha_p(A^t_{F})$ is at most one and there exists an isomorphism $\iota_p: A(F)^*_p \cong A^t(F)_p$ of $\ZZ_p[G]$-modules.

For each $\psi\in \Ir(G)$ now set
\[
 R_{\psi,\iota_p}^{{\rm NT},j}(A_{/F}) := \det {}_{\CC_p}\Bigl( (\CC_p\otimes_{\ZZ_p}\iota_p)\circ\lambda^{{\rm NT},j}_{A,F}\Bigm\vert \bigl(V_{j\circ\psi}\otimes_{\ZZ}A^t(F)\bigr)^G\Bigr) \in \CC_p^{\times}.
\]

Then the equality~\eqref{etnc equiv} is valid if and only if the (non-commutative) Fitting invariant of the $\ZZ_p[G]$-module $\sha_p(A^t_{F})$ is generated by the set of elements of the form $u\cdot\calLstar_{A,F/k,j,\iota_p}$ where $u$ is in $\Nrd_{\QQ_p[G]}(K_1(\ZZ_p[G]))$ and
\begin{equation*}
 \calLstar_{A,F/k,j,\iota_p} := \sum_{\psi \in \Ir(G)} j_*\biggl(e_\psi \cdot \frac{\Lstar_{S_{\rm r}}(A,\check{\psi},1)\cdot \tau^*\bigl(\QQ,\Ind_k^\QQ\psi\bigr)^{d}}{\Omega(A,\psi)\cdot w_\infty(\psi)^{d}}\biggr)  \frac{1}{R_{\psi,\iota_p}^{{\rm NT},j}(A_{/F})}.
\end{equation*}
\end{proposition}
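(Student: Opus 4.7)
My plan is to establish the three assertions in sequence.

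For the bound on the projective dimension of $\sha_p(A^t_F)$, Lemma~\ref{canisos} exhibits $C^{f,\bullet}_{A,F}$ as a perfect complex over $\ZZ_p[G]$ with cohomology $A^t(F)_p$ in degree one and $\Sel_p(A_F)^\vee$ in degree two. Choosing a two-term projective representative $[P^1\xrightarrow{d}P^2]$ and using the assumed projectivity of $\ker(d) = A^t(F)_p$ to split this kernel off as a direct summand of $P^1$, I exhibit $\Sel_p(A_F)^\vee = \cok(d)$ as the cokernel of an injection between projectives, hence as a module of projective dimension at most one. Since $A(F)_p$ is projective, so is $A(F)_p^* = \Hom_{\ZZ_p}(A(F)_p,\ZZ_p)$, and so the sequence~\eqref{sha-selmer} (with $L=F$) splits; combined with the Cassels--Tate identification $\sha_p(A_F)^\vee\cong\sha_p(A^t_F)$, this gives the required bound on $\sha_p(A^t_F)$. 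The existence of $\iota_p$ then follows from the fact that the N\'eron--Tate height pairing on $A(F)\times A^t(F)$ is $G$-equivariant and non-degenerate, hence induces a $\QQ[G]$-isomorphism $\QQ\otimes_\ZZ A^t(F)\cong\Hom_\QQ(\QQ\otimes_\ZZ A(F),\QQ)$; after base change to $\QQ_p$, the projective $\ZZ_p[G]$-modules $A^t(F)_p$ and $A(F)_p^*$ therefore have identical $\QQ_p[G]$-characters, and as $\ZZ_p[G]$ is a $\ZZ_p$-order in the semisimple algebra $\QQ_p[G]$ over a semilocal base, finitely generated projective modules are classified by their image in $K_0(\QQ_p[G])$, yielding the required isomorphism $\iota_p: A(F)_p^*\cong A^t(F)_p$.

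To treat the equivalence, I construct a representative $P^\bullet = [P\xrightarrow{d}P]$ of $C^{f,\bullet}_{A,F}$ of the form required in \S\ref{ec} such that $P\cong A^t(F)_p\oplus Q$ for some finitely generated projective $Q$, the differential $d$ vanishes on the $A^t(F)_p$ summand, and its restriction to $Q$ is an injection $d_0\colon Q\hookrightarrow Q$ with cokernel $\sha_p(A^t_F)$. Here the target is identified with the source via the splitting of~\eqref{sha-selmer} combined with $\iota_p\oplus\id_Q\colon A(F)_p^*\oplus Q \cong A^t(F)_p\oplus Q$. Under these identifications, $P^\bullet$ decomposes as a genuine direct sum of the subcomplex $[A^t(F)_p\xrightarrow{0}A^t(F)_p]$ and the subcomplex $[Q\xrightarrow{d_0}Q]$ representing $\sha_p(A^t_F)[-2]$. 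Setting $\tilde\lambda := (\CC_p\otimes_{\ZZ_p}\iota_p)\circ\lambda_{A,F}^{{\rm NT},j}$, viewed as an automorphism of $\CC_p\otimes_{\ZZ_p}A^t(F)_p$, the trivialisation $\lambda_{A,F}^{{\rm NT},j}$ induces the trivialisation $\tilde\lambda$ on the first subcomplex and the zero trivialisation on the second (whose rationalised cohomology vanishes). Since $\Nrd_{\CC_p[G]}(\tilde\lambda) = \sum_{\psi\in\Ir(G)} R_{\psi,\iota_p}^{{\rm NT},j}(A_{/F})\,e_\psi$, additivity of $\chi_{G,p}$ under direct sums together with a direct computation showing $\chi_{G,p}([A^t(F)_p\xrightarrow{0}A^t(F)_p],\tilde\lambda) = -\delta_{G,p}(\Nrd_{\CC_p[G]}(\tilde\lambda))$ then yields
\begin{equation*}
\chi_j(A,F/k) \;=\; \delta_{G,p}\bigl(\Nrd_{\CC_p[G]}(\tilde\lambda)\bigr) - \chi_{G,p}\bigl(\sha_p(A^t_F)[-2],0\bigr),
\end{equation*}
so that~\eqref{etnc equiv} is equivalent to the identity
\begin{equation*}
\delta_{G,p}\bigl(\calLstar_{A,F/k,j,\iota_p}\bigr) \;=\; -\chi_{G,p}\bigl(\sha_p(A^t_F)[-2],0\bigr).
\end{equation*}

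To conclude, one observes that $-\chi_{G,p}(\sha_p(A^t_F)[-2],0) = \delta_{G,p}(\Nrd_{\QQ_p[G]}(d_0))$, by direct application of the construction of \S\ref{ec} to the presentation $0\to Q\xrightarrow{d_0}Q\to\sha_p(A^t_F)\to 0$. Since the kernel of $\delta_{G,p}$ is $\Nrd_{\QQ_p[G]}(K_1(\ZZ_p[G]))$, the last displayed identity is equivalent to the assertion that $\calLstar_{A,F/k,j,\iota_p} = u\cdot \Nrd_{\QQ_p[G]}(d_0)$ for some $u\in\Nrd_{\QQ_p[G]}(K_1(\ZZ_p[G]))$. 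By the defining property of the non-commutative Fitting invariant of Parker--Nickel (using the bound on the projective dimension of $\sha_p(A^t_F)$ established above to reduce to length-one projective presentations, and the fact that the reduced norms of two such presentations differ by an element of $\Nrd_{\QQ_p[G]}(K_1(\ZZ_p[G]))$), this is precisely the assertion that the Fitting invariant of $\sha_p(A^t_F)$ is generated by the elements $u\cdot\calLstar_{A,F/k,j,\iota_p}$ as $u$ ranges over $\Nrd_{\QQ_p[G]}(K_1(\ZZ_p[G]))$.

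The principal technical obstacle is the construction of $P^\bullet$ as a genuine direct sum of subcomplexes (rather than only up to quasi-isomorphism) on which the trivialisation $\lambda^{{\rm NT},j}_{A,F}$ decomposes correspondingly. This requires a coordinated choice of splittings of~\eqref{sha-selmer}, of the inclusion $\ker(d)\hookrightarrow P^1$, and of a lift to $P^2$ of the surjection $\Sel_p(A_F)^\vee\twoheadrightarrow A(F)_p^*$, combined with transport of the target via $\iota_p\oplus\id_Q$ and a stabilisation step (using that $\ZZ_p[G]$ is semilocal) to arrange $P^1=P^2$. A further careful verification is then needed to match the sign and normalisation conventions in the definitions of $\chi_j(A,F/k)$, $\calLstar_{A,F/k,j,\iota_p}$ and the Fitting invariant.
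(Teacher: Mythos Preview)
Your proposal is correct and arrives at the same key identity
\[
\chi_j(A,F/k) \;=\; \delta_{G,p}\bigl(\Nrd_{\CC_p[G]}(\tilde\lambda)\bigr) - \chi_{G,p}\bigl(\sha_p(A^t_F)[-2],0\bigr),
\]
from which the Fitting invariant reformulation follows just as in the paper. The first two assertions (projective dimension bound and existence of $\iota_p$) are handled essentially as in the paper, with your invocation of the classification of projectives over the semilocal order $\ZZ_p[G]$ being exactly Swan's Theorem, which the paper cites directly.

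The genuine difference lies in how the displayed identity is obtained. You work at the chain level: you build a representative $[P\xrightarrow{d}P]$ of $C^{f,\bullet}_{A,F}$ that splits as a \emph{genuine} direct sum $[A^t(F)_p\xrightarrow{0}A^t(F)_p]\oplus[Q\xrightarrow{d_0}Q]$ (after transporting the target via $\iota_p\oplus\id_Q$ and stabilising), and then read off the Euler characteristic summand by summand. The paper instead stays in the derived category: it defines $C^\bullet:=[A^t(F)_p\xrightarrow{0}A(F)_p^*]$, constructs the exact triangle
\[
C^\bullet \longrightarrow R\Gamma_f\bigl(k,T_{p,F}(A)\bigr) \longrightarrow \sha_p(A^t_F)[-2] \longrightarrow C^\bullet[1]
\]
by choosing a section of $\Sel_p(A_F)^\vee\twoheadrightarrow A(F)_p^*$, and then invokes the additivity criterion for refined Euler characteristics under exact triangles (\cite[Corollary~6.6]{additivity}). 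It then uses the pleasant observation that $\chi_{G,p}\bigl(C^\bullet,(\CC_p\otimes_{\ZZ_p}\iota_p)^{-1}\bigr)=0$ (since $\iota_p$ is a $\ZZ_p[G]$-isomorphism) to convert $-\chi_{G,p}(C^\bullet,\lambda^{{\rm NT},j}_{A,F})$ into $\delta_{G,p}(\Nrd(\tilde\lambda))$. This sidesteps entirely the chain-level bookkeeping you flag as the ``principal technical obstacle'': no coordinated splittings, no stabilisation to force $P^1=P^2$, no checking that the trivialisation respects the direct-sum decomposition. Your route is more elementary in that it avoids citing the general additivity machinery, but the paper's route is considerably cleaner for exactly the reasons you anticipate in your final paragraph.
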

\begin{proof}
The assumed projectivity of $A(F)_p$ implies that its $\ZZ_p$-linear dual $A(F)_p^*$ is also a projective $\ZZ_p[G]$-module. The existence of an isomorphism $\iota_p$ therefore follows from Swan's Theorem~\cite[Theorem 32.1]{curtisr} and the fact that $\lambda^{{\rm NT},j}_{A,F}$ induces an isomorphism of $\CC_p[G]$-modules $\CC_p\cdot A^t(F)_p \cong \Hom_{\CC_p}\bigl(\CC_p\otimes_\ZZ A(F),\CC_p\bigr) = \CC_p\cdot A(F)^*_p$.

We now write $C^\bullet$ for the complex $A^t(F)_p \xrightarrow{0} A(F)^*_p$, where the first term occurs in degree one. Then, since $A(F)_p^*$ is a projective $\ZZ_p[G]$-module, we may choose a $\ZZ_p[G]$-equivariant section to the natural surjection
\[ H^2\bigl(R\Gamma_f(k,T_{p,F}(A))\bigr) \cong \Sel_p(A_{F})^\vee \to A(F)^*_p = H^2(C^\bullet).\]
Since the kernel of this surjection is isomorphic (via the Cassels-Tate pairing) to $\sha_p(A^t_{F})$, such a section induces a short exact sequence of $\ZZ_p[G]$-modules
\[ 0\to H^2(C^\bullet) \to H^2\bigl(R\Gamma_f(k,T_{p,F}(A))\bigr) \to \sha_p(A^t_{F})\to 0\]
and hence also an exact triangle in $D^{\rm p}\bigl(\ZZ_p[G]\bigr)$ of the form
\[ C^\bullet \to R\Gamma_f(k,T_{p,F}(A)) \to \sha_p(A^t_{F})[-2] \to C^\bullet [1].\]
In particular, since $\sha_p(A^t_{F})[-2]$ belongs to $D^{\rm p}\bigl(\ZZ_p[G]\bigr)$, the projective dimension of the finite $\ZZ_p[G]$-module $\sha_p(A^t_{F})$ is finite, and hence at most one (by~\cite[Chapter VI, (8.12)]{brown}), as claimed.

Further, if we use the cohomology sequence of the above triangle to identify $\CC_p\cdot H^i(C^\bullet)$ and $\CC_p\cdot H^i\bigl(R\Gamma_f(k,T_{p,F}(A))[1]\bigr)$ in all degrees $i$, then this triangle combines with the additivity criterion of~\cite[Corollary 6.6]{additivity} to imply that the element $\chi_j(A,F/k) = - \chi_{G,p}\bigl(R\Gamma_f(k,T_{p,F}(A)),\lambda^{{\rm NT},j}_{A,F}\bigr)$ is equal to
\begin{align*}
 -\chi_{G,p}\bigl(& C^\bullet,\lambda^{{\rm NT},j}_{A,F}\bigr) - \chi_{G,p}\bigl(\sha_p(A^t_{F})[-2],0\bigr)\\
    & = -\chi_{G,p}\bigl(C^\bullet,\lambda^{{\rm NT},j}_{A,F}\bigr) + \chi_{G,p}\bigl(C^\bullet,(\CC_p\otimes_{\ZZ_p}\iota_p)^{-1}\bigr) - \chi_{G,p}\bigl(\sha_p(A^t_{F})[-2],0\bigr)\\
   & = \delta_{G,p}\bigl(\Nrd_{\CC_p[G]}((\CC_p\otimes_{\ZZ_p}\iota_p)\circ\lambda^{{\rm NT},j}_{A,F})\bigr) - \chi_{G,p}\bigl(\sha_p(A^t_{F})[-2],0\bigr).
\end{align*}
Here the first displayed equality is valid as $\chi_{G,p}(C^\bullet,(\CC_p\otimes_{\ZZ_p}\iota_p)^{-1}) = 0$ and the second because the difference  $-\chi_{G,p}(C^\bullet,\lambda^{{\rm NT},j}_{A,F}) + \chi_{G,p}(C^\bullet,(\CC_p\otimes_{\ZZ_p}\iota_p)^{-1})$ is represented by the triple $\bigl(A^t(F)_p, (\CC_p\otimes_{\ZZ_p}\iota_p)\circ\lambda^{{\rm NT},j}_{A,F},A^t(F)_p\bigr)$.

The equality~\eqref{etnc equiv} is therefore valid if and only if one has
\begin{align*}
\delta_{G,p}\Bigl(j_*(\calLstar_{A,F/k})\cdot & \Nrd_{\CC_p[G]}\bigl((\CC_p\otimes_{\ZZ_p}\iota_p)\circ\lambda^{{\rm NT},j}_{A,F}\bigr)^{-1}\Bigr)\\
  & =  \delta_{G,p}\bigl(j_*(\calLstar_{A,F/k})\bigr) - \delta_{G,p}\Bigl(\Nrd_{\CC_p[G]}\bigl((\CC_p\otimes_{\ZZ_p}\iota_p)\circ\lambda^{{\rm NT},j}_{A,F}\bigr)\Bigr)\\
    & =  \chi_j(A,F/k) - \delta_{G,p}\Bigl(\Nrd_{\CC_p[G]}\bigl((\CC_p\otimes_{\ZZ_p}\iota_p)\circ\lambda^{{\rm NT},j}_{A,F}\bigr)\Bigr)  \\
   & = -\chi_{G,p}\bigl(\sha_p(A^t_{F})[-2],0\bigr).
\end{align*}

In addition, one has $\Nrd_{\CC_p[G]}\bigl((\CC_p\otimes_{\ZZ_p}\iota_p)\circ\lambda^{{\rm NT},j}_{A,F}\bigr)e_{j\circ\psi} = R_{\psi,\iota_p}
^{{\rm NT},j}(A_{/F})e_{j\circ\psi}$ for each $\psi$ in $\Ir(G)$ and so
\[ j_*(\calLstar_{A,F/k})\cdot\Nrd_{\CC_p[G]}\bigl((\CC_p\otimes_{\ZZ_p}\iota_p)\circ\lambda^{{\rm NT},j}_{A,F}\bigr)^{-1} = \calLstar_{A,F/k,j,\iota_p}.\]
Given this, a straightforward exercise (comparing the explicit definitions of refined Euler characteristic and non-commutative Fitting invariants) shows that the above formula for $\chi_{G,p}\bigl(\sha_p(A^t_{F})[-2],0\bigr)$ is valid if and only if $\Fit_{\ZZ_p[G]}\bigl(\sha_p(A^t_{F})\bigr)$ is generated by the set of elements $u\cdot\calLstar_{A,F/k,j,\iota_p}$ with $u$ in $\Nrd_{\QQ_p[G]}(K_1(\ZZ_p[G]))$, as claimed. \end{proof}

\subsection{Dihedral congruences for elliptic curves}\label{dt ec}

We now investigate the criterion of Proposition~\ref{etnc proj case} in the case that $A$ is an elliptic curve (so $A = A^t$) and $F/k$ is dihedral (in the sense of
Mazur and Rubin~\cite{mr2}).

Thus, as before,
we have an odd prime $p$ and a Galois extension $F/k$ of group $G$ with $p$-Sylow subgroup $P$ and we assume that $P$ is an abelian (normal) subgroup of $G$ of index
two and that the conjugation action of any lift to $G$ of the generator of $G/P$ inverts elements of $P$.
In particular, the degree of $F/k$ is equal to $2p^n$ for some $n \geq 1$ and $K/k$ is a quadratic extension. We fix an element $\tau$ of order $2$ in $G$. We set
$\eins := \eins_G$ and write $\epsilon$ for the unique non-trivial linear character of $G$.

In the following result, we will be interested in the case when $A$ has rank one over $K$. In this case we write $\rho_{A}$ for the unique linear character which does
\emph{not} occur in the $\CC[G]$-module $\CC\otimes_\ZZ A(K)$. Hence $\rho_A = \eins$ if $\rk(A_k) = 0$ and $\rho_A=\epsilon$ otherwise.

We also set $\ZZ' := \ZZ\bigl[\tfrac{1}{2}\bigr]$.

\begin{proposition}\label{Q_prop}
  Assume that the elliptic curve $A$, odd prime $p$ and dihedral extension $F/k$ satisfy the hypotheses~\ref{hyp_a}--\ref{hyp_e} and~\ref{hyp_g}, that all places above $p$ split in $K/k$, that $\rk(A_K) = 1$ and that $\sha_p(A_{K}) =  0$.

  Then $\sha_p(A_{F}) =  0 $ and there is a point $Q$ in $A(F)$ with $\tau(Q) = -(-1)^{\rk(A_k)}\,Q$ which generates a $\ZZ'[G]$-module that is isomorphic to $\ZZ'[G]\bigl(1-(-1)^{\rk(A_k)}\,\tau\bigr)$ and has finite, prime-to-$p$, index in $\ZZ'\otimes_{\ZZ}A(F)$.

  In particular, one has $r_{\rho_A} = 0$ and $r_\psi = 1$ for all $\psi \in \Ir(G)\setminus\{\rho_A\}$.
\end{proposition}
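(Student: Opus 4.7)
The plan is to establish three things in order: the vanishing of $\sha_p(A_F)$, the claimed values of the $r_\psi$, and the existence of the point $Q$.

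For the vanishing of $\sha_p(A_F)$: hypothesis \ref{hyp_a} gives $A(K)[p]=0$, hence $A(F)[p^\infty]=0$ by the $P$-fixed-point principle for $p$-groups. I then apply a Mazur-style control theorem for Selmer groups along the $p$-extension $F/K$ — hypotheses \ref{hyp_b}--\ref{hyp_e} together with the splitting of $p$-adic places in $K/k$ ensure that the kernel (bounded by $H^1(P,A(F)[p^\infty])=0$) and the cokernel (a product of local cohomology terms at places in $S^K_{\rm b}\cup S^K_{\rm r}\cup S^K_p$) of the restriction both vanish. Thus $\Sel_{p^\infty}(A_K)\cong \Sel_{p^\infty}(A_F)^P$, which by $\sha_p(A_K)=0$ and $\rk(A_K)=1$ equals the divisible group $A(K)\otimes\QQ_p/\ZZ_p$ of corank one. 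A short diagram chase on the fundamental exact sequence
\[
0 \to A(F)\otimes\QQ_p/\ZZ_p \to \Sel_{p^\infty}(A_F) \to \sha_p(A_F) \to 0,
\]
noting that the $P$-invariants of the left term already contain $A(K)\otimes\QQ_p/\ZZ_p$ as their divisible part, forces $\sha_p(A_F)^P=0$; since $\sha_p(A_F)$ is a finite $p$-group acted on by a finite $p$-group this yields $\sha_p(A_F)=0$.

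For the values of $r_\psi$: the subspace $(A(F)\otimes\QQ)^P=A(K)\otimes\QQ$ is one-dimensional and $\tau$ acts on it as $-\eta$ with $\eta:=(-1)^{\rk(A_k)}$ — namely as $+1$ when $\rk(A_k)=1$ (the $k$-rational generator is fixed by $\Gal(K/k)$) and as $-1$ when $\rk(A_k)=0$ (the generator of $A(K)$ comes from the non-trivial quadratic twist of $A$ by $K/k$). Hence the linear character of $G$ absent from this one-dimensional fixed space is precisely $\rho_A$, giving $r_{\rho_A}=0$ and $r_{\rho'}=1$ for the other linear character $\rho'$. To pin down $r_\psi$ for each non-linear $\psi=\Ind_P^G\chi$, I invoke the dihedral Galois-structure results of \cite{Selmerstr} (Theorem 2.7 and Corollary 2.10, as cited earlier in the paper): with $\sha_p(A_F)=0$ one has $\Sel_p(A_F)^\vee\cong A(F)_p^*$, and those results then imply that $A(F)_p$ is a projective $\ZZ_p[G]$-module isomorphic to the left ideal $\ZZ_p[G](1-\eta\tau)$. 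A direct character computation on this standard module shows that its character equals $\sum_{\psi\in\Ir(G)\setminus\{\rho_A\}}\psi$, giving $r_\psi=1$ for every $\psi\neq\rho_A$.

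Finally for $Q$: under the isomorphism $A(F)_p\cong\ZZ_p[G](1-\eta\tau)$ let $Q_p\in A(F)_p$ correspond to the generator $1-\eta\tau$; then $\tau Q_p=-\eta Q_p$ and $Q_p$ generates $A(F)_p$ as a $\ZZ_p[G]$-module. A point $Q\in A(F)$ with image $Q_p$ is obtained by lifting (adjusting if necessary by a $2$-torsion point to enforce the eigenvalue relation on the nose, which is harmless after inverting $2$). Then $\ZZ_p[G]Q=A(F)_p$ at $p$, while at each prime $\ell\neq 2,p$ a rank comparison (both sides have $\ZZ_\ell$-rank $|G|/2=p^n$) forces $\ZZ_\ell[G]Q$ to have finite index in $\ZZ_\ell\otimes A(F)$. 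Assembling these local statements, $\ZZ'[G]Q\cong\ZZ'[G](1-\eta\tau)$ has finite, prime-to-$p$, index in $\ZZ'\otimes A(F)$.

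The hard part throughout is the middle step: establishing that $A(F)_p$ is projective of the specific shape $\ZZ_p[G](1-\eta\tau)$. Projectivity amounts to $G$-cohomological triviality of $A(F)_p$ — automatic over the $\langle\tau\rangle$-part since $p$ is odd, but over each order-$p$ subgroup of $P$ it requires a refinement of the control argument applied to subextensions of $F/K$ — and the sign $\eta$ must be matched against the geometric data via the quadratic-twist decomposition of $A(K)$. Both of these structural facts are precisely what the dihedral results of \cite{Selmerstr} supply, and the proof reduces to them.
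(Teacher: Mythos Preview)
Your approach is essentially the same as the paper's: both deduce the vanishing of $\sha_p(A_F)$ and the projectivity/shape of $A(F)_p$ from the dihedral structure results of \cite{Selmerstr}, and then extract the point $Q$ and the values $r_\psi$ from this. The only real difference is in how $Q$ is produced. The paper applies Roiter's Lemma to the pair of $\ZZ'[G]$-lattices $\ZZ'[G](1-\eta\tau)$ and $\ZZ'\otimes_\ZZ A(F)_{\rm tf}$ (which are isomorphic after tensoring with $\QQ$ by the character computation) to obtain directly an embedding with finite prime-to-$p$ cokernel; you instead lift a $\ZZ_p[G]$-generator and argue locally at each prime.

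Your direct construction is salvageable but has two imprecisions worth flagging. First, the natural map $A(F)\to A(F)_p=\ZZ_p\otimes_\ZZ A(F)$ is not surjective, so you cannot literally lift $Q_p$; what you can do is choose $Q\in A(F)$ with $Q\equiv Q_p\pmod{pA(F)_p}$, and then Nakayama over the semi-local ring $\ZZ_p[G]$ gives $\ZZ_p[G]Q=A(F)_p$. Second, adjusting by a $2$-torsion point will not in general enforce $\tau(Q)=-\eta Q$ exactly (the discrepancy $\tau(Q)+\eta Q$ lies in $pA(F)_p\cap A(F)$, which need not be torsion). The clean fix, since you are working over $\ZZ'=\ZZ[\tfrac12]$, is simply to replace $Q$ by $\tfrac{1}{2}(1-\eta\tau)Q$; this satisfies the eigenvalue relation on the nose and, because $(1-\eta\tau)Q_p=2Q_p$ in $A(F)_p$, still generates $A(F)_p$ over $\ZZ_p[G]$. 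With these tweaks your rank-comparison argument at primes $\ell\neq 2,p$ goes through, and Roiter's Lemma is avoided at the cost of a slightly longer write-up.
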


\begin{proof}
  Under the stated hypotheses, \cite[Corollary 2.10(ii)]{Selmerstr} implies that $\sha_p(A_{F})$ vanishes, that $\Sel_p(A_{F})^\vee$ is a projective $\ZZ_p[G]$-module
  and that the multiplicity with which each $\rho$ in $\Ir(P)$ occurs in the representation $\CC_p\cdot\Sel_p(A_{F})^\vee$ is equal to one.

  Roiter's Lemma (cf.~\cite[(31.6)]{curtisr}) therefore implies (via the exact sequence (\ref{sha-selmer}))
  the existence of an exact sequence of $\ZZ'[G]$-modules
   \[
  \xymatrix@1@C+3ex{ 0\ar[r]& \ZZ'[G](1-(-1)^{\rk(A_k)}\tau)\ar[r] &  \ZZ'\otimes_{\ZZ} A(F)_{\rm tf}\ar[r] & X\ar[r]& 0,}
  \]
  where the group $X$ is both finite and of order prime to $p$.

  Since the group $A(F)_{\rm tor}$ is also finite of order prime to $p$ (by hypothesis~\ref{hyp_a}) it follows that any point $Q$ of
  $A(F)$ whose projection in $A(F)_{\rm tf}$ is equal to the image of $1-(-1)^{\rk(A_k)}\tau$ multiplied by a large enough power of 2 has the properties described above.

The above description of $\ZZ'\otimes_{\ZZ}A(F)$ also implies that the $\CC[G]$-module $\CC\otimes_{\ZZ}A(F)$ is isomorphic to $\CC[G]\bigl(1-(-1)^{\rk(A_k)}\tau\bigr)$
and the claimed formulas for $r_\psi$ are then easily verified by explicit computation.
\end{proof}

For each subgroup $H$ of $G$ and character $\rho$ in $\Ir(H)$ we set
\[ T_\rho:= \sum_{h \in H}\rho(h^{-1})h \in \zeta\bigl(\CC[H]\bigr).\]
For any point $R$ of $A(F)$ and any $\psi \in \Ir(G)$ we then define a non-zero complex number
 \begin{equation*}
  h_{F,\psi}(R) := \frac{\psi(1)}{2|G|} \cdot\Bigl\langle T_\psi (R), \, T_{\check\psi}(R) \Bigr\rangle_F
 \end{equation*}
 where $\langle \cdot,\cdot\rangle_F$ is the $\CC$-linear extension of the N\'eron-Tate height on $A$, defined relative to the field $F$.

In the next result we assume the hypotheses of Proposition~\ref{Q_prop} to be satisfied and fix a point $Q$ as in that result. For each $\psi\in\Ir(G)$ we then obtain a non-zero complex number $\mathcal{Q}_\psi = \mathcal{Q}_{Q,\psi}$ by setting
   \begin{equation}\label{Qdef_eq}
     \mathcal{Q}_\psi = u_{\psi}\cdot \frac{ \sqrt{ d_\psi} \cdot \Lstar_{S_{\rm r}} (A,\psi,1)}{\Omega(A,\psi)\cdot H_{\psi}}
\end{equation}
where the quantity $u_\psi$ is as in~\eqref{ggs def}. Further we define
\[
 d_{\eins} =  \vert d_k\vert,\quad
 d_{\epsilon} = \vert d_K/d_k\vert, \quad\text{ and }\quad
 d_{\Ind^G_P(\chi)} = \vert d_{K}\vert\cdot N\!f(\chi)
\]
for all $\chi\in \Ir(P)\setminus\{\eins_P\}$ and
\[ H_{\rho_A} = 1 \qquad \text{ and }\qquad H_\psi = h_{F,\psi}(Q), \qquad \text{for all $\psi\neq \rho_A$.}
\]
Here $d_E$ denotes the absolute discriminant of a number field $E$ and for any finite dimensional complex character $\psi$ of $G_E$ we write $N\!f(\psi)$ for the absolute norm of its Artin conductor.

We also note that hypothesis~\ref{hyp_i} combines with Proposition~\ref{Q_prop} to imply that the leading term  $L^\star_{S_{\rm r}}(A,\psi,1)$ is equal to the value $L_{S_{\rm r}}(A,\psi,1)$ for $\psi = \rho_A$ and to the first derivative $L'_{S_{\rm r}}(A,\psi,1)$ for  $\psi$ in $\Ir(G)\setminus \{\rho_A\}$.
\begin{theorem}\label{exp cong}
 Fix an odd prime $p$, a dihedral extension $F/k$ of degree $2p^n$ and an elliptic curve $A$ over $k$. Assume that $A$ and $F$ satisfy the hypotheses~\ref{hyp_a}--\ref{hyp_h}, that all places above $p$ split in $K/k$, that $\rk(A_K)=1$ and that $\sha_p(A_{K}) =  0$. Fix a point $Q$ in $A(F)$ as given by Proposition~\ref{Q_prop}.

 Then the equality~\eqref{etnc equiv} is valid if and only if the following conditions are satisfied.
   \begin{enumerate}
     \item[(i)]\label{exp cong_i}
       For each $\psi$ in $\Ir(G)$ the number $\mathcal{Q}_\psi$ defined above belongs to $\QQ(\psi)$, is a unit at all primes above $p$ and satisfies $(\mathcal{Q}_\psi)^\alpha = \mathcal{Q}_{\psi^\alpha}$ for all $\alpha$ in $\Gal\bigl({\QQ(\psi)/\QQ}\bigr)$.
     \item[(ii)]\label{exp cong_ii}
       For all $\pi \in P$ one has a congruence
       \begin{equation}\label{first congs}
          \mathcal{Q}_{\eins_G}\cdot \mathcal{Q}_{\epsilon} \equiv
           -\sum_{\chi \in \Ir(P)\setminus\{\eins_P\}} \check\chi(\pi)\, \mathcal{Q}_{\Ind^G_P(\chi)} \pmod{p^n\ZZ_{(p)}}
       \end{equation}
     where $\ZZ_{(p)}$ denotes the localisation of $\ZZ$ at $p$.
   \end{enumerate}
\end{theorem}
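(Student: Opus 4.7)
The plan is to apply Proposition~\ref{etnc proj case} in the present setting. Since $A$ is an elliptic curve we have $A = A^t$, and Proposition~\ref{Q_prop} already yields that $\sha_p(A_F) = 0$ and that $\Sel_p(A_F)^\vee$ is a projective $\ZZ_p[G]$-module. Via the exact sequence~\eqref{sha-selmer} this forces $A(F)_p^\ast$, and hence $A(F)_p = A^t(F)_p$, to be projective, so the hypotheses of Proposition~\ref{etnc proj case} hold. Because $\Fit_{\ZZ_p[G]}\bigl(\sha_p(A_F)\bigr) = \ZZ_p[G]$, the criterion given there reduces the validity of~\eqref{etnc equiv} to the single statement that $\calLstar_{A,F/k,j,\iota_p}$ belongs to $\Nrd_{\QQ_p[G]}\bigl(K_1(\ZZ_p[G])\bigr)$ for some isomorphism $\iota_p : A(F)^\ast_p \cong A(F)_p$ furnished by Swan's theorem.

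Secondly, I would make $\calLstar_{A,F/k,j,\iota_p}$ explicit using the point $Q$ from Proposition~\ref{Q_prop}. Since the $\QQ[G]$-module $\QQ \otimes_\ZZ A(F)$ is generated by the class of $Q$, the set $\{T_\psi(Q)\}_{\psi \ne \rho_A}$ is a $\CC[G]$-basis of $\CC \otimes_\ZZ A(F)$, and a direct computation of the N\'eron-Tate pairing on this basis shows that, after choosing $\iota_p$ to identify $Q$ with its dual element relative to this basis (absorbing any prime-to-$p$ ambiguity into the freedom of $\iota_p$), the regulator $R^{{\rm NT},j}_{\psi,\iota_p}(A_{/F})$ equals the height term $H_\psi$ for every $\psi \in \Ir(G)$. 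Meanwhile the classical identity $\tau(\QQ, \Ind^\QQ_k \psi) \cdot \tau(\QQ, \Ind^\QQ_k \check{\psi}) = W(\psi) \cdot d_\psi$ (with $W(\psi)$ the Artin root number, which is a root of unity since the characters of $G$ are all real-valued) allows $\tau^\ast(\QQ, \Ind^\QQ_k \psi)$ to be rewritten as $u_\psi \cdot W(\psi) \cdot \sqrt{d_\psi}$, and the root-of-unity tuple $(W(\psi)/w_\infty(\psi))_\psi$ lies automatically in $\Nrd_{\QQ_p[G]}\bigl(K_1(\ZZ_p[G])\bigr)$. Recalling that $d = 1$ and that $L^\ast_{S_{\rm r}}(A,\check\psi,1) = L^\ast_{S_{\rm r}}(A,\psi,1)$, one therefore finds that $\calLstar_{A,F/k,j,\iota_p}$ is represented modulo $\Nrd_{\QQ_p[G]}\bigl(K_1(\ZZ_p[G])\bigr)$ by the tuple $(\mathcal{Q}_\psi)_{\psi \in \Ir(G)}$.

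The final step is then to invoke an explicit description of $\Nrd_{\QQ_p[G]}\bigl(K_1(\ZZ_p[G])\bigr)$ inside $\prod_{\psi \in \Ir(G)} \QQ_p(\psi)^\times$ for the dihedral group $G$ of order $2p^n$. Because $P$ is an abelian $p$-group on which $\tau$ acts by inversion, $\ZZ_p[G]$ is a `hybrid order' in the sense of Johnston--Nickel~\cite{jn}, and a classical congruence characterisation shows that a tuple $(x_\psi)$ lies in $\Nrd_{\QQ_p[G]}\bigl(K_1(\ZZ_p[G])\bigr)$ if and only if each $x_\psi$ satisfies the Galois-integrality and equivariance of condition~(i), together with one congruence mod $p^n$ for each $\pi \in P$ relating the components $x_\eins, x_\epsilon$ to the components $x_{\Ind^G_P(\chi)}$. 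Evaluating this family of congruences against the character formula $\eins + \epsilon = \sum_{\chi \in \Ir(P)}\Ind^G_P(\chi)|_P \cdot \delta_{\cdot,\pi}$-style decomposition on each $\pi \in P$ produces exactly the relation~\eqref{first congs}. Matching this description to the tuple $(\mathcal{Q}_\psi)$ therefore gives the stated equivalence.

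The main obstacle, in my view, is the final step: the precise derivation of the dihedral congruence description of $\Nrd_{\QQ_p[G]}\bigl(K_1(\ZZ_p[G])\bigr)$ in the exact form stated (in particular the fact that $\pi$ ranges over all of $P$ rather than only over a generating set when $P$ is non-cyclic), together with the careful book-keeping of root-of-unity factors coming from $w_\infty(\psi)$, the Artin root numbers $W(\psi)$, and the sign ambiguities inherent in the choice of $\iota_p$, which must be tracked to ensure that $\calLstar_{A,F/k,j,\iota_p}$ is represented \emph{exactly} by $(\mathcal{Q}_\psi)_\psi$ modulo $\Nrd_{\QQ_p[G]}\bigl(K_1(\ZZ_p[G])\bigr)$ with no stray unit.
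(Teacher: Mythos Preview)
Your overall architecture matches the paper's: reduce to Proposition~\ref{etnc proj case}, use $\sha_p(A_F)=0$ to convert the Fitting-invariant criterion into the single condition $\calLstar_{A,F/k,j,\iota_p}\in\ker(\delta_{G,p})=\Nrd_{\QQ_p[G]}\bigl(K_1(\ZZ_p[G])\bigr)$, identify this element with $(\mathcal{Q}_\psi)_\psi$, and then describe membership in that image explicitly. The differences, and the genuine gaps, lie in how Steps~2 and~3 are executed.

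In Step~2 you propose to work \emph{modulo} $\Nrd_{\QQ_p[G]}\bigl(K_1(\ZZ_p[G])\bigr)$ and assert that the tuple $\bigl(W(\psi)/w_\infty(\psi)\bigr)_\psi$ ``lies automatically'' there. This is not automatic: an arbitrary Galois-stable tuple of roots of unity need not satisfy the congruences that cut out $\Nrd_{\QQ_p[G]}\bigl(K_1(\ZZ_p[G])\bigr)$ inside $\zeta(\mathfrak{M}_p)^\times$, so absorbing such a tuple could corrupt the very congruence~\eqref{first congs} you are trying to isolate. The paper avoids this entirely by proving in Lemma~\ref{last one} the \emph{exact} equality $\tau^*(\QQ,\Ind^\QQ_k\psi)/w_\infty(\psi)=u_\psi\sqrt{d_\psi}$ for each $\psi$, using inductivity of Gauss sums together with the Fr\"ohlich--Queyrut theorem to show the relevant Artin root numbers are equal to~$1$ (not merely roots of unity). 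With that, and an explicit choice of $\iota_p$ sending the dual basis element $\theta_Q$ to $Q$, one gets $\calLstar_{A,F/k,j,\iota_p}=j_*\bigl(\sum_\psi\mathcal{Q}_\psi e_\psi\bigr)$ on the nose, with no stray unit to track.

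In Step~3 you appeal to $\ZZ_p[G]$ being a ``hybrid order'' in the sense of Johnston--Nickel. This is not the route the paper takes, and it is not clear that the hybrid-order machinery applies here in a way that yields exactly~\eqref{first congs}. The paper instead proves a self-contained Lemma~\ref{general breuning}: it shows (via Dress's induction theorem and Breuning's work on dihedral restriction) that $\res^G_{P,0}:K_0(\ZZ_p[G],\QQ_p[G])_{\rm tor}\to K_0(\ZZ_p[P],\QQ_p[P])_{\rm tor}$ is injective, and computes the effect of restriction on reduced norms. This reduces the question to the \emph{abelian} group $P$, where membership in $\ZZ_p[P]^\times$ is elementary (part~(iii) of the lemma), and the congruences~\eqref{first congs} drop out directly with $\pi$ ranging over all of $P$. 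Your concern about $P$ possibly non-cyclic is thus handled by passing to $P$ rather than by analysing $G$ directly.
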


\begin{remark}\label{gen exp cong}
 If $Q'$ is any element of $A(F)$ with $T_\psi(Q') \neq 0$ for all $\psi \in \Ir(G)\setminus \{\rho_A\}$, then for each $\psi \in \Ir(G)$ one can define a non-zero complex number $\mathcal{Q}'_\psi = \mathcal{Q}_{Q',\psi}$ just as in~\eqref{Qdef_eq}.
Our proof of Theorem~\ref{exp cong} will actually show that if the conditions (i) and (ii) are valid with each $\mathcal{Q}_\psi$ replaced by $\mathcal{Q}'_\psi$, then $Q'$ is a $\ZZ_p[G]$-generator of $A(F)_p$ and the equality~\eqref{etnc equiv} is valid.
\end{remark}

\begin{remark}\label{n1_rem}
  If $n=1$, so the extension $F/k$ in Theorem~\ref{exp cong} is dihedral of degree $2p$, then each term $\check\chi(\pi)$ is congruent to $1$ modulo the unique prime above $p$ in $\QQ(\chi)$ and so the congruences~\eqref{first congs} reduce to a single congruence
  \begin{equation}\label{prime_cong_eq}
    \mathcal{Q}_{\eins_G}\cdot \mathcal{Q}_\epsilon \equiv
   -2 \sum_{\substack{\psi \in \Ir(G)\\ \dim(\psi) = 2 }} \mathcal{Q}_\psi\pmod{p\ZZ_{(p)}}.
  \end{equation}
\end{remark}

\begin{proof}[Proof of Theorem~\ref{exp cong}]
 It suffices to prove that the criterion of Proposition~\ref{etnc proj case} is valid if and only if both the condition (i) and the congruences
 in~\eqref{first congs} are valid. Now, since $\sha_p(A_F) =  0$ (by \cite[Corollary 2.10(ii)]{Selmerstr}), the criterion of Proposition~\ref{etnc proj case} is equivalent to asking that the element
 $\calLstar_{A,F/k,j,\iota_p}$ belongs to  $\ker(\delta_{G,p})$.
In view of Lemma~\ref{last one} below it is thus enough to show that the condition (i) and the congruences in~\eqref{first congs} are valid if and only
if the element $\mathcal{L}' := j_*(\sum_{\psi \in \Ir(G)} \mathcal{Q}_\psi e_\psi)$ belongs to  $\ker(\delta_{G,p})$.

To investigate the element $\delta_{G,p}(\mathcal{L}')$ we fix a maximal $\ZZ_p$-order $\mathfrak{M}_p$ in $\QQ_p[G]$ that contains $\ZZ_p[G]$. Then  $\delta_{G,p}(\mathcal{L}')$
belongs to the subgroup $K_0\bigl(\ZZ_p[G],\QQ_p[G]\bigr)_{\rm tor}$ of $K_0\bigl(\ZZ_p[G],\CC_p[G]\bigr)$ if and only if
$\mathcal{L}'\in \Nrd_{\QQ_p[G]}\bigl(\mathfrak{M}_p^\times\bigr)$ and this condition is satisfied if and only if the conditions of  Theorem~\ref{exp cong}(i) are valid
(for more details of these equivalences see the proof of~\cite[Lemma 11]{bufl99}).

It thus suffices to show  that an element
$\mathcal{E} = \sum_{\psi \in \Ir(G)}\mathcal{E}_\psi e_{\psi}$ of $\Nrd_{\QQ_p[G]}\bigl(\mathfrak{M}_p^\times\bigr)\subset \bigoplus_{\psi \in \Ir(G)}\QQ_p(\psi)$
belongs to $\ker(\delta_{G,p})$ if and only if the congruences in \eqref{first congs} are valid with each term $\mathcal{Q}_\psi$ replaced by $\mathcal{E}_\psi$ and
with $\ZZ_{(p)}$ replaced by $\ZZ_p$.
But, from the results of Lemma~\ref{general breuning}(i) and (ii) below, one has  $\delta_{G,p}(\mathcal{E}) = 0$ if and only if
$\res(\mathcal{E}) \in \ZZ_p[P]^\times$, and by Lemma~\ref{general breuning}(iii) this is true if and only if the congruences in~\eqref{first congs} are valid after
making the changes described above.

This therefore completes the proof of Theorem~\ref{exp cong}.
\end{proof}

\begin{lemma}\label{last one}
 There exists an isomorphism of $\ZZ_p[G]$-modules $\iota_p: A(F)^*_p \to A(F)_p$ such that $\calLstar_{A,F/k,j,\iota_p}$ is equal to the element $\mathcal{L}' := j_*(\sum_{\psi \in \Ir(G)} \mathcal{Q}_\psi e_\psi)$ defined above.
\end{lemma}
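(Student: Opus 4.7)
The strategy is to exploit the extremely rigid structure of $A(F)_p$ that is provided by Proposition~\ref{Q_prop} in order to exhibit a specific isomorphism $\iota_p$, then to compute both sides termwise at each $\psi\in\Ir(G)$ and reconcile them via standard functional-equation identities for Galois Gauss sums.

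First I would use Proposition~\ref{Q_prop} together with hypothesis~\ref{hyp_a} (which forces $A(F)_{\rm tor}$ to be of order prime to $p$) to identify the $\ZZ_p[G]$-module $A(F)_p$ with the projective module $\ZZ_p[G]\cdot e_\tau$, where $e_\tau := (1-(-1)^{\rk(A_k)}\tau)/2$ (an idempotent since $p$ is odd) and where $Q$ maps to $e_\tau$. Let $Q^{\vee}\in A(F)_p^*$ denote the $\ZZ_p[G]$-generator of $A(F)_p^*$ corresponding under the contragredient action to the functional $\ZZ_p[G]\cdot e_\tau\to\ZZ_p$ which sends $g\cdot e_\tau$ to the coefficient of $e_\tau$. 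I then \emph{define} $\iota_p$ to be the unique isomorphism sending $Q^\vee$ to $Q$.

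Next I would compute $R^{{\rm NT},j}_{\psi,\iota_p}(A_{/F})$. For $\psi=\rho_A$ the space $(V_\psi\otimes_\ZZ A(F))^G$ is zero, so the determinant is $1$; for $\psi\neq\rho_A$ this space is one-dimensional over $\CC_p$ and one may take as basis a vector built from $T_{\check\psi}(Q)$. Unwinding the definition of $\lambda^{{\rm NT},j}_{A,F}$ (height pairing) and of $\iota_p$ (identity on the ``$Q$--generator''), together with the calculation $T_\psi=(|G|/\psi(1))\,e_\psi$, I expect the resulting scalar to equal exactly $j_\ast(h_{F,\psi}(Q))=j_\ast(H_\psi)$, so that the reciprocal of the regulator contributes the factor $1/H_\psi$ to $\calLstar_{A,F/k,j,\iota_p}$ that appears in $\mathcal{Q}_\psi$.

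Finally I would match the remaining global factors. Using $\Lstar_{S_{\rm r}}(A,\check\psi,1)=\Lstar_{S_{\rm r}}(A,\psi,1)$ (valid because every $\psi\in\Ir(G)$ is self-dual in the dihedral setting, since conjugation by $\tau$ inverts $P$), the task reduces to the identity
\[
\frac{\tau^{\ast}(\QQ,\Ind_k^\QQ\psi)}{w_\infty(\psi)} \;=\; u_\psi\,\sqrt{d_\psi}
\]
(up to a sign, which one may absorb in the choice of square root). For this I would combine three ingredients: the standard formula $\tau(\QQ,\chi)=W(\chi)\sqrt{N\!f(\chi)}$; the conductor–discriminant identity $N\!f(\Ind_k^\QQ\psi)=|d_k|^{\psi(1)}N_{k/\QQ}f(\psi)$, which specialises in the three cases of Theorem~\ref{exp cong} to the stated values of $d_\psi$; and the observation that, at the unique real place of $\QQ$, the archimedean local root number of $\Ind_k^\QQ\psi$ is exactly $w_\infty(\psi)^{-1}$, so that $W(\Ind_k^\QQ\psi)w_\infty(\psi)$ collapses to a product of local root numbers at ramified finite primes that is absorbed by the correction factor $u_\psi$ in the definition of $\tau^\ast$. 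The main obstacle I anticipate is precisely this last step: carefully tracking units and signs at ramified finite places to verify that the contribution of $u_\psi$ (coming from the non-ramified characteristics $u_v$) indeed matches the ratio of $W(\Ind_k^\QQ\psi)w_\infty(\psi)$ to an element of the kernel of $\delta_{G,p}$, and showing that the ambiguity in $\sqrt{d_\psi}$ is compatible with the indeterminacy in $\iota_p$ (modification by units in $K_1(\ZZ_p[G])$), so that the specific $\iota_p$ constructed above yields the stated equality with $\mathcal{L}'$.
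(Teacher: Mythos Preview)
Your overall architecture matches the paper's: construct $\iota_p$ by sending a dual generator $Q^\vee$ (the paper's $\theta_Q$) to $Q$, compute $R^{{\rm NT},j}_{\psi,\iota_p}(A_{/F})=j_*(h_{F,\psi}(Q))$ via a resolvent calculation, and then reduce to an identity for $\tau^*(\QQ,\Ind_k^\QQ\psi)/w_\infty(\psi)$. Your observation that every $\psi\in\Ir(G)$ is self-dual in the dihedral case, so $\Lstar_{S_{\rm r}}(A,\check\psi,1)=\Lstar_{S_{\rm r}}(A,\psi,1)$, is correct and is used implicitly by the paper.

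However, your plan for the Gauss-sum identity has a real gap. You propose that the finite local root numbers of $\Ind_k^\QQ\psi$ are ``absorbed by the correction factor $u_\psi$'', but $u_\psi=\prod_{v\in S_{\rm r}}\det(-\Fr_w\mid V_\psi^{I_w})$ is a quite different quantity from a product of local root numbers at ramified places, and in general there is no reason for them to agree. The paper does \emph{not} absorb anything: it proves the exact equality $\tau^*(\QQ,\Ind_k^\QQ\psi)/w_\infty(\psi)=u_\psi\sqrt{d_\psi}$ case by case, using Martinet's inductivity formula $\tau(\QQ,\Ind_k^\QQ\rho)=(i^{|S_\CC|}\sqrt{|d_k|})^{\rho(1)}\tau(k,\rho)$ together with, crucially, the theorem of Fr\"ohlich--Queyrut that $W(\check\chi)=1$ for $\chi\in\Ir(P)\setminus\{\eins_P\}$ (since $\Ind_P^G\chi$ is orthogonal). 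This is the key input you are missing for the two-dimensional characters.

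Your fallback of modifying $\iota_p$ by a unit to soak up leftover signs is also not safe as stated. The lemma asserts an \emph{equality} $\calLstar_{A,F/k,j,\iota_p}=\mathcal{L}'$, and the automorphisms of $A(F)_p$ you can twist by contribute, at each $\psi$, the $\psi$-component of $\Nrd_{\QQ_p[G]}$ of a unit of $\ZZ_p[G]$; a stray root-number sign at a single irreducible component is not obviously of this form. So you really need the exact Gauss-sum identity, and for that you should invoke Fr\"ohlich--Queyrut rather than attempt to match root numbers against the $u_\psi$.
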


\begin{proof}
We claim first that for each $\rho \in \Ir(G)$ there is an equality
\begin{equation}\label{gauss sums_eq}
  \frac{\tau^* \bigl(\QQ,\Ind_k^\QQ\rho\bigr)}{w_\infty(\rho)} =
  \begin{cases}
     u_{{\eins}_G}\,\sqrt{\vert d_k\vert } = (-1)^{|S_{\rm r}|}\,\sqrt{\vert d_k\vert },        &\text{if $\rho = {\eins}_G$}\\
     u_\epsilon\,\sqrt{\vert d_{K}/d_k\vert } = (-1)^{|S_{\rm r}'|}\,\sqrt{\vert d_K/d_k\vert },&\text{if $\rho = \epsilon$}\\
     u_\rho\,\sqrt{\vert d_{K}\vert \,Nf(\chi)},                                    &\text{if $\rho = \Ind_P^G\chi$}
\end{cases}
\end{equation}
where $\chi\in \Ir(P)\setminus\{\eins_P\}$ and with $S_{\rm r}'$ denoting the subset of $S_{\rm r}$ comprising places which split in $K/k$.

To prove this we note that for each $\rho$ in $\Ir(G)$ one has
\begin{equation}\label{first step}
 u_\rho^{-1} \cdot \tau^*\bigl(\QQ,\Ind_k^\QQ\rho\bigr) = \tau\bigl(\QQ,\Ind_k^\QQ\rho\bigr) = \Bigl(i^{|S_\CC|}\,\sqrt{\vert d_k\vert }\Bigr)^{\rho(1)}\cdot\tau(k,\rho),
\end{equation}
where the first equality follows straight from the definition~\eqref{ggs def} and the second from the result of~\cite[Theorem 8.1(iii)]{martinet}.

Now if $\rho = \eins_G$, then $\tau(k,\rho) = 1$ and $V_\rho^{I_w} = \CC_p$ for all $v$ in $S_{\rm r}$ so  $u_\rho = (-1)^{|S_{\rm r}|}$, whilst it is clear that $w_\infty(\rho) = i^{|S_\CC|}$, and so in this case the equality~\eqref{gauss sums_eq} is an immediate consequence of~\eqref{first step}.

Next we note that $\epsilon = \Ind_P^G({\eins}_P) - {\eins}_G$ and hence that~\cite[Theorem 8.1(iii)]{martinet} implies
\[ \tau(k,\epsilon) = \frac{\tau\bigl(k,\Ind_P^G({\eins}_P)\bigr)}{\tau(k,{\eins}_G)} = \tau\bigl(k,\Ind_P^G({\eins}_P)\bigr) = i^{|S_\RR^{\rm r}|}\sqrt{d_{K/k}} = i^{|S_\RR^{\rm r}|}\cdot \frac{\sqrt{\vert d_{K}\vert }}{\vert d_k\vert },\]
where $S_\RR^{\rm r}$ is the set of real places of $k$ that ramify in $K/k$ and $d_{K/k}$ the absolute norm of the different of $K/k$.
One then obtains the claimed equality~\eqref{gauss sums_eq} by substituting this formula for $\tau(k,\epsilon)$ into~\eqref{first step} and then using that fact that $w_\infty(\epsilon) = i^{|S_\CC|+ |S_\RR^{\rm r}|}$ whilst $u_\epsilon = (-1)^{|S'_{\rm r}|}$ since $\det\bigl(-\Fr_w\mid V_\epsilon^{I_w}\bigr)$ is equal to $-1$ for $v\in S_{\rm r}'$ and to $1$ for $v\in S_{\rm r}\setminus S_{\rm r}'$.

Finally we assume that $\rho = \Ind_P^G\chi$ with $\chi\in \Ir(P)\setminus\{{\eins}_P\}.$ Then, just as above, one finds that
\begin{align*} \tau(k,\rho) = &\tau(k,\Ind_P^G({\eins}_P))\cdot \tau(K,\chi) \\
= &i^{|S_\RR^{\rm r}|}\,\frac{\sqrt{\vert d_{K}\vert }}{\vert d_k\vert }\,\tau(K,\chi) \\
= &i^{|S_\RR^{\rm r}|}\,\frac{\sqrt{\vert d_{K}\vert }}{\vert d_k\vert }\,\frac{W(\check{\chi})}{W_\infty(\chi)}\,\sqrt{Nf(\chi)},\end{align*}
where $W(\check{\chi})$ is the Artin root number of $\check\chi$ and $W_\infty(\chi)$ the infinite part of the Artin root number of $\chi$ and the final equality follows from the very definition of $\tau(K,\chi)$. Now $W_\infty(\chi) = 1$ since no archimedean place ramifies in $F/K$.
In addition, the inductivity of Artin root numbers implies $W(\check\chi) = W(\check\rho)$ and so, since $\check\rho$ is an orthogonal character, the main result of Fr\"ohlich and Queyrut in~\cite{fq} implies that $W(\check\chi)=1$. Thus, upon substituting the last displayed expression into~\eqref{first step}, and noting that $w_\infty(\rho) = i^{\rho(1)|S_\CC| + |S_\RR^{\rm r}|}$, one obtains the claimed equality~\eqref{gauss sums_eq} in this case.

Having proved~\eqref{gauss sums_eq}, we now write $\theta_Q$ for the generator of the $\ZZ_p[G]$-module $A(F)_p^*$ that sends $Q$ to $1$ and $g(Q)$ to zero for each non-trivial element $g$ of $P$. We then define $\iota_p$ to be the isomorphism of $\ZZ_p[G]$-modules $A(F)^*_p \to A(F)_p$ that sends $\theta_Q$ to $Q$.

One computes that $\bigl((\CC_p\otimes_{\ZZ_p}\iota_p)\circ \lambda^{{\rm NT},j}_{A,F}\bigr)(Q)= j_*(R_Q)(Q)$, where $R_Q$ is the resolvent element $\sum_{g \in P} \bigl\langle Q,g(Q)\bigr\rangle_F\cdot g $ in $\CC_p[G]$, and also that $\theta_Q\bigl(\iota_p(\vartheta)\bigr) = \vartheta(Q)$ for all $\vartheta\in A(F)_p^*$.

Proposition~\ref{Q_prop} implies $r_{\rho_A} = 0$ so $R_{\rho_A,\iota_p}^{{\rm NT},j}(A_{/F}) = 1 = H_{\rho_A}$ and also that for each $\psi \in \Ir(G)\setminus \{\rho_A\}$, the complex vector space $(V_\psi \otimes_\ZZ A(F))^G = \CC[G]\cdot T_{\psi}(Q)$ has dimension one and hence $ R_{\psi,\iota_p}^{{\rm NT},j}(A_{/F}) \cdot T_{\psi}(Q) = \bigl((\CC_p\otimes_{\ZZ_p}\iota_p) \circ \lambda^{{\rm NT},j}_{A,F}\bigr)\bigl(T_{\psi}(Q)\bigr).$

Now, for any $\vartheta\in A(F)_p^*$ and  $P\in A(F)$ one has $T_{\check{\psi}}(\vartheta)(P) = \vartheta\bigl( T_{\check\psi}^{\#} (P) \bigr)$ where we write  $x\mapsto x^{\#}$ for the $\ZZ_p$-linear involution on $\ZZ_p[G]$ that inverts elements of $G$. In addition,
for each $\psi\neq \rho_A$ one has
\begin{align*}
  T_{\check\psi}(\theta_Q)\Bigl(T_{\psi}(Q)\Bigr) &=
  \sum_{g \in G}\sum_{h \in G} \check{\psi}(g^{-1})\, \psi(h^{-1})\, \theta_Q\Bigl( g^{-1} \, h\, Q \Bigr) \\
  &= \sum_{g\in G} \check\psi(g^{-1})\psi(g^{-1}) +  (-1)^{1-\rk(A_k)}\cdot\sum_{g\in G} \check\psi(g^{-1}) \, \psi( \tau g^{-1} )
\end{align*}
because $g^{-1}hQ$ is a multiple of $Q$ only when $h=g$ or $h=g\tau$. The first sum here is always equal to $\vert G\vert$, while the second is equal to $\vert G\vert$ for $\psi =\eins$, to $-\vert G\vert$ for $\psi = \epsilon$ and to $0$ otherwise.
Hence, writing $\vartheta$ for $\lambda^{{\rm NT},j}_{A,F}\bigl(T_\psi(Q)\bigr)$, we find that

\begin{align*}
   \frac{2|G|}{\psi(1)} \,R_{\psi,\iota_p}^{{\rm NT},j}&(A_{/F}) = T_{\check\psi}(\theta_Q)\Bigl( (\CC_p\otimes_{\ZZ_p}\iota_p)\circ \lambda^{{\rm NT},j}_{A,F} \bigl(T_\psi (Q) \bigr)\Bigr)\\ &=  \theta_Q\Bigl( T_{\check\psi}^{\#} \bigl( (\CC_p\otimes_{\ZZ_p}\iota_p) (\vartheta) \bigr) \Bigr)
  =  \theta_Q\Bigl( (\CC_p\otimes_{\ZZ_p}\iota_p) \bigl( T_{\check\psi}^{\#} (\vartheta) \bigr) \Bigr)\\
   &= \bigl( T_{\check\psi}^{\#} (\vartheta) \bigr) (Q)
  = \vartheta \bigl(T_{\check\psi}(Q)\bigr)
   = j(\bigl\langle T_{\psi}(Q), T_{\check \psi}(Q) \bigr\rangle_F) = \frac{2|G|}{\psi(1)} \,j(h_{F,\psi}(Q)). \label{second step}
\end{align*}
Upon substituting this equality and~\eqref{gauss sums_eq}
into the definition of $\calLstar_{A,F/k,j,\iota_p}$ one obtains the element $\mathcal{L}'$, as required.
\end{proof}

In the next result we write $\mathcal{O}_L$ for the valuation ring of a finite extension $L$ of $\QQ_p$.
\begin{lemma}\label{general breuning}
 Write $\mathfrak{M}_p'$ for the integral closure of $\ZZ_p$ in $\QQ_p[P]$.
\begin{enumerate}
\item[(i)]\label{general breuning_i}
  The following diagram commutes
 \begin{equation*}\label{gen br}
  \xymatrix@C+2ex{ \Nrd_{\QQ_p[G]}\bigl(\mathfrak{M}_p^\times\bigr) \ar[r]^{\res} & \mathfrak{M}'^\times_p\\
    \im\Bigl(\alpha: K_1\bigl(\mathfrak{M}_p\bigr) \to K_1\bigl(\QQ_p[G]\bigr)\Bigr) \ar[r]^{\res'} \ar[u]^{\Nrd_{\QQ_p[G]}} \ar[d]_{\partial_{G,p}'} &
    \im\Bigl(\beta: K_1\bigl(\mathfrak{M}'_p\bigr) \to K_1\bigl(\QQ_p[P]\bigr)\Bigr) \ar[u]_{ \Nrd_{\QQ_p[P]}}   \ar[d]^{\partial_{P,p}'}\\
    K_0\bigl(\ZZ_p[G],\QQ_p[G]\bigr)_{\rm tor} \ar[r]^{\res^G_{P,0}} & K_0\bigl(\ZZ_p[P],\QQ_p[P]\bigr)_{\rm tor}.}
 \end{equation*}
 Here $\res$ is the homomorphism that sends an element $\sum_{\psi \in \Ir(G)} \xi_\psi e_\psi$ in  $\Nrd_{\QQ_p[G]}(\mathfrak{M}_p^\times) \subset \zeta\bigl(\CC_p[G]\bigr)^\times = \sum_{\psi}\CC_p^\times e_\psi$ to
 $\xi_{\eins_G}\xi_{\epsilon} e_{\eins_P} + \sum_{\chi\in \Ir(P)\setminus\{\eins_P\}}\xi_{\Ind_P^G\chi}e_\chi$, next $\alpha$ and $\beta$ are the natural scalar extension homomorphisms, $\res'$ and $\res^G_{P,0}$ the natural restriction homomorphisms and $\partial_{G,p}'$ and $\partial_{P,p}'$ the restrictions of the connecting homomorphisms $\partial_{G,p}$ and $\partial_{P,p}$.
\item[(ii)]\label{general breuning_ii}
 The homomorphism $\res^G_{P,0}$ is injective.
\item[(iii)]\label{general breuning_iii}
  Fix $\mathcal{E} := \sum_{\chi \in \Ir(P)}\mathcal{E}_\chi e_\chi$ inside $ \mathfrak{M}'^\times_p = \sum_{\chi\in \Ir(P)}\mathcal{O}_{\QQ_p(\chi)}^\times e_\chi$. Then $\mathcal{E}$ belongs to $\ZZ_p[P]^\times$ if and only if $\sum_{\chi \in \Ir(P)}\chi(\pi)^{-1}\mathcal{E}_\chi$ belongs to $|P|\cdot \ZZ_p$ for all $\pi$ in $P$.
\end{enumerate}
\end{lemma}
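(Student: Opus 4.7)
My plan is to treat the three parts separately: (iii) via Fourier inversion on the abelian $p$-group $P$, (i) via naturality in the localisation sequence, and (ii) via a transfer/restriction argument exploiting that $[G:P]=2$ is coprime to $p$. I expect (ii) to be the main obstacle.

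For (iii), I would note that $\mathfrak{M}_p'\cong \prod_{\chi\in\Ir(P)}\mathcal{O}_{\QQ_p(\chi)}$ and that $\ZZ_p[P]$ embeds via character evaluation $\pi\mapsto(\chi(\pi))_\chi$. The classical orthogonality relations yield the Fourier inversion formula
\begin{equation*}
 |P|\cdot a_\pi \;=\; \sum_{\chi\in\Ir(P)} \chi(\pi)^{-1}\,\mathcal{E}_\chi
\end{equation*}
for the coefficients of $\mathcal{E}=\sum_\pi a_\pi \pi$, immediately giving the claimed characterisation of $\mathcal{E}\in\ZZ_p[P]$. To upgrade $\mathcal{E}\in\ZZ_p[P]$ to $\mathcal{E}\in\ZZ_p[P]^\times$ I would use that, since $P$ is a finite $p$-group, $\ZZ_p[P]$ is a local ring with residue field $\FF_p$ whose units are characterised by unit augmentation; since $\mathcal{E}\in\mathfrak{M}_p'^\times$ forces $\mathcal{E}_{\eins_P}\in\ZZ_p^\times$ and $\mathcal{E}_{\eins_P}$ coincides with the augmentation of $\mathcal{E}$, the upgrade is automatic.

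For (i), the bottom square commutes by the standard naturality of the boundary homomorphism $\partial$ in the localisation sequence under the change of scalars $\ZZ_p[P]\subseteq \ZZ_p[G]$ and $\QQ_p[P]\subseteq\QQ_p[G]$. For the top square, I would compute the reduced norm of left multiplication by a general $x=x_0+x_1\tau\in\QQ_p[G]$ viewed as $\QQ_p[P]$-linear endomorphism of the right $\QQ_p[P]$-module $\QQ_p[G]$ of rank two with basis $\{1,\tau\}$; a direct $2\times 2$ matrix computation yields determinant $x_0 x_0^\tau - x_1 x_1^\tau$ whose $\eins_P$- and $\chi$-components are precisely $\xi_{\eins_G}\xi_\epsilon$ and $\xi_{\Ind_P^G\chi}$ respectively, exactly matching the formula defining $\res$.

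For (ii), I would combine two ingredients. First, the torsion subgroup of $K_0(\ZZ_p[G],\QQ_p[G])$ is $p$-primary: this follows from the localisation sequence together with the fact that, under the reduced norm identification $K_1(\QQ_p[G])\cong\zeta(\QQ_p[G])^\times$, the torsion of the cokernel of $\Nrd K_1(\ZZ_p[G])$ is controlled by the finite $p$-group $\mathfrak{M}_p^\times/\ZZ_p[G]^\times$ (the failure of $\ZZ_p[G]$ to be maximal being purely $p$-local). Second, I would use the induction map $\ind\colon K_0(\ZZ_p[P],\QQ_p[P])\to K_0(\ZZ_p[G],\QQ_p[G])$ induced by $M\mapsto\ZZ_p[G]\otimes_{\ZZ_p[P]}M$; a Mackey-style computation shows that $\ind\circ\res^G_{P,0}$ acts on torsion as multiplication by $[G:P]=2$. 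Since $2$ is coprime to the odd prime $p$ and the torsion is $p$-primary, this forces $\res^G_{P,0}$ to be injective on torsion. The main difficulty is verifying the transfer identity $\ind\circ\res^G_{P,0}=[G:P]$ precisely: the na\"ive Mackey decomposition of $\ZZ_p[G]\otimes_{\ZZ_p[P]}M|_P$ produces a sum over $G/P$-coset representatives of $G$-conjugates of $M$, which must be shown to coincide with $[M]$ at the level of $K_0$-classes (valid since any $g\in G$ realises its conjugation action by an inner automorphism that is trivial on $K_0$).
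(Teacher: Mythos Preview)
Your arguments for parts (i) and (iii) are correct and essentially match the paper's (the paper defers the upper-square computation in (i) to a lemma of Breuning rather than computing the $2\times 2$ determinant directly, and for (iii) simply invokes $\ZZ_p[P]^\times = \ZZ_p[P]\cap\mathfrak{M}_p'^\times$ in place of your local-ring argument).

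Your argument for (ii), however, has a genuine gap. Both of your ingredients fail:

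\emph{The torsion is not $p$-primary.} Your claim that $\mathfrak{M}_p^\times/\ZZ_p[G]^\times$ is a finite $p$-group is false: the additive quotient $\mathfrak{M}_p/\ZZ_p[G]$ being $p$-primary does not force the unit-group quotient to be. Already for $G=S_3$, $p=3$, one computes $K_0(\ZZ_3[S_3],\QQ_3[S_3])_{\rm tor}\cong\ZZ/2$ (the obstruction is the congruence $\det\psi(x)\equiv\eins(x)\epsilon(x)\pmod 3$ coming from the decomposition matrix, which cuts out an index-$2$ subgroup of $(\FF_3^\times)^3$). Likewise $K_0(\ZZ_3[\ZZ/3],\QQ_3[\ZZ/3])_{\rm tor}\cong\ZZ/2$.

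\emph{The transfer identity is wrong.} For a $\ZZ_p[G]$-module $M$ one has the standard isomorphism $\ZZ_p[G]\otimes_{\ZZ_p[P]}M|_P\cong\ZZ_p[G/P]\otimes_{\ZZ_p}M$ with diagonal $G$-action, and since $|G/P|=2$ is prime to $p$ this splits as $M\oplus(\epsilon\otimes M)$, \emph{not} $M\oplus M$. Thus $\ind\circ\res^G_{P,0}=\id+\sigma_\epsilon$, where $\sigma_\epsilon$ is the twist-by-$\epsilon$ automorphism; on the $\ZZ/2$ torsion above this equals $2=0$, so the composite gives no information. Your ``Mackey decomposition into $G$-conjugates of $M$'' conflates the formula for $\Res\circ\Ind$ (where conjugates do appear, at the level of the \emph{subgroup}) with that for $\Ind\circ\Res$; as $\ZZ_p[G]$-modules the summands are not conjugates of $M$.

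The paper's proof of (ii) takes a different route: it invokes Dress's induction theorem to inject $K_0(\ZZ_p[G],\QQ_p[G])$ into the limit over $\QQ_p$-elementary subgroups, observes that these are either subgroups of $P$ or dihedral groups $\langle C,\tau'\rangle$ with $C\le P$ cyclic, and then cites a result of Breuning that restriction from such dihedral groups to $C$ is injective on torsion. This machinery is what is actually needed here.
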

\begin{proof} To prove claim (i) we fix a set of representatives $\Ir(P)^\dagger$ of the orbits of the action of $G_{\QQ_p}$ on $\Ir(P)$ and
abbreviate the functor $\Ind^G_P(-)$ to $\Indshort^G_P(-)$.

The commutativity of the lower square of the diagram follows from the naturality of the long exact sequences of relative $K$-theory. To consider the upper square we
use the $\QQ_p$-algebra isomorphisms
$$\omega_G:\QQ_p[G] \cong \QQ_p\times \QQ_p\times \prod_{\chi \in \Ir(P)^\dagger\setminus \{\eins_P\}}{\rm M}_2\bigl(\QQ_p(\Indshort^G_P(\chi))\bigr)$$
  and
$\omega_P: \QQ_p[P] \cong \prod_{\chi \in \Ir(P)^\dagger}\QQ_p(\chi)$ where  $\omega_G(g) = \bigl(1,\epsilon(g),(S_{\Indshort_P^G(\chi)}(g))_{\chi} \bigr)$,
with $S_{\Indshort_P^G(\chi)}$ a representation $G \to {\rm M}_2\bigl(\QQ_p(\Indshort^G_P(\chi))$ of character $\Indshort_P^G(\chi)$, and
$\omega_P(\pi) = (\chi(\pi))_\chi$ for each $g \in G$ and $\pi \in P$. Taken together these isomorphisms induce a diagram

\[
\xymatrix@R+2ex{ \QQ_p^\times \times \QQ_p^\times \times \prod\limits_{\substack{\chi \in \Ir(P)^\dagger\\ \chi\neq \eins_P } } \QQ_p(\Indshort^G_P(\chi))^\times \ar[r]^{\widetilde{\res}} &
  \prod\limits_{\chi \in \Ir(P)^\dagger} \QQ_p(\chi)^\times\\
 K_1(\QQ_p)\times K_1(\QQ_p)\times \prod\limits_{\substack{\chi\in \Ir(P)^\dagger\\ \chi\neq \eins_P}} K_1\Bigl ({\rm M}_2\bigl(\QQ_p(\Indshort^G_P(\chi))\bigr)\Bigr) \ar[u]^(0.4){(\det,\det,(\det)_\chi)} &
  \prod\limits_{\chi \in \Ir(P)^\dagger} K_1(\QQ_p(\chi))\ar[u]_(0.4){ (\det)_\chi  }\\
 K_1\bigl(\QQ_p[G]\bigr) \ar[r]^{\res^G_{P,1}} \ar[u]^(0.35){K_1(\omega_G)} & K_1\bigl(\QQ_p[P]\bigr)\ar[u]_(0.4){ K_1(\omega_P) } \\
}\]
in which the left and right hand composite vertical arrows are equal to $\Nrd_{\QQ_p[G]}$ and $\Nrd_{\QQ_p[P]}$ and $\widetilde{\res}$ is defined to make the diagram
commute. To complete the proof of claim (i) it thus suffices to show $\widetilde{\res}$ sends each element
$(\alpha,\tilde\alpha,(a_\chi)_\chi)$ to $(\alpha\tilde\alpha,(a_\chi)_\chi)$ and this follows from the argument used by Breuning in~\cite[Lemma 3.9]{Breuning}.

To prove claim (ii) we recall a group is said to be $\QQ_p$-elementary if it is isomorphic to a group $(\ZZ/n\ZZ) \rtimes J$ where $J$ is an
$\ell$-group for some prime $\ell$ that is coprime to $n$ and the image of the homomorphism $J \to \Aut (\ZZ/n\ZZ) \cong \Gal(\QQ(e^{\frac{2\pi i}{n}})/\QQ)$ belongs
to the decomposition subgroup of $p$.
This means that a subgroup of $G$ is $\QQ_p$-elementary if it is either a subgroup of $P$ or of the form $\langle C,\tau'\rangle$ where $C$ is a cyclic subgroup of
$P$ and $\tau'$ an element of order $2$. We  consider the exact commutative diagram
\[ \xymatrix{
K_1\bigl(\ZZ_p[G]\bigr) \ar[r] \ar[d]^{\alpha_1} & K_1\bigl(\QQ_p[G]\bigr) \ar[r]\ar[d]^{\alpha_2} &  K_0\bigl(\ZZ_p[G],\QQ_p[G]\bigr) \ar[r] \ar[d]^{\alpha_3} & 0\\
\varprojlim_H K_1\bigl(\ZZ_p[H]\bigr) \ar[r] & \varprojlim_H K_1\bigl(\QQ_p[H]\bigr) \ar[r] & \varprojlim_H K_0\bigl(\ZZ_p[H],\QQ_p[H]\bigr)
}\]
where the limits are over all $\QQ_p$-elementary subgroups $H$ of $G$. The transition maps are the homomorphisms induced by inclusions $H \subseteq H'$ and by maps of
the form $H \to gHg^{-1}$ for $g \in G$ and all vertical arrows are the natural restriction maps.
By a theorem of Dress~\cite{dress} (see also~\cite[Theorem 11.2]{oliver}), the maps $\alpha_1$ and $\alpha_2$ are bijective and hence $\alpha_3$ is injective. Now
$K_0\bigl(\ZZ_p[H],\QQ_p[H]\bigr)_{\rm tor}$ is trivial whenever $H$ has order prime to $p$ and in~\cite[Proposition 3.2.(2)]{Breuning} Breuning has shown that the
restriction map $K_0\bigl(\ZZ_p[H],\QQ_p[H]\bigr)_{\rm tor} \to K_0\bigl(\ZZ_p[C],\QQ_p[C]\bigr)_{\rm tor}$ is injective for any subgroup $H$ of the form
$\langle C,\tau'\rangle$.
The map $\alpha_3$ therefore restricts to give an injective homomorphism
\[ K_0\bigl(\ZZ_p[G],\QQ_p[G]\bigr)_{\rm tor} \to \varprojlim_J K_0\bigl(\ZZ_p[J],\QQ_p[J]\bigr)_{\rm tor} \cong K_0\bigl(\ZZ_p[P],\QQ_p[P]\bigr)_{\rm tor},\]
where $J$ runs over all subgroups of $P$, as required to prove claim (ii).

Claim (iii) follows from the equalities
\[
 \mathcal{E} = \sum_{\chi \in \Ir(P)}\mathcal{E}_\chi\sum_{\pi \in P}|P|^{-1}\chi(\pi)^{-1} \pi
 = \sum_{\pi \in P}\biggl( |P|^{-1}\sum_{\chi \in \Ir(P)} \chi(\pi)^{-1}\mathcal{E}_\chi \biggr)\pi
\]
and the fact that $\ZZ_p[P]^\times = \ZZ_p[P]\cap \mathfrak{M}_p'^\times$.
\end{proof}

\section{Special cases}\label{special cases}

In this section we use the criteria of Theorem~\ref{exp cong} to give both theoretical and numerical verifications of the $p$-part of the equivariant Tamagawa number
conjecture for pairs  $\bigl(h^1(A_{/F})(1),\ZZ[G]\bigr)$ where $A$ is an elliptic curve for which $A(F)$ has strictly positive rank and $G$ is both non-abelian and of
order divisible by $p$.

We believe that, apart from the recent results of Bley in~\cite{Bley3}, where the group $A(F)_p$ is assumed to be trivial and the field $F$ to be an abelian extension
of $\QQ$ of exponent $p$, these results constitute the first verifications of the $p$-part of the equivariant Tamagawa number conjecture for any elliptic curve and any
Galois extension of degree divisible by $p$.

We begin by giving the proof of Theorem \ref{HPintro}, which relies on the theory of Heegner points and makes crucial use of the theorem of Gross and Zagier.
We note that the additional hypotheses in Theorem~\ref{HPintro} imply the validity of hypothesis~\ref{hyp_i}. In addition,  Kolyvagin~\cite[Proposition 2.1]{gross_koly}
shows that in this case one has $\sha_p(A_{K})=0$.
In particular one knows that the $p$-primary part of the Birch and Swinnerton-Dyer conjecture holds for $A_{/K}$.
Furthermore, the hypotheses~\ref{hyp_d} and \ref{hyp_h} are obviously satisfied in the setting of Theorem \ref{HPintro}.

\subsection{The proof of Theorem \ref{HPintro}}\label{proofmain}

In view of Theorems~\ref{equivalence theorem} and~\ref{exp cong} we are reduced to verifying the conditions (i) and (ii) that occur in the latter  result.

To do this we fix a modular parametrisation $\varphi\colon X_0(N)\to A$ of smallest degree. We also denote by $c$ the Manin constant of $\varphi$ and write $Q$ for the
trace in $A(F)$ of the Heegner point that is defined over the Hilbert class field of $K$.

Set $C := 4 \,c_{\infty} \cdot c^{-2}\cdot \bigl\vert\mathcal{O}_{K}^\times\bigr\vert^{-2} $ and $c_{\infty} = \bigl[A(\RR):A^0(\RR)\bigr]$ is the number of connected
components of $A(\RR)$.
Under our hypotheses, the theorem of Gross and Zagier (see, in particular, \cite[\S I, (6.5) and the discussion on p. 310]{GZ}) implies that for each $\chi\in \Ir(P)$
one has
\begin{align}\label{GZ form}
0 \neq \frac{ L'(A_{/K},\chi,1) \sqrt{\vert d_K\vert}}{\Omega^{+}_\infty(A) \Omega^{-}_\infty(A)} &= C\cdot \frac{1}{|P|} \bigl\langle T_\chi(Q),T_{\check\chi}(Q)\bigr\rangle_F\\
 &= \begin{cases} C\cdot  h_{F,\Ind^G_P(\chi)}(Q),\quad &\text{if $\chi \not= \eins_P$}\\
                  C \cdot h_{F,\rho_A'}(Q), &\text{if $\chi = \eins_P$,}
    \end{cases} \notag
\end{align}
where $\rho'_A$ is the linear character of $G$ appearing in $A(K)$ and $\infty$ denotes the archimedean place of $\QQ$.
The second equality here follows from the equality $T_{\Ind_G^P\chi} = T_\chi + T_{\check\chi}$ and the fact that $\langle T_{\chi}(Q), T_{\chi}(Q)\rangle_F = 0$
(since the height pairing is $G$-invariant).

For each $\psi$ in $\Ir(G)$ we set
\begin{equation}\label{Qhat_eq}
  \hat{\mathcal{Q}}_{\psi} := \frac{ \Lstar(A,\psi,1)\cdot \sqrt{ d_{\psi} } }{ \Omega(A,\psi) \cdot H_{\psi} }
\end{equation}
where the quantities $d_\psi$ and $H_{\psi}$ are as defined just before Theorem~\ref{exp cong}. We also write
\begin{equation}\label{t_def_eq}
  t_\psi = \frac{ \Lstar_{S_{\rm r}} (A,\psi, 1) }{\Lstar(A,\psi,1)}
\end{equation}
for the correction term that accounts for the $S_{\rm r}$-truncation in the leading terms for each $\psi\in\Ir(G)$.
Then the non-zero complex number $\mathcal{Q}_{\psi} := u_\psi\cdot t_\psi \cdot \hat{\mathcal{Q}}_\psi$ is as in Theorem~\ref{exp cong} using our Heegner point $Q$.
By using~\eqref{GZ form}, and the fact that $d_\psi = d_{\eins} d_\epsilon = \vert d_K \vert$ for all $\psi$ of dimension two (as $F/K$ is unramified), one then finds
that
\begin{equation}\label{def char2}
\begin{alignedat}{2}
  \mathcal{Q}_{\eins} \cdot \mathcal{Q}_{\epsilon} &=  u_{\eins} \,t_{\eins}\, u_\epsilon \, t_\epsilon\cdot  C, \qquad &&\text{and }\\
  \mathcal{Q}_{\psi} & = u_\psi\, t_\psi \cdot C \qquad &&\text{if $\psi =\Ind^G_P\chi$ for $\chi\in \Ir(P)\setminus\{\eins_P\}$.}
\end{alignedat}
\end{equation}
Our hypotheses imply that $A(K)$ has rank one and $\sha_p(A_{K})$ vanishes by Kolyvagin~\cite{gross_koly} and hence all of the hypotheses of Theorem~\ref{exp cong} are
satisfied except for the requirement that $p$ splits in $K/\QQ$.
However, the sole purpose of the latter hypothesis is to ensure (via the proof of Proposition~\ref{Q_prop}) that the $\QQ[P]$-module $\QQ\otimes_\ZZ A(F)$ has a free
rank one direct summand and so, since~\eqref{GZ form} implies that the point $Q$ generates such a summand, this hypothesis can be ignored in our case.
Following Remark~\ref{gen exp cong}, it therefore suffices for us to prove that the terms $\mathcal{Q}_\psi$ satisfy both the conditions of Theorem~\ref{exp cong}(i) and
the congruences~\eqref{first congs}.

Since the $p$-part of the Birch and Swinnerton-Dyer conjecture holds for $A_{/K}$, the hypotheses~\ref{hyp_a} and~\ref{hyp_b} and the known vanishing of $\sha_p(A_{K})$
combine to imply that $\hat{\mathcal{Q}}_{\eins}$ and $\hat{\mathcal{Q}}_{\epsilon}$ are both $p$-units.
The hypothesis~\ref{hyp_c} implies that $A$ has good reduction at $p$ and so the hypothesis (ii) combines with ~\cite[Theorem 2.7]{manin_constant} to imply that
the Manin constant $c$ is a $p$-unit. Using also the fact that $p$ is
unramified, we find that
$C$ is a $p$-unit too.

We proceed to compute the values of $u_{\psi}$ and $t_\psi$. Since $F/K$ is unramified, we have that each ramified place $v$ in $S_{\rm r}$ has ramification index $2$
and $v$ does not split in $K/\QQ$. It is easy to see that $u_\epsilon = 1$ and that $u_{\eins} = u_\psi = (-1)^{\vert S_{\rm r}\vert}$ for all $\psi$ of dimension two. Next, for
any $\psi$ in $\Ir(G)$, we have
\begin{equation*}
  t_\psi = \prod_{v\in S_{\rm r}} \Nrd_{\CC_p[G]}\Bigl( 1 - \Fr_w^{-1} \vert \kappa_v\vert ^{-1} \Bigl\vert \bigl( T_p (A) \otimes V_\psi\bigr)^{I_w} \Bigr)
\end{equation*}
and since no place of bad reduction is allowed to ramify by hypothesis~\ref{hyp_e}, one has $\bigl( T_p (A) \otimes V_\psi\bigr)^{I_w} = T_p (A) \otimes V_{\psi}^{I_w}$.
Hence $t_\epsilon = 1$ and $t_{\eins} = t_\psi$ for all $\psi$ of dimension two. Moreover, this last value $t_{\eins}$ is equal to
$\prod_{v \in S_{\rm r}} \vert A(\kappa_v)\vert/ \vert \kappa_v\vert$, which is a $p$-unit by hypothesis~\ref{hyp_f}. Therefore $\mathcal{Q}_\psi$ is a $p$-unit for
all $\psi\in\Ir(G)$.

Next we note that $\mathcal{Q}_{\eins} \cdot \mathcal{Q}_\epsilon = \mathcal{Q}_\psi$ for any $\psi$ of dimension two as we have shown that
$u_{\eins} \, t_{\eins}\, u_\epsilon\, t_{\epsilon} = u_\psi\,t_\psi$. Since each element $\mathcal{Q}_\psi$ also belongs to $\QQ$, this formula makes it clear that
they satisfy the condition of Theorem~\ref{exp cong}(i). In addition, it shows that for any $\pi \in P$ one has
$$
-\sum_{\chi\in\Ir(P)\setminus\{\eins_P\}} \check\chi(\pi) \mathcal{Q}_{\Ind^G_P\chi} = - \mathcal{Q}_{\eins} \, \mathcal{Q}_{\epsilon} \cdot \Bigl( -1 + \sum_{\chi\in\Ir(P)} \check\chi(\pi) \Bigr).
$$
Finally we note that the last sum in the above expression is equal to $0$ if $\pi\neq 1$ and to $\vert P\vert$ otherwise. Hence the expression is in both cases
congruent to $\mathcal{Q}_{\eins}\,\mathcal{Q}_\epsilon$ modulo $p\ZZ_{(p)}$, as required to complete the proof of Theorem \ref{HPintro}.


\subsection{\texorpdfstring{$S_3$}{S3}-extensions}\label{Sthree}
 In this section we investigate the case that $F/K$ is a cyclic extension of degree $p=3$ and hence $F/k$ is a non-abelian extension of degree six. We show that the conjecture of Birch and Swinnerton-Dyer (or `BSD' for short) implies an explicit congruence modulo rational squares and then combine this congruence with Theorem~\ref{exp cong} to prove the equality~\eqref{etnc equiv} for a natural family of examples.

We assume throughout that $G$ is a non-abelian group of order six. In this case $\Ir(G)$ comprises $\eins := \eins_G$, $\epsilon$ and $\psi = \Ind_P^G\chi$ for a fixed $\chi$ in $\Ir(P)\setminus\{\eins_P\}$. We fix a subfield $L$ of $F$ that is of degree $3$ over $k$ and set $H := G_{F/L}$.
\subsubsection{An arithmetic congruence}
For each character $\eta$ in $\Ir(G)$ we define
\begin{equation}\label{Qtilde_eq}
  \tilde{\mathcal{Q}}_{\eta} := \frac{ \Lstar (A,\eta,1)\cdot \sqrt{d_{\eta}}}{{\Omega(A,\eta)}\cdot \tilde{H}_{\eta}}.
\end{equation}
Here the quantities $d_{\eta}$ are as defined just before Theorem~\ref{exp cong} and we have set
\begin{equation}\label{Htilde_eq}
  \tilde{H}_{\eins} := \Reg(A_{/k}),\qquad \tilde{H}_{\epsilon} := \frac{\Reg(A_{/K})}{\Reg(A_{/k})}, \qquad\text{and}\qquad
  \tilde{H}_\psi := \frac{\Reg(A_{/L})}{\Reg(A_{/k})}
\end{equation}
with $\Reg(A_{/E})$ denoting the regulator of $\langle\cdot,\cdot \rangle_{E}$ on the Mordell-Weil group $A(E)$ for any field $E$.
These slight variations of the regulators that we used earlier fit well with BSD and do not require any knowledge of the explicit Galois structure of  $A(F)_p$. In fact,  BSD predicts that each expression $\tilde{\mathcal{Q}}_{\eta}$ is a non-zero rational number.

In the following result we include the assumed analytic continuation of the $L$-series and finiteness of the Tate-Shafarevich group in the assumption that BSD holds but do not assume any of the hypotheses~\ref{hyp_a}--\ref{hyp_d} and~\ref{hyp_f}--\ref{hyp_h}.
This result may therefore suggest one sort of congruence relation that might be expected to hold when our hypotheses fail. We note also that the remark made by Dokchitser and Dokchitser in the fourth paragraph after Conjecture~1.4 of~\cite{dok_nonab} hints at the possibility of this sort of result in a more restrictive setting.

\begin{theorem}\label{arith_thm}
  Let $A$ be an elliptic curve over $k$ and assume no place at which $A$ has bad reduction ramifies in $F/k$. Then if the Birch and Swinnerton-Dyer conjecture holds for $A_{/k}$, $A_{/K}$ and $A_{/L}$  one has a congruence modulo non-zero rational squares
  \begin{equation}\label{tilde_congruence_eq}
   \tilde{\mathcal{Q}}_{\eins} \cdot \tilde{\mathcal{Q}}_{\epsilon} \equiv \tilde{\mathcal{Q}}_{\psi}\pmod{\square}.
  \end{equation}
\end{theorem}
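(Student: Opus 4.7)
The plan is to substitute the Birch--Swinnerton-Dyer formula for $A_{/k}$, $A_{/K}$ and $A_{/L}$ directly into the definitions (\ref{Qtilde_eq}) of $\tilde{\mathcal{Q}}_\eins$, $\tilde{\mathcal{Q}}_\eins\tilde{\mathcal{Q}}_\epsilon$ and $\tilde{\mathcal{Q}}_\eins\tilde{\mathcal{Q}}_\psi$, using the Artin factorisations $L(A_{/K},s) = L(A,\eins,s)\, L(A,\epsilon,s)$ and $L(A_{/L},s) = L(A,\eins,s)\, L(A,\psi,s)$. The choice (\ref{Htilde_eq}) of the regulator normalisations $\tilde H_\eta$, combined with the conductor-discriminant identities $d_\eins\cdot d_\epsilon = |d_K|$ and $d_\eins\cdot d_\psi = |d_L|$, is precisely what is needed so that $\Reg(A_{/E})$ and the discriminant factors from BSD cancel cleanly. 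Since $A$ is principally polarised, each $|\sha(A_{/E})|$ is a rational square by the Cassels-Tate pairing, and each $|A(E)_{\rm tor}|^2$ is manifestly a square; hence upon also using the identity $\tilde{\mathcal{Q}}_\psi = \tilde{\mathcal{Q}}_\eins\tilde{\mathcal{Q}}_\psi/\tilde{\mathcal{Q}}_\eins$ and working modulo $(\QQ^\times)^2$, the congruence (\ref{tilde_congruence_eq}) reduces to the claim that
\[
\frac{\Omega^{\rm BSD}_{A/K}\cdot \Omega^{\rm BSD}_{A/k}\cdot \Omega(A,\psi)}{\Omega^{\rm BSD}_{A/L}\cdot \Omega(A,\eins)\cdot \Omega(A,\epsilon)}\cdot \frac{\prod_v c_v(A_{/K})\cdot \prod_v c_v(A_{/k})}{\prod_v c_v(A_{/L})}
\]
is a rational square, where $\Omega^{\rm BSD}_{A/E}$ denotes the archimedean period appearing in the classical BSD formula for $A_{/E}$.

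For the archimedean period quotient I argue place-by-place at each infinite place $v$ of $k$. The decomposition group $D_v\subseteq G$ is either trivial or of order two if $v$ is real, and is always trivial if $v$ is complex; in each such case the numbers of real and complex places of $K$ and $L$ above $v$ are determined, and a direct comparison of the corresponding classical BSD periods $\Omega^{\rm BSD}_{A/E,v}$ (expressed as products of $\Omega_v^\pm(A)$ and $\Omega_v(A)$ via the explicit formulas of \S\ref{archi_subsubsection}) with the factors of $\Omega(A,\eins)$, $\Omega(A,\epsilon)$ and $\Omega(A,\psi)$ contributed by $v$ shows that the exponents of each of $\Omega_v^+(A)$, $\Omega_v^-(A)$ and $\Omega_v(A)$ match in numerator and denominator, leaving only an even power of $2$ and of the number of real components $c_\infty(A_v)$.

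The main obstacle, and the step that essentially uses the hypothesis that no place of bad reduction ramifies in $F/k$, is the Tamagawa quotient. Attention reduces to bad-reduction places of $A_{/k}$ (since all Tamagawa numbers at good-reduction places are $1$), and at each such $v$ we fix a place $w$ of $F$ above it; the assumed unramifiedness forces $D_w\subseteq G$ to be cyclic of order $1$, $2$ or $3$. In the first two cases a direct combinatorial matching of the places of $K$ and of $L$ above $v$ together with their residue degrees yields a bijective pairing of Tamagawa contributions between numerator and denominator, making the local ratio equal to $1$; for example, when $|D_w|=2$ there is one place $w_K$ of $K$ above $v$ with residue degree $2$ and two places of $L$ above $v$ with residue degrees $1$ and $2$, so the local ratio is $c(A_{/K_{w_K}})\cdot c(A_{/k_v})/(c(A_{/k_v})\cdot c(A_{/K_{w_K}}))=1$. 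In the remaining case $D_w=P$, the place $v$ splits completely in $K/k$ and has a unique place $w_L$ in $L/k$ of residue degree $3$, so the local ratio is $c(A_{/k_v})^3 / c(A_{/L_{w_L}})$, which modulo squares equals $c(A_{/k_v})\cdot c(A_{/L_{w_L}}) = |\Phi^{\Fr}|\cdot|\Phi^{\Fr^3}|$, where $\Phi$ is the Galois-module component group of the N\'eron model of $A$ at $v$. A finite case analysis by Kodaira type, using that $\Phi$ is either cyclic or isomorphic to $(\ZZ/2\ZZ)^2$, then shows that $|\Phi^{\Fr}|\cdot|\Phi^{\Fr^3}|$ is always a rational square, which completes the proof.
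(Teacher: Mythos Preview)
Your overall strategy matches the paper's: feed BSD for $A_{/k}$, $A_{/K}$, $A_{/L}$ into the Artin factorisations and conductor--discriminant identities, discard $|\sha|$ and $|A(E)_{\rm tor}|^2$ as squares, and reduce to a place-by-place check on periods and Tamagawa numbers. The archimedean bookkeeping and the Tamagawa analysis for $|D_w|\in\{1,2\}$ are fine.

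There is, however, a genuine omission. Your displayed ratio leaves out the N\'eron-differential correction factors $\prod_{v\ \text{finite}}\bigl|\omega_A/\omega_v^{\text{N\'e}}\bigr|_v$ that appear on the arithmetic side of BSD (compare the paper's formula~(\ref{BSD statement})). The periods $\Omega(A,\eta)$ of \S\ref{archi_subsubsection} are computed with a \emph{fixed} $k$-rational differential $\omega_A$, not with local N\'eron differentials over each of $k$, $K$, $L$, so these factors are really present in $[\text{BSD}_k]\cdot[\text{BSD}_K]/[\text{BSD}_L]$ and must be shown to contribute a square. The paper dispatches this as follows: under the hypothesis that no bad-reduction place ramifies in $F/k$, the N\'eron differential over $k_v$ remains a N\'eron differential over every $K_w$ and $L_{w'}$ lying above $v$ (for good-reduction $v$ this holds unconditionally, for bad-reduction $v$ it uses unramifiedness). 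Writing $\alpha=\omega_A/\omega_v^{\text{N\'e}}\in k_v^\times$, one then has $\prod_{w\mid v}|\alpha|_w=|\alpha|_v^{[E:k]}$ for $E\in\{k,K,L\}$, and the degree identity $1+2=3$ gives an exact equality of the two sides, not merely a congruence modulo squares. You should insert this step.

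Your treatment of the case $|D_w|=3$ differs slightly in presentation from the paper's: you show $c(A_{/k_v})^3/c(A_{/L_{w_L}})\equiv|\Phi^{\Fr}|\cdot|\Phi^{\Fr^3}|\pmod\square$ and run a Kodaira case analysis on the component group $\Phi$, while the paper observes directly that the only way the Tamagawa number can change in an unramified cubic extension is the $\mathrm{I}_0^*$ jump from $1$ to $4$. Both are valid and amount to the same finite check.
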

\begin{proof}
 Fix a finite field extension $E$ of $k$. If $v$ is a (finite or infinite) place of $E$, write $c(A/E_v)$ for the number of connected components $\bigl[ A(E_v) :A(E_v)^0 \bigr]$ of $A(E_v)$. For a finite place $v$ in $E$, also write $\omega_v^{\text{N\'e}}$ for a N\'eron differential of $A_{/E_v}$. Then BSD for the field $E$ asserts that
  \begin{equation}\label{BSD statement}
    \frac{ \Lstar (A_{/E},1) \, \sqrt{\vert d_{E}\vert }}{\Omega(A_{/E}) \cdot \Reg(A_{/E})} =  \prod_{\text{all }v} c(A/{E_v}) \cdot \prod_{\text{finite }v} \biggl\vert \frac{\omega_A}{\omega_v^{\text{N\'e}}}\biggr\vert_v \cdot \frac{\vert \sha(A_E)\vert}{\vert A(E)_{\rm tor}\vert^2}
  \end{equation}
 where we write $\Omega(A_{/E})$ for the period $\prod_{v\in S_\RR^E} \Omega_v^+(A) \cdot \prod_{v\in S_\CC^E} \Omega_v(A)$ of $A$ over $E$, as defined in~\S\ref{archi_subsubsection} with respect to a fixed invariant differential $\omega_A$ of $A_{/k}$.

The term $\tilde{\mathcal{Q}}_{\eins}$ is equal to the left hand side of~\eqref{BSD statement} with $E=k$. Furthermore the products $\tilde{\mathcal{Q}}_{\eins}\tilde{\mathcal{Q}}_{\epsilon}$ and $\tilde{\mathcal{Q}}_{\eins}\tilde{\mathcal{Q}}_{\psi}$ link to the left hand sides of~\eqref{BSD statement} for the fields $K$ and $L$ respectively. More precisely, one has
$$
 \tilde{\mathcal{Q}}_{\eins} \cdot \tilde{\mathcal{Q}}_\epsilon = \frac{ \Lstar (A_{/K},1) \, \sqrt{\vert d_{K}\vert }}{\Omega(A_{/K}) \cdot \Reg(A_{/K})} \cdot \prod_{v \in S_{\RR}^{\rm r}} c(A/k_v)
$$
where, as before, $S_{\RR}^{\rm r}$ denotes the set of real places of $k$ that become complex places in $K$. The formula for $\tilde{\mathcal{Q}}_{\eins}\tilde{\mathcal{Q}}_{\psi}$ is modified by the same factor as there is exactly one complex place in $L$ above each place in $S_{\RR}^{\rm r}$.
In proving this last formula one also uses the fact that $d_\psi = \vert d_K\vert\cdot Nf(\chi)$ is equal to $\vert d_L/d_k\vert $ since $\psi = \Ind_H^G\eins_H - \eins$.

In addition, since we are working modulo squares, we may neglect the terms $\vert \sha(A_E)\vert$ and $\vert A(E)_{\rm tor}\vert^2$ which occur on the right hand side of the formula~\eqref{BSD statement}. The required congruence~\eqref{tilde_congruence_eq} will therefore be proved if we can show for each place $v$ in $k$ that
\begin{equation}\label{tam_eq}
    c(A/k_v) \cdot \prod_{\substack{w\mid v\\ \text{in }K}} c(A/K_w) \equiv \prod_{\substack{w\mid v\\ \text{in }L}} c(A/L_w) \pmod{\square}
  \end{equation}
  and for each finite place $v$ in $k$ that
  \begin{equation}\label{ne_eq}
    \biggl\vert \frac{\omega_A}{\omega_v^{\text{N\'e}}}\biggr\vert_v \cdot
 \prod_{\substack{w\mid v\\ \text{in }K}} \biggl\vert \frac{\omega_A}{\omega_w^{\text{N\'e}}}\biggr\vert_w
  =  \prod_{\substack{w\mid v\\ \text{in }L}} \biggl\vert \frac{\omega_A}{\omega_w^{\text{N\'e}}}\biggr\vert_w .
  \end{equation}
 Now, by our assumption, no place at which $A$ has bad reduction is ramified in $F/k$ and so the N\'eron differential for $A$ over $k_v$ remains a N\'eron differential for $A$ over both of the fields $K_w$ and $L_w$. Hence the equation~\eqref{ne_eq} is valid for all finite places, because $\omega_A/\omega^{\text{N\'e}}_v$ is in $k_v$ and one has $[k:k]+[K:k] = [L:k]$.

 Next we note that the congruence~\eqref{tam_eq} only needs to be checked at places at which $A$ has bad reduction (and which therefore do not ramify in $F/k$) and at infinite places. If the decomposition group $G_v$ at a place above $v$ in $F$ is trivial, then both sides of this congruence are equal to $c(A/k_v)^3$.
 If $G_v$ is cyclic of order $2$, then there is one place $w'$ in $L$ with  $L_{w'} = k_v$ and one place $w''$ with $L_{w''}=K_w$ for the unique place $w$ above $v$ in $K$ and so both sides of~\eqref{tam_eq} are equal to $c(A/k_v)\cdot c(A/K_w)$. Finally we have to treat the case when $G_v$ is cyclic of order $3$ and hence the place $v$ is finite. In this case the left hand side of~\eqref{tam_eq} is equal to $c(A/k_v)^3$ while the right hand side is equal to $c(A/L_w)$ where $L_w$ is an unramified cubic extension of $k_v$.
 If $c(A/L_w)=c(A/k_v)$ then we have indeed a congruence modulo squares. However it is possible that the Tamagawa numbers change in an unramified extension. Luckily, the only possibility for this to happen in a cubic extension is when the Kodaira type is I${}_{0}^*$ and then the change is from $c(A/k_v)=1$ to $c(A/L_w)=4$, see for instance Step~6 in~\cite{sil2} on page~367, and the congruence~\eqref{tam_eq} holds in all cases.
\end{proof}

\subsubsection{The connection to eTNC$_p$}\label{etnc S_6}
We now use Theorem~\ref{arith_thm} to show that, under the hypotheses of Theorem~\ref{exp cong}, the relevant cases of BSD imply the equality~\eqref{etnc equiv}.

 \begin{corollary}\label{arith_cor}
   We assume that the elliptic curve $A$ and field $F$ satisfy the hypotheses~\ref{hyp_a}--\ref{hyp_h}. We assume also that $G$ and $L$ are as in Theorem~\ref{arith_thm} (so $p = 3$) and that the Birch and Swinnerton-Dyer conjecture holds for $A$ over each of the fields $k$, $K$ and $L$.

 Then the equality~\eqref{etnc equiv} is valid provided that $\sha_p(A_{K})=0$, $\rk (A_K) =1$ and there exists a point $Q$ in $A(F)$ which generates a $G$-module of finite prime-to-$p$ index in $A(F)$ that is isomorphic to $\ZZ[G]\bigl(1-(-1)^{\rk(A_{k})}\tau\bigr)$.
 \end{corollary}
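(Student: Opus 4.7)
The plan is to invoke Theorem~\ref{exp cong} so that it suffices to verify the rationality/$p$-unit conditions (i) and, via Remark~\ref{n1_rem} applied with $n=1$, the single congruence
\[
  \mathcal{Q}_{\eins}\cdot \mathcal{Q}_{\epsilon}\;\equiv\; -2\,\mathcal{Q}_{\psi}\pmod{3\ZZ_{(3)}}
\]
for the three numbers $\mathcal{Q}_{\eins},\mathcal{Q}_{\epsilon},\mathcal{Q}_{\psi}$ attached to our Heegner-like point $Q$. The entire strategy is to connect each $\mathcal{Q}_{\eta}$ to the BSD-expression $\tilde{\mathcal{Q}}_{\eta}$ of~\eqref{Qtilde_eq}, so that the congruence of Theorem~\ref{arith_thm} (provided by the assumed BSD over $k$, $K$, $L$) can be upgraded to the desired congruence of Theorem~\ref{exp cong}.

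First I would explicitly compute the heights $h_{F,\psi}(Q)$. Using that $Q$ generates a $\ZZ[G]$-submodule of prime-to-$p$ index in $A(F)$ isomorphic to $\ZZ[G]\bigl(1-(-1)^{\rk(A_k)}\tau\bigr)$, and writing $h_g := \langle Q,g(Q)\rangle_F$, a straightforward computation with $T_{\epsilon}$ and $T_{\psi}=2-\sigma-\sigma^2$ (in the case $\rk(A_k)=0$; the complementary case is symmetric) yields $h_{F,\epsilon}(Q) = h_1+h_\sigma+h_{\sigma^2}$ and $h_{F,\psi}(Q) = h_1-h_\sigma$. Matching this against the regulators computed from the natural generators $N_P(Q)\in A(K)$ and $\sigma(Q)-\sigma^2(Q)\in A(L)$ gives equalities
\[
 \tilde{H}_{\epsilon}\;=\;n_K^{-2}\cdot H_{\epsilon}, \qquad
 \tilde{H}_{\psi}\;=\;n_L^{-2}\cdot H_{\psi}
\]
with $n_K,n_L$ prime-to-$p$ integers arising from the indices in $\ZZ'\otimes A(F)$. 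Inserting these into the definitions of $\mathcal{Q}_{\eta}$ and $\tilde{\mathcal{Q}}_{\eta}$ produces an identity $\mathcal{Q}_{\eta} = u_{\eta}\,t_{\eta}\,m_{\eta}\,\tilde{\mathcal{Q}}_{\eta}$ with $m_{\eins}=1$, $m_{\epsilon}=n_K^{-2}$, $m_{\psi}=n_L^{-2}$, and with local correction factors $u_{\eta},t_{\eta}$ exactly as in~\eqref{def char2} of the proof of Theorem~\ref{HPintro}.

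By BSD over each of $k$, $K$, $L$ each $\tilde{\mathcal{Q}}_{\eta}$ equals the explicit rational expression of~\eqref{BSD statement}. The hypotheses~\ref{hyp_a},~\ref{hyp_b},~\ref{hyp_f},~\ref{hyp_g} together with~\ref{hyp_h} then force the right-hand side of~\eqref{BSD statement} to be a $3$-unit in each case: torsion orders are coprime to $p$ by~\ref{hyp_a}, local Tamagawa contributions by~\ref{hyp_b} and~\ref{hyp_f}, the ratio of differentials is a $p$-unit since $p$ is unramified in $F/\QQ$, and the Tate--Shafarevich orders are $3$-units by a standard descent argument exploiting the vanishing of $A[3]^{G_K}$. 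Combined with the explicit computations of $u_{\eta}$ and $t_{\eta}$ given in~\S\ref{proofmain} (which show $u_{\epsilon}=t_{\epsilon}=1$ and that the remaining factors are $3$-units) this shows that each $\mathcal{Q}_{\eta}$ is a rational $3$-unit, settling condition~(i) of Theorem~\ref{exp cong}.

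For the congruence, Theorem~\ref{arith_thm} (which is the step whose proof we have assumed BSD to establish) gives $\tilde{\mathcal{Q}}_{\eins}\tilde{\mathcal{Q}}_{\epsilon}\equiv\tilde{\mathcal{Q}}_{\psi}\pmod{\square}$. Since all factors appearing are $3$-units and every $3$-unit square reduces to $1\pmod 3$, a congruence modulo squares of $3$-units implies a congruence modulo $3\ZZ_{(3)}$. Substituting $\mathcal{Q}_{\eta}=u_{\eta}t_{\eta}m_{\eta}\tilde{\mathcal{Q}}_{\eta}$ and using that $u_{\eins}t_{\eins}u_{\epsilon}t_{\epsilon}=u_{\psi}t_{\psi}$ (verified exactly as at the end of~\S\ref{proofmain}) together with $m_{\epsilon}m_{\psi}^{-1}=(n_K/n_L)^{-2}\equiv 1\pmod 3$, we arrive at $\mathcal{Q}_{\eins}\mathcal{Q}_{\epsilon}\equiv\mathcal{Q}_{\psi}\pmod{3\ZZ_{(3)}}$. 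Finally, $-2\equiv 1\pmod 3$, so this is exactly the congruence required by Remark~\ref{n1_rem}. The main obstacle to make rigorous in a full write-up is the last paragraph: the bookkeeping of the local $u_{\eta}$, $t_{\eta}$ and the index factors $n_K,n_L$ must be done carefully enough to ensure that the square-class relation provided by Theorem~\ref{arith_thm} translates into the correctly signed mod-$3$ relation; the height-to-regulator computation and the $3$-unit verification are then essentially mechanical.
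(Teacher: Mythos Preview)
Your overall strategy matches the paper's: reduce to Theorem~\ref{exp cong} and Remark~\ref{n1_rem}, compare $\mathcal{Q}_\eta$ with $\tilde{\mathcal{Q}}_\eta$, and feed in Theorem~\ref{arith_thm}. The height/regulator bookkeeping and the use of the mod-squares congruence are essentially what the paper does.

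However, there is a genuine gap in your handling of the local factors $u_\eta$ and $t_\eta$. You assert that $u_{\eins}t_{\eins}u_\epsilon t_\epsilon = u_\psi t_\psi$ and that $u_\epsilon=t_\epsilon=1$, ``verified exactly as at the end of \S\ref{proofmain}''. But the computations in \S\ref{proofmain} rely crucially on the fact that $F/K$ is the Hilbert class field, so \emph{every} place in $S_{\rm r}$ has ramification index $2$ and is non-split in $K/\QQ$. In the general $S_3$-setting of Corollary~\ref{arith_cor} this is false: a place $v$ of $k$ can have $e_v=3$, and then (looking at the table the paper actually computes) one finds for instance $u_\epsilon=-1$ and $t_\epsilon=N_v/q_v$ when $f_v=1$, so your claimed equalities $u_\epsilon=t_\epsilon=1$ fail outright. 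In fact the paper does \emph{not} obtain an equality $u_{\eins}t_{\eins}u_\epsilon t_\epsilon = u_\psi t_\psi$; it only shows the ratio is congruent to $1$ modulo squares and then modulo $3$, and the most delicate case ($e_v=3$, $f_v=2$) requires a separate arithmetic argument: one must show $q_v\equiv -1\pmod 3$ (using that the dihedral action forces Galois to act non-trivially on $\mu_3(\kappa_w)$) and then deduce $-N_w\equiv 1\pmod 3$ from the quadratic-extension point-count identity $N_w=N_v(2q_v+2-N_v)$.

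So the ``bookkeeping'' you flag in your final paragraph is not mechanical: it is precisely the missing content. To repair the proof you need to drop the appeal to \S\ref{proofmain}, tabulate $u_\eta$, $t_\eta$ according to the possible pairs $(e_v,f_v)\in\{(2,1),(3,1),(3,2)\}$ (ruling out $e_v=6$ via \ref{hyp_h} and tame/wild inertia), and supply the $-N_w\equiv 1\pmod 3$ argument for the $(3,2)$ case.
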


\begin{proof}
  First, we note that given our hypotheses~\ref{hyp_a}, \ref{hyp_b} and~\ref{hyp_e}, our choice of $\omega_A$ (as in \S\ref{archi_subsubsection}) and our assumption that
  $\sha_p(A_{K})$ vanishes, the validity of BSD implies that the term $\tilde{\mathcal{Q}}_\eta$ is a $p$-adic unit for all $\eta\in\Ir(G)$.

  Next we link $\tilde{\mathcal{Q}}_\eta$ to $\mathcal{Q}_\eta$ in Theorem~\ref{exp cong}, the difference being the terms $\tilde{H}_\eta$ versus $H_{\eta}$ and the terms
  $u_\eta$ and $t_\eta$ (as defined in~\eqref{t_def_eq}). Using the explicit structure of $A(F)_p$, it is easy to show that
  \begin{equation*}
    H_{\psi} \equiv \tilde H_\psi, \qquad
    H_\epsilon \equiv \begin{cases}
                        \tilde{H}_{\epsilon}&\text{ if $\rho_A=\eins$}\\
                       \frac{1}{2}\, \tilde{H}_{\epsilon}&\text{ if $\rho_A=\epsilon$}
                     \end{cases}
\qquad\text{and}\qquad
    H_{\eins} \equiv \begin{cases}
                 \tilde H_{\eins} &\text{ if $\rho_A=\eins$}\\
                  2 \,\tilde H_{\eins} &\text{ if $\rho_A=\epsilon$.}
               \end{cases}
  \end{equation*}
  where all congruences are modulo squares in $\ZZ_{(3)}^{\times}$; namely the quotients are squares of indices, like the index of $\ZZ[G]Q$ in $A(F)$. Up to squares in
  $\ZZ_{(3)}^{\times}$ we therefore have a congruence

  \begin{equation*}
     \frac{1}{u_{\eins}\, t_{\eins}\,u_\epsilon\,t_{\epsilon} } \mathcal{Q}_{\eins} \, \mathcal{Q}_{\epsilon}
   \equiv
     \frac{1}{u_\psi\, t_\psi} \mathcal{Q}_\psi.
  \end{equation*}
  An argument similar to the one that concludes the proof of Theorem~\ref{HPintro} hence implies that we will have verified the criterion in Theorem~\ref{exp cong} if we
  show that $u_{\eins}\, t_{\eins}\,u_\epsilon\,t_{\epsilon}$ and $u_\psi\, t_\psi$ are $3$-adic units that are congruent modulo $3$.
  Writing $N_v = \vert A(\kappa_v)\vert$ for the number of points in the reduction at a place $v$ and $q_v = \vert \kappa_v\vert$, we can summarise the computations of
  the local contribution to these terms at a place $v\in S_{\rm r}$ in the following table according to the type of ramification. Here $e_v$ stands for the ramification
  index at a place in $F$ above $v$ and $f_v$ for the residual degree.
  \begin{center}\begin{tabular}{cc|ccc|ccc}
     & & \multicolumn{3}{c|}{$\det\bigl(-Fr_w\bigl\vert V_\eta^{I_w}\bigr)$} &
      \multicolumn{3}{c}{$\det\bigl(1-Fr_w^{-1}\, q_v^{-1}\bigl\vert (V_\eta\otimes T_p(A))^{I_w}\bigr)$} \\
    $e_v$ & $f_v$ & $u_{\eins}$ & $u_\epsilon$ & $u_\psi$ & $t_{\eins}$ & $t_\epsilon$ & $t_\psi$ \\ \hline &&&&&&&\\[-2ex]
    2     & 1     & $-1$      & 1            & $-1$    & $N_v/q_v$ & 1            & $N_v/q_v$ \\
    3     & 1     & $-1$      & $-1$         & 1       & $N_v/q_v$ & $N_v/q_v$    & 1 \\
    3     & 2     & $-1$      & 1            & 1       & $N_v/q_v$ & $N_w/q_w \cdot q_v/N_v$ & 1
  \end{tabular}\end{center}
  In the last line $w$ denotes the unique place $w$ in $K$ above $v$. If a place $v$ were totally ramified it must be above $2$ or $3$ as the tame inertia group is
  cyclic and it can not be above $2$ because the wild inertia group is normal in the inertia group. Hence there was no need to list the totally ramified case as all
  places above $p=3$ were assumed to be unramified by~\ref{hyp_h}.
  From the table we can conclude that all the terms $u_\eta$ and $t_\eta$ are indeed $3$-units by~\ref{hyp_f} and that
  $$
    \frac{u_{\eins}\,u_\epsilon}{u_\psi}\cdot \frac{t_{\eins}\,t_\epsilon}{t_\psi}\equiv \prod_{v \in S''_{\rm r}} \biggl(-\frac{N_w}{q_w} \biggr)
   \equiv \prod_{v \in S''_{\rm r}} \bigl(-N_w\bigr)\pmod{\square}
  $$
  with $S''_{\rm r} := \{v \,\vert \,e_v = 3\text{ and }f_v=2\}$.

  Hence we are reduced to showing that $-N_w\equiv 1\pmod{3}$ when $e_v=3$ and $f_v=2$. We first note that for such a place we must have $q_v\equiv -1\pmod{3}$. Indeed,
  this is true because the map $\theta_0: \Gal\bigr(F_{w'}/K_w\bigr) \to \mu_3(\kappa_w)$ in Corollaire~IV.1 in~\cite{corps_locaux} is $G$-equivariant and, since
  $F_{w'}/k_v$ is dihedral, the action of $\tau$ on $\mu_3(\kappa_w)$ is non-trivial.

  Finally, we have the equality $N_w = N_v\cdot ( 2q_v + 2 - N_v)$ valid for all quadratic extension of finite fields. Hence
  $N_w \equiv  - N_v^2 \equiv -1 \pmod{3}$.
\end{proof}

\begin{remark}
 A closer analysis of the above argument shows that the hypotheses of Corollary~\ref{arith_cor} may be weakened a little. One can allow places above $3$ to be tamely ramified in $F/k$ and can omit any assumption about the reduction of $A$ at such places. In addition, one need only assume that the group $A(\kappa_v)[p]$ vanishes for places $v$ that are both inert in $K/k$ and ramify in $F/K$.
 \end{remark}

\begin{remark}
 For any prime $p >3$, the methods used in the proofs of Theorem~\ref{arith_thm} and Corollary~\ref{arith_cor} enable one to deduce from the assumed validity of suitable cases of the Birch-Swinnerton-Dyer Conjecture a congruence for the product $\prod_{\dim(\psi) = 2} \hat{\mathcal{Q}}_\psi$, rather than for the sum $\sum_{\dim(\psi) = 2} \hat{\mathcal{Q}}_\psi$ that occurs in~\eqref{etnc equiv}.
\end{remark}

\subsection{Numerical examples}\label{numex}
In this final section we describe two numerical examples to further illustrate the predicted congruences in Theorem~\ref{exp cong} and to explain how one can check these congruences for numerous examples. It is comparatively straightforward to give examples with $p=3$ but Corollary~\ref{arith_cor} implies that there is limited interest in doing so. We therefore discuss examples with $p=7$ and $p=5$.

Our numerical computations were done using Sage~\cite{sage}, which uses underlying Pari-GP~\cite{pari}. The computations of the $L$-values was done in Magma~\cite{magma} which contains an implementation of~\cite{dokchitser}. The code can be obtained from the last named author's webpage.

\subsubsection{A Stark-Heegner point example}
We consider the example of the elliptic curve labelled 37a1 in Cremona's tables~\cite{cremona}
$$
A\colon\qquad y^2 \ +\  y \ = \ x^3\  -\  x
$$
over the Hilbert class field $F$ of $K=\QQ\bigl(\sqrt{577}\bigr)$. The curve has rank $1$ over $\QQ$ and $K$. The extension $F/K$ is of degree $p=7$ defined by a root $\xi$ of the polynomial
$$
x^7 - 2\, x^6 - 7\,x^5 + 10\, x^4 +13\,x ^3 -10\,x^2 - x + 1.
$$
All hypotheses~\ref{hyp_a}--\ref{hyp_h} except the finiteness of $\sha(A_{F})$ in~\ref{hyp_g} can be verified easily.
We find in~\cite{darmon_pollack} that there is a point
$$
Q = \Bigl( 2\,\xi^6 - 4\, \xi^5 -14\, \xi^4 +17\, \xi^3 + 30\, \xi^2 -7\, \xi -3,\ 2\,\xi^6-19\,\xi^4-2\,\xi^3+32\,\xi^2+\xi-5\Bigr)
$$
of infinite order on $A$ defined over $L=\QQ(\xi)$ obtained from Darmon's construction of modular points. The trace of $Q$ in $L/\QQ$ is equal to the generator $R =(0,0)\in A(\QQ)$. Let $\sigma$ be a generator of $P$. It is easy to check that
$$
 \ZZ\, R \oplus \ZZ \, Q \oplus \ZZ\,(\sigma + \sigma^{-1})Q \oplus \ZZ\, (\sigma^2 + \sigma^{-2}) Q
$$
 has finite index coprime to $p$ in $A(L)$. Hence we can take $Q$ as the point whose existence is predicted by Proposition~\ref{Q_prop}. (In the general case, we may have to take a linear combination of a new point in $A(L)$ and the generator in $A(\QQ)$ to assure that the trace generates $A(\QQ)$.) In fact, in our case, we can check that $A(F) = \ZZ[G] Q$ by using the bound given in~\cite{cps}.

Using modular symbols for the character $\epsilon$ and a Heegner point computation for $\eins$, we can prove that the formulae~\eqref{Qhat_eq} evaluate to

\begin{equation*}
  \hat{\mathcal{Q}}_{\eins} = \frac{L'(A,1)}{\Omega^+(A)\cdot H_{\eins}} = 1
     \qquad\text{ and }\qquad
  \hat{\mathcal{Q}}_{\epsilon} = \frac{L(A,\epsilon,1)\cdot \sqrt{d_K}}{\Omega^+(A)} = 4
\end{equation*}
and hence conclude that BSD holds for $A_{/\QQ}$ and $A_{/K}$ with $\sha(A_{\QQ}) = \sha(A_{K}) = 0$. Next, we compute a numerical approximation to $L'(A,\psi,1)$ for a $2$-dimensional representation $\psi\in\Ir(G)$.
The corresponding value of $\hat{\mathcal{Q}}_{\psi}$ is equal to $4.0000000000000$ for all such $\psi$, but we know of no means of proving that this value is indeed algebraic and equal to the value $4$. It predicts with good accuracy that BSD for $A_{/L}$ would imply that $\sha(A_{L})$ and $\sha(A_{F})$ are trivial. However assuming that $\hat{\mathcal{Q}}_{\psi} = 4$, we compute  the $S_{\rm r}$-truncated version
\begin{equation*}
  \mathcal{Q}_{\eins} = -\frac{578}{577}\  \text{ and }\ \mathcal{Q}_\epsilon = 4\ \text{ and } \ \mathcal{Q}_{\psi} = -\frac{2312}{577}\ \text{ for all $\dim(\psi) = 2$}
\end{equation*}
and hence find that the congruence~\eqref{prime_cong_eq} holds modulo $p=7$. Note that in this particular case, the values $\hat{\mathcal{Q}}_\psi$ were all in $\QQ$. In other words we have found convincing numerical evidence that a Gross-Zagier formula

\begin{equation*}
  \frac{L'(A_{/K},\chi,1) \sqrt{d_K}}{\Omega^+(A)^2} = \frac{c_{\infty}^2}{c^2} \cdot \frac{1}{\vert P \vert} \bigl\langle T_{\chi}(Q), T_{\check\chi}(Q)\bigr\rangle_F
\end{equation*}
analogous to~\eqref{GZ form} should hold for all $\chi\in\Ir(P)$ because $c_{\infty} = 2$ and $c=1$.

\subsubsection{A quintic example}
As a second example, we consider the curve
$$
A\colon\qquad y^2\ +\ x\, y\ =\ x^3\ -\ 4\,x\ -\ 1
$$
labelled 21a1 in~\cite{cremona}. It has rank $0$ over $\QQ$, but rank $1$ over $K=\QQ(i)$ and the group $A(K)$ is generated by the point
$$
R = \bigl( \tfrac{3}{2},\ \tfrac{-3+7\,i}{4}  \bigr).
$$
Now we consider the extension $F/K$ given by a solution $\xi$ of the polynomial
$$
 x^5 - 2\, x^4 - 6\, x^3 +10\, x^2 +17\, x - 12
$$
The extension $F/K$ is only ramified at the place $19\, \ZZ[i]$. All of our hypotheses except~\ref{hyp_g} can be verified to hold in this example.

By a simple search for points, we find the point
$$
T = \Bigl( \tfrac{1}{8} \bigl( -\xi^4 -\xi^2 -12\,\xi-8\bigr),\ \tfrac{1}{32}\big( 39\xi^4 - 7\,\xi^3 -213\xi^2 -127\,\xi+196\bigr)\Bigr)
$$
of infinite order defined over $L=\QQ(\xi)$. The bounds in~\cite{cps} can then be used to prove that
$ \ZZ \, T \oplus \ZZ\, (\sigma + \sigma^{-1}) T$ generates $A(L)_{\rm tf}$ where $\sigma$ is a generator of $P$.
To find the point $Q$ is a bit more elaborate than in the previous example as $\rho_A = \eins$. In fact the $\ZZ[G]$-module generated by $T$ and $R$ will have index
$p$ in $A(F)$ because \cite[Corollary 2.5]{Selmerstr} tells us that $A(F)_p$ is projective. We are going to use the relation
$$
 p\cdot \sigma^{(p+1)/2} = \Tr_{F/K} \ + \ (\sigma - 1) \sum_{i=1}^{(p-1)/2} i\, \bigl(\sigma^i - \sigma^{-i}\bigr)
$$
in $\ZZ[G]$ which is reminiscent of Kolyvagin's derivative construction. We now try to find a point $T' = a\,T+b\, (\sigma+\sigma^{-1})T$ in $A(L)$ with $0\leq a,b < p$
such that $O\neq Q' = R +(\sigma - 1)(T')$ is divisible by $p$ in $A(F)$. Then we can take $Q$ such that $p\sigma^{(p+1)/2}(Q) = Q'$ to be the point predicted by
Proposition~\ref{Q_prop}. In our concrete case this works with $(a,b) = (4,3)$. It can be shown that
$A(F) = {}^{\ZZ}\!/\!{}_{2\ZZ} \oplus {}^{\ZZ}\!/\!{}_{4\ZZ} \oplus \ZZ[G] Q$ as a $\ZZ[G]$-module.

Using modular symbols and Heegner points, we can provably compute that
\begin{equation*}
 \hat{\mathcal{Q}}_{\eins} = \frac{1}{4} \qquad\text{ and }\qquad \hat{\mathcal{Q}}_{\epsilon} = \frac{1}{2}
\end{equation*}
and hence deduce that BSD holds for $A_{/\QQ}$ and $A_{/K}$ with trivial Tate-Shafarevich groups in both cases.
We compute to a high precision the derivatives of $L(A,\psi,s)$ for the representations $\psi=\psi_1$ and $\psi_2$ of dimension two and we find, with an error less than
$10^{-28}$, that $\hat{\mathcal{Q}}_{\psi_1}\cdot \hat{\mathcal{Q}}_{\psi_2} \approx 256$, predicting that $|\sha(A_L)| = 4$ and
$\hat{\mathcal{Q}}_{\psi_1} + \hat{\mathcal{Q}}_{\psi_2} \approx 48$.
It also predicts that $\sha(A_{F})$ has order $32$. We will now assume that these are actually equalities and conclude that
$\hat{\mathcal{Q}}_{\psi_i} = 8\cdot \bigl(3\pm\sqrt{5}\bigr)$. One then computes the $S_{\rm r}$-truncated values to be
\begin{equation*}
  \mathcal{Q}_{\eins} = \frac{8}{19},\qquad \mathcal{Q}_{\epsilon} = \frac{24}{19} \qquad \text{ and }\qquad \mathcal{Q}_{\psi_1} + \mathcal{Q}_{\psi_2} = -96
\end{equation*}
and this shows that the congruence~\eqref{prime_cong_eq} holds modulo $p=5$.


\bibliographystyle{amsplain}
\bibliography{mwc}

\end{document}